\theoremstyle{definition}
\newtheorem{theorem}{Theorem}[section]
\newtheorem{definition}[theorem]{Definition}
\newtheorem{lemma}[theorem]{Lemma}
\newtheorem{proposition}[theorem]{Proposition}
\newtheorem{corollary}[theorem]{Corollary}
\newtheorem{conjecture}[theorem]{Conjecture}
\newtheorem{remark}[theorem]{Remark}
\newtheorem{convention}[theorem]{Convention}
\definecolor{bettergreen}{RGB}{50, 170, 50}
\definecolor{betterpurple}{RGB}{166, 105, 199}
\newcommand{\D}{\mathbb{D}}
\newcommand{\ZZ}{\mathbb{Z}}
\newcommand{\ocalS}{\overline{X}} 
\newcommand{\calS}{X} 
\newcommand{\ocalCn}{\overline{\mathcal{C}}_n}
\newcommand{\calCn}{\mathcal{C}_n}
\newcommand{\oT}{\overline{T}}
\newcommand{\Chev}{\mathrm{Chev}}
\title{Multitriangulations on the half-cylinder}
\date{}
\author[Solotko]{Saskia Solotko}
\address{
   Department of Mathematics, Tufts University, Medford, MA
}
\email{saskia.solotko@tufts.edu}
\author[Tung]{Katherine Tung}
\address{
   Department of Mathematics, Harvard University, Cambridge, MA
}
\email{katherinetung@college.harvard.edu}
\author[Yang]{Mengyuan Yang}
\address{
   Department of Mathematics, Swarthmore College, Swarthmore, PA
}
\email{myang7@swarthmore.edu}
\author[Zhang]{Yuchong Zhang}
\address{
   Department of Mathematics, University of Michigan, Ann Arbor, MI
}
\email{zongxun@umich.edu}
\begin{document}

\maketitle

\begin{abstract}

We prove that the simplicial complex $\Delta _{\calCn,2}$ is pure and a weak pseudomanifold of dimension $2(n-1)$, where $\Delta _{\calCn,2}$ is the simplicial complex associated with $2$-triangulations on the half-cylinder with $n$ marked points. This result generalizes the work of Vincent Pilaud and Francisco Santos for polygons and resolves a conjecture of Mathias Lepoutre and Vincent Pilaud for $k=2$. To achieve this, we show that $2$-triangulations on the half-cylinder decompose as complexes of star polygons, and that $2$-triangulations on the half-cylinder are in bijection with $2$-triangulations on the $4n$-gon invariant under rotation by $\pi/2$ radians. Building on work by Vincent Pilaud and Christian Stump, we also introduce chevron pipe dreams, a new combinatorial model that more naturally captures the symmetries of $k$-triangulations.

\end{abstract}


\section{Introduction}\label{sec:intro}

A $k$-triangulation, or multitriangulation of order $k$, on the convex $n$-gon is a maximal set of edges (namely, internal diagonals) such that no $k+1$ of them mutually intersect inside the polygon. See \Cref{3periodic2tri} for an example of a $2$-triangulation on the $12$-gon. When $k=1$, a $1$-triangulation corresponds to the usual notion of an ideal triangulation.

 \begin{figure}[H]

\begin{tikzpicture}[auto=center,scale=2.7]

    \draw[line width=0.5mm,spink] (0.5, 0.866) -- (0, -1);
    \draw[line width=0.5mm,spink] (0, 1) -- (-0.5, -0.866);
    \draw[line width=0.5mm,spink] (1, 0) -- (-0.866, 0.5);
    \draw[line width=0.5mm,spink] (-1, 0) -- (0.866, -0.5) ;

    \draw[line width=0.5mm,syellow] (1, 0) -- (-0.5, 0.866);
    \draw[line width=0.5mm,syellow] (0, 1) -- (-0.866, -0.5);
    \draw[line width=0.5mm,syellow] (-1, 0) -- (0.5, -0.866);
    \draw[line width=0.5mm,syellow] (0, -1) -- (0.866, 0.5);

    \draw[line width=0.5mm,sorange] (0, 1) -- (0, -1);
    \draw[line width=0.5mm,sorange] (1, 0) -- (-1, 0);

    \draw[line width=0.5mm,steal] (0, 1) -- (1, 0);
    \draw[line width=0.5mm,steal] (1, 0) -- (0, -1);
    \draw[line width=0.5mm,steal] (0, -1) -- (-1, 0);
    \draw[line width=0.5mm,steal] (-1, 0) -- (0, 1);

    \draw[line width=0.5mm,sgrey] (0.866, 0.5) -- (0.866, -0.5);
    \draw[line width=0.5mm,sgrey] (-0.5, 0.866) -- (0.5, 0.866);
    \draw[line width=0.5mm,sgrey] (-0.866, 0.5) -- (-0.866, -0.5);
    \draw[line width=0.5mm,sgrey] (-0.5, -0.866) -- (0.5, -0.866);

    \draw[line width=0.5mm,sgrey] (0.5, 0.866) -- (1, 0);
    \draw[line width=0.5mm,sgrey] (-0.866, 0.5) -- (0, 1);
    \draw[line width=0.5mm,sgrey] (-0.5, -0.866) -- (-1, 0);
    \draw[line width=0.5mm,sgrey] (0, -1) -- (0.866, -0.5);

    \draw[line width=0.5mm,sgrey] (0, 1) -- (0.866, 0.5);
    \draw[line width=0.5mm,sgrey] (-1, 0) -- (-0.5, 0.866);
    \draw[line width=0.5mm,sgrey] (0, -1) -- (-0.866, -0.5);
    \draw[line width=0.5mm,sgrey] (0.5, -0.866) -- (1, 0);

    \draw[line width=0.5mm,sgrey] (0.866, 0.5) -- (0.5, 0.866);

    \draw[line width=0.5mm,sgrey] (0.5, 0.866) -- (0, 1);

    \draw[line width=0.5mm,sgrey] (0, 1) -- (-0.5, 0.866);

    \draw[line width=0.5mm,sgrey] (-0.5, 0.866) -- (-0.866, 0.5);

    \draw[line width=0.5mm,sgrey] (-0.866, 0.5) -- (-1, 0);

    \draw[line width=0.5mm,sgrey] (-1, 0) -- (-0.866, -0.5);

    \draw[line width=0.5mm,sgrey] (-0.866, -0.5) -- (-0.5, -0.866);

    \draw[line width=0.5mm,sgrey] (-0.5, -0.866) -- (0, -1);

    \draw[line width=0.5mm,sgrey] (0, -1) -- (0.5, -0.866);

    \draw[line width=0.5mm,sgrey] (0.5, -0.866) -- (0.866, -0.5);

    \draw[line width=0.5mm,sgrey] (0.866, -0.5) -- (1, 0);

    \draw[line width=0.5mm,sgrey] (1, 0) -- (0.866, 0.5);

    \draw[thick,red] (0.866, 0.5) node  {$\bullet$};
    \draw[thick,Green] (0.5, 0.866) node {$\bullet$};
    \draw[thick,blue] (0, 1) node {$\bullet$};
    \draw[thick,red] (-0.5, 0.866) node {$\bullet$};
    \draw[thick,Green] (-0.866, 0.5) node {$\bullet$};
    \draw[thick,blue] (-1, 0) node {$\bullet$};
    \draw[thick,red] (-0.866, -0.5) node {$\bullet$};
    \draw[thick,Green] (-0.5, -0.866) node {$\bullet$};
    \draw[thick,blue] (0, -1) node {$\bullet$};
    \draw[thick,red] (0.5, -0.866) node {$\bullet$};
    \draw[thick,Green] (0.866, -0.5) node {$\bullet$};
    \draw[thick,blue] (1, 0) node {$\bullet$};
\end{tikzpicture}

\caption{A 
$2$-triangulation on the $12$-gon invariant under rotation by $\pi/2$ radians.  \label{3periodic2tri}}
\end{figure}
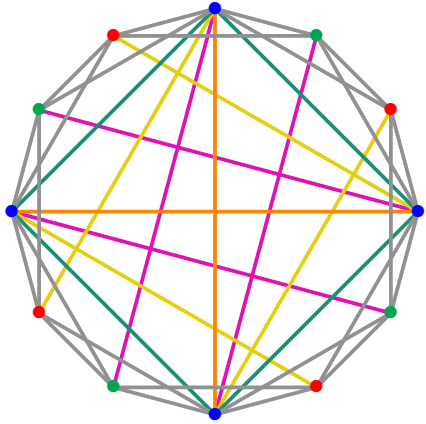

Jakob Jonsson, Vincent Pilaud, Francisco Santos, Christian Stump, and many other authors have proved various structural results about $k$-triangulations on polygons:

\begin{enumerate}
    \item Each $k$-triangulation $T$ on the $n$-gon can be regarded as a simplicial complex $\delta^T_{n,k}$, where each facet is a $k$-star \cite{MR2721464}. 

    \item All $k$-triangulations on the $n$-gon have the same number of edges, which is equal to $k(2n-2k-1)$ \cite{CP92, NAKAMIGAWA2000271, DKM00}. 

    \item Every $k$-relevant edge $e$ of a $k$-triangulation $T$ can be ``flipped" to a unique edge $f$ such that $(T\setminus \{e\})\cup \{f\}$ is a $k$-triangulation \cite{NAKAMIGAWA2000271, MR2721464}. 

    \item There is a bijection between $k$-triangulations on the $n$-gon and reduced pipe dreams for the permutation $\pi_{n,k}=[1,\dots, k,n-k,n-k-1,\dots, k+1]$ \cite{PilaudThesis, STUMP20111794, PP12}.

    \item The simplicial complex $\Delta_{n,k}$, whose vertices are the $k$-relevant edges of the $n$-gon and whose facets correspond to $k$-triangulations, is a vertex-decomposable sphere \cite{Jonssonfake}.

\end{enumerate}

Further, $k$-triangulations on polygons admit many natural interpretations, such as

\begin{enumerate}
    \item $k$-compatible split systems \cite{DKKM01},
    \item pseudoline arrangements on the Möbius strip \cite{PilaudThesis,PP12}, 
    \item multi-cluster complexes of type A \cite{CeballosLabbeStump}, and
    \item bases of the Pfaffian variety $\mathcal{P}f_k(n)$ of antisymmetric matrices of rank $\le 2k$ \cite{CS24}.
\end{enumerate}

In this paper, we generalize the notion of $k$-triangulations on polygons to $k$-triangulations on surfaces with marked points.

\begin{definition}[$k$-triangulations]
    Let $\calS$ be a surface, let $\overline{\calS}$ be its universal cover, and let $\pi: \overline{\calS} \to 
    \calS$ be an associated covering map. A \emph{(weak) $k$-triangulation} on $\calS$ is a maximal subset of edges $e$ so that the set of preimages $\pi^{-1}(e) \in \overline{\calS}$ does not contain any \emph{$(k+1)$-crossings} (namely, no $k+1$ of them mutually intersect inside the polygon).
\end{definition}

\Cref{fig:univ-cover} is an example of a $2$-triangulation on the half-cylinder with $3$ marked points.

\begin{figure}[H]
    \centering 
    \includegraphics[width=0.8\linewidth]{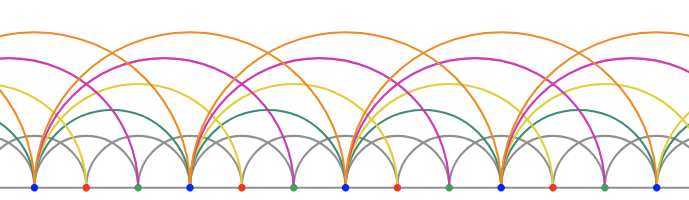}
    \caption{A $2$-triangulation on the half-cylinder with $3$ marked points, lifted to the universal cover.
    \label{fig:univ-cover}}
    
\end{figure}

Our main result is the following.
\begin{theorem}\label{thm:main}
    The flip graph of $2$-triangulations on $\calCn$, the half-cylinder with $n$ marked points, is regular. Equivalently, the associated simplicial complex $\Delta_{\calCn,2}$ is pure and a weak pseudomanifold. 
\end{theorem}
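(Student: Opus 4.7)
The plan is to reduce to the polygon case via the bijection (stated earlier) between $2$-triangulations on $\calCn$ and $\pi/2$-rotation-invariant $2$-triangulations of the $4n$-gon. Write $\rho$ for the $\pi/2$-rotation, generating a $\mathbb{Z}/4$-action; then $\Delta_{\calCn,2}$ is naturally identified with the $\rho$-equivariant subcomplex of $\Delta_{4n,2}$. Since $\Delta_{4n,2}$ is already known to be pure and a pseudomanifold (indeed, a sphere by Jonsson), my goal is to show that these properties descend to the equivariant subcomplex.

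For purity, I would argue that every $\rho$-invariant $2$-triangulation of the $4n$-gon has the same $\rho$-orbit structure. No edge of the $4n$-gon is $\rho$-fixed, so every orbit has size $2$ or $4$; size-$2$ orbits consist of pairs of diameters (those fixed by $\rho^2$). Since any three diameters meet at the center and form a $3$-crossing, a $2$-triangulation contains at most two diameters. Combined with the parity observation $8n-10 \equiv 2 \pmod{4}$ on the number of $2$-relevant edges, this forces every $\rho$-invariant $2$-triangulation to contain exactly one orbit of diameters together with $2n-3$ orbits of size $4$, for a uniform total of $2n-2$ orbits. Under the bijection, these orbits correspond to the edges of a $2$-triangulation on $\calCn$, so purity follows.

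For the weak pseudomanifold property, the key ingredient is the equivariance of the Pilaud-Santos flip. Given a $\rho$-invariant $2$-triangulation $\widetilde T$ of the $4n$-gon and a $2$-relevant edge $e \in \widetilde T$, let $f$ be its unique flip. Uniqueness together with $\rho \widetilde T = \widetilde T$ forces $\mathrm{flip}_{\widetilde T}(\rho^i e) = \rho^i f$, so the $\rho$-orbit $O_e$ of $e$ canonically determines a flipped orbit $O_f$ of the same size. A ridge of $\Delta_{\calCn,2}$ lifts to $\widetilde T \setminus O_e$, and any $\rho$-invariant completion must add back either $O_e$ (recovering $\widetilde T$) or $O_f$ (giving a candidate neighbor $\widetilde T'$), yielding at most two adjacent facets.

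The main obstacle is verifying that $\widetilde T' := (\widetilde T \setminus O_e) \cup O_f$ is actually a $2$-triangulation: the Pilaud-Santos flip only guarantees this for single-edge replacements, and the simultaneous replacement of $O_e$ by $O_f$ could in principle create a new $3$-crossing. I would address this using the star polygon decomposition stated earlier in the paper. Each $2$-relevant edge is the common diagonal of a unique pair of adjacent $2$-stars (its \emph{bowtie}), and its flip replaces it with the other diagonal of the same bowtie. By $\rho$-equivariance, the bowties along $O_e$ form a $\rho$-orbit of bowties; in the generic case these are pairwise disjoint, so the four flips are independent and compose to a $2$-triangulation. The non-generic case, when two orbit representatives share a star or lie in the same bowtie, is rigidly constrained by the $\mathbb{Z}/4$-symmetry to a small list of explicit local configurations that can be handled by direct inspection. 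Formalizing this bowtie analysis, and in particular controlling the interaction between the unique diameter orbit and the generic size-$4$ orbits, is the principal technical step, and it is here that the star polygon decomposition on $\calCn$ becomes essential.
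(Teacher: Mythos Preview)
Your overall framework---passing through the bijection to $\rho$-invariant $2$-triangulations of the $4n$-gon and then studying equivariant flips---matches the paper's, and your purity argument is essentially the same orbit count as the paper's \Cref{Cnpurity}. The divergence is in how the hard step (existence and uniqueness of the equivariant flip) is carried out: the paper does \emph{not} use a bowtie case analysis but instead introduces \emph{chevron pipe dreams} (Section~\ref{sec:chev}), a cut-and-glue variant of Stump's staircase pipe dreams in which the $\rho$-action becomes a reflection/periodicity, and then argues directly with pipe crossings (Propositions~\ref{flipexist} and~\ref{flipunique}). This replaces your deferred ``non-generic'' bowtie configurations with a uniform combinatorial model.

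There is a genuine gap in your uniqueness argument. You assert that any $\rho$-invariant completion of $\widetilde T \setminus O_e$ must add back either $O_e$ or $O_f$, but the Pilaud--Santos uniqueness you are invoking is a statement about codimension-$1$ faces of $\Delta_{4n,2}$: removing \emph{one} edge leaves exactly two completions. Removing the orbit $O_e$ drops to a codimension-$|O_e|$ face, which a priori lies in many facets of $\Delta_{4n,2}$; $\rho$-invariance cuts this down, but not obviously to two. Concretely, you need to rule out a $\rho$-orbit $O_g$ with $g \notin \{e,f\}$ such that $(\widetilde T \setminus O_e) \cup O_g$ is a $2$-triangulation, and nothing in your outline does this (knowing that $g$ is compatible with $\widetilde T \setminus O_e$ does not force $g$ compatible with $\widetilde T \setminus \{e\}$, since the other edges of $O_e$ are absent). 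This is exactly the content of the paper's Proposition~\ref{flipunique}, which is proved by tracking double crossings in the chevron pipe dream; your bowtie approach would need an independent argument here, and the same ``single star contains two edges of $O_e$'' case that you flag for existence reappears and is not obviously small.
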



\subsection{Amending a previous definition of $k$-triangulations on surfaces.} For readers familiar with the $k$-triangulation literature, we summarize in the next six paragraphs some previous attempts to define $k$-triangulations on surfaces with marked points and explain the advantages of our definition. In 2019, Mathias Lepoutre and Vincent Pilaud proposed generalizing $k$-triangulations on polygons to arbitrary surfaces by studying $k$-triangulations on their universal covers \cite[Part III]{PHDThesis}. Their $k$-triangulations are ``the projections of \textbf{periodic maximal} $(k+1)$-crossing-free edge sets on the universal cover" \cite[Definition~1.1.4~and~Definition~1.2.2~of~Part~III]{PHDThesis}. We call such objects \emph{strong $k$-triangulations}.

In contrast, our $k$-triangulations are ``the projections of \textbf{maximal periodic} $(k+1)$-crossing-free edge sets on the universal cover." We call these objects \emph{weak $k$-triangulations}.\footnote{We conceived of this definition before discovering \cite{PHDThesis}.} 

In \cite[Remark~12~of~Part~III]{PHDThesis}, it is conjectured that on any surface, the notions of strong $k$-triangulation and weak $k$-triangulation are equivalent. However, this conjecture is incorrect. In fact, on many naturally arising surfaces, weak $k$-triangulations exist and strong $k$-triangulations do not. We decided to study weak $k$-triangulations because they have a richer structure than strong $k$-triangulations in the following two ways. 

First, 
weak $1$-triangulations 
always coincide with \emph{ideal triangulations} (maximal collection of non-null-homotopic arcs that are pairwise non-homotopic) whereas strong $1$ triangulations do not. For example, $\calCn$ has ideal triangulations but no strong $1$-triangulations. 

\begin{remark}\label{remark:revisedstrong}
    There is a problem with the proof of \cite[Proposition~1.3.3~of~Part~III]{PHDThesis}, namely that there are no strong $k$-triangulations on the half-cylinder for any $k$. (Though \cite[Figure~1.11~of~Part~III]{PHDThesis} seems to suggest otherwise, the figure does not correctly depict a strong $k$-triangulation, as a single edge connecting $u_0$ and $u_4$ can be added without creating a $4$-crossing). 
\end{remark}

Second, weak $k$-triangulations lend themselves to an underexplored analog of \cite[Theorem~4.1]{MR2721464}, which we call the Star Decomposition Theorem for polygons. 
While the Star Decomposition Theorem for the strong $k$-triangulations on any finite-type surface follows from the corresponding result for infinite tidy polygons (see \cite[Section~1.2~of~Part~III]{PHDThesis}), the Star Decomposition Conjecture for the weak $k$-triangulations is far more subtle. 
In this paper, we prove the Star Decomposition Conjecture for weak $k$-triangulations on the half-cylinder when $k=2$ (\Cref{2starparty}), and we regard this result as the main technical contribution of our work. 

The definition of $k$-triangulations implicitly used by Pilaud is ``the projections of a periodic maximal [$(k+1)$-crossing-free sets on the infinite polygon that contain no edge which, when extended periodically, would create a $(k+1)$-self-crossing on the universal cover]" \cite{vppc}. We further prove that on the half-cylinder, the weak $k$-triangulations are precisely the $k$-triangulations they implicitly used when $k=2$ (\Cref{maximalityfromstars}).

\begin{convention}
    In the rest of the paper, $k$-triangulations refer to weak $k$-triangulations unless otherwise specified.
\end{convention}

This paper is organized as follows. In \Cref{sec:back}, we define necessary terms 
including $k$-triangulations, $k$-stars, angles, and $k$-relevant edges. In \Cref{sec:multi}, we prove a number of properties about multitriangulations on the half-cylinder. 
A main result is \Cref{2starparty}, which serves as a generalization of the Star Decomposition Theorem for $n$-gons (\cite[Theorem~4.1]{MR2721464}). In \Cref{sec:bij}, we give a bijection between $2$-triangulations on the half-cylinder with $n$ marked points and $n$-periodic $k$-triangulations on the $2kn$-gon when $k=2$. 
In \Cref{sec:chev}, we define chevron pipe dreams, a symmetric analogue of \emph{reduced pipe dreams} \cite{BB93, FK93} 
that captures central symmetries of $k$-triangulations through mirror symmetry and periodicity conditions. See \Cref{fig:chevron} for an example.
Finally, in Section \ref{sec:regular}, we prove our main theorem (\Cref{thm:main}) using the aforementioned results.

\begin{figure}[H]
    \centering
    \begin{mosaic}{0.5}
    \& \& \& \& \& \tiF\& \& \& \& \\
    \& \& \& \& \tiF\& \Steal \& \& \& \& \\
    \& \& \& \tiF\& \tiX\& \Syellow\& \& \& \& \\
    \& \& \tiF\& \tiX\& \tiX\& \Spink\& \& \& \& \\
    \& \tiF\& \Steal\& \tiX\& \tiX\& \Sorange\& \Spink\& \Syellow\& \Steal\& \tiJ \\
    \tiF\& \tiX\& \Syellow\& \tiX\& \tiX\& \tiX\& \tiX\& \tiX\& \tiJ \\
    \tiX\& \tiX\& \Spink\& \tiX\& \tiX\& \tiX\& \tiX\& \tiJ \\
    \tiX\& \tiX\& \Sorange\& \Spink\& \Syellow\& \Steal\& \tiJ \\
    \tiX\& \tiX\& \tiX\& \tiX\& \tiX\& \tiJ \\
    \tiX\& \tiX\& \tiX\& \tiX\& \tiJ \\
\end{mosaic}
    \caption{\label{fig:cpd2-flipped} The chevron pipe dream corresponding to the $2$-triangulation from \Cref{3periodic2tri}. Chevron pipe dreams are defined in Definition \ref{chevronconstruction}.}\label{fig:chevron}
    
\end{figure}

\section{Background}\label{sec:back}

Generalizing the $k$-triangulations on a convex $n$-gon, we define $k$-triangulations on the half-cylinder with $n$ marked points. Let $\calCn$ denote the annulus with $n$ marked points $\alpha_{[0]},\dots,\alpha_{[n-1]}$ on one of its boundary components, where subscripts are congruence classes modulo $n.$ We refer to $\calCn$ as a \emph{half-cylinder} or an \emph{$(n,0)$-annulus}.

The universal cover of $\calCn,$ denoted $\ocalCn,$ has vertices $\{\alpha_i\mid i\in \ZZ\}.$ Taking any vertex $v=\alpha_{[i]}\in \calCn,$ we define $v+j\coloneq \alpha_{[i+j]}$ for any integer $j.$ An \emph{edge} $e$ on $\ocalCn$ is defined by the isotopy class of topological curves connecting $u,v$ and is denoted by $[u,v]$ or $[v,u].$ Similarly, we define $e+n\coloneq [u+n,v+n].$ The covering map $\pi:\ocalCn\rightarrow \calCn,$ applied to vertices and edges is given by $\pi(\alpha_{[i]})=\alpha_i$ and $\pi([\alpha_{[i]},\alpha_{[j]}])=[\alpha_i,\alpha_j].$ We say an edge $e$ on $\ocalCn$ is a \emph{translation} of an edge $f$ if $e=f+\ell n$ for some $\ell\in \ZZ,$ or equivalently if $\pi(e)=\pi(f).$ Typically, we do not refer to vertices of $\calCn$ or $\ocalCn$ using these subscripts determined by their position.

There is a natural order of edges on $\ocalCn$ where $u<v$ if and only if $u=v+\ell$ for some $\ell\in \ZZ^{+}$ induced from a counterclockwise orientation on $\ocalCn.$ When considering a subset of vertices of $\ocalCn$, we additionally have a \emph{cyclic order} $\prec$ given by the counterclockwise cyclic order on the first polygon, as defined in \cite{PHDThesis}. On $\ocalCn,$ two edges $e_1$ and $e_2$ are said to \emph{intersect} if all representatives of the isotopy classes of $e_1$ and $e_2$ intersect, or equivalently, $a_1\prec b_1 \prec a_2 \prec b_2$ for some labeling of vertices $e_1=[a_1,b_1],e_2=[a_2,b_2].$ A \emph{$k$-crossing} on $\ocalCn$ is a set of edges $E=\{e_1,e_2,\dots, e_k\}$ that pairwise intersect.

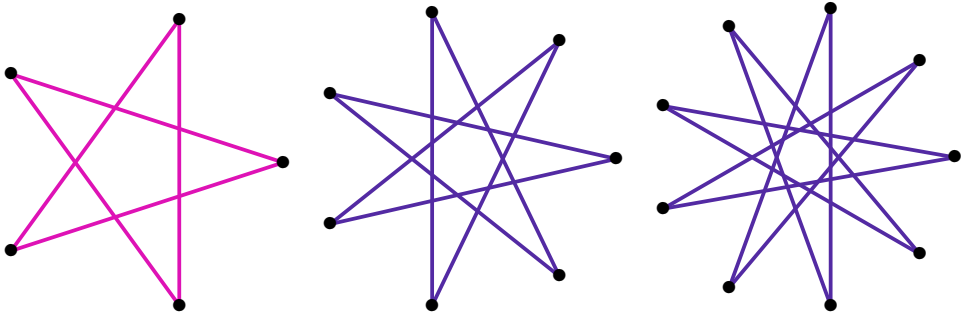
\begin{figure}[H]
\begin{tikzpicture}[scale=2]
    \draw[line width=0.5mm,spink] (0.309, 0.951) -- (0.309, -0.951);
    \draw[line width=0.5mm,spink] (0.309, 0.951) -- (-0.809, -0.588);
    \draw[line width=0.5mm,spink] (-0.809, -0.588) -- (1, 0);
    \draw[line width=0.5mm,spink] (1, 0) -- (-0.809, 0.588);
    \draw[line width=0.5mm,spink] (-0.809, 0.588) -- (0.309, -0.951);
    
    \draw[thick] (0.309, 0.951) node  {$\bullet$};
    \draw[thick] (-0.809, 0.588) node {$\bullet$};
    \draw[thick] (-0.809, -0.588) node {$\bullet$};
    \draw[thick] (0.309, -0.951) node {$\bullet$};
    \draw[thick] (1, 0) node {$\bullet$};
\end{tikzpicture} \begin{tikzpicture}[scale=2]
    \draw[line width=0.5mm,spurple] (0.623, 0.782) -- (-0.901, -0.434);
    \draw[line width=0.5mm,spurple] (-0.901, -0.434) -- (1, 0);
    \draw[line width=0.5mm,spurple] (1, 0) -- (-0.901, 0.434);
    \draw[line width=0.5mm,spurple] (-0.901, 0.434) -- (0.623, -0.781);
    \draw[line width=0.5mm,spurple] (0.623, -0.781) -- (-0.223, 0.974);
    \draw[line width=0.5mm,spurple] (-0.223, 0.974) -- (-0.223, -0.975);
    \draw[line width=0.5mm,spurple] (-0.223, -0.975) -- (0.623, 0.782);
    
    \draw[thick] (0.623, 0.782) node  {$\bullet$};
    \draw[thick] (-0.223, 0.974) node  {$\bullet$};
    \draw[thick] (-0.901, 0.434) node  {$\bullet$};
    \draw[thick] (-0.901, -0.434) node  {$\bullet$};
    \draw[thick] (-0.223, -0.975) node  {$\bullet$};
    \draw[thick] (0.623, -0.781) node  {$\bullet$};
    \draw[thick] (1, 0) node  {$\bullet$};
\end{tikzpicture} \begin{tikzpicture}[scale=2]
    \draw[line width=0.5mm,spurple] (0.766, 0.643) -- (-0.940, -0.342);
    \draw[line width=0.5mm,spurple] (-0.940, -0.342) -- (1, 0);
    \draw[line width=0.5mm,spurple] (1, 0) -- (-0.940, 0.342);
    \draw[line width=0.5mm,spurple] (-0.940, 0.342) -- (0.766, -0.643);
    \draw[line width=0.5mm,spurple] (0.766, -0.643) -- (-0.5, 0.866);
    \draw[line width=0.5mm,spurple] (-0.5, 0.866) -- (0.174, -0.985);
    \draw[line width=0.5mm,spurple] (0.174, -0.985) -- (0.174, 0.985);
    \draw[line width=0.5mm,spurple] (0.174, 0.985) -- (-0.5, -0.866);
    \draw[line width=0.5mm,spurple] (-0.5, -0.866) -- (0.766, 0.643);
    
    \draw[thick] (0.766, 0.643) node  {$\bullet$};
    \draw[thick] (0.174, 0.985) node  {$\bullet$};
    \draw[thick] (-0.5, 0.866) node  {$\bullet$};
    \draw[thick] (-0.940, 0.342) node  {$\bullet$};
    \draw[thick] (-0.940, -0.342) node  {$\bullet$};
    \draw[thick] (-0.5, -0.866) node  {$\bullet$};
    \draw[thick] (0.174, -0.985) node  {$\bullet$};
    \draw[thick] (0.766, -0.643) node  {$\bullet$};
    \draw[thick] (1, 0) node  {$\bullet$};

\end{tikzpicture}
\caption{A $2$-star in a pentagon, a $3$-star in a heptagon, and a $4$-star in a nonagon.\label{fig:kstar}}
\end{figure}

Take a subset $\{z_0,\dots, z_{2k}\}$ of vertices on $\ocalCn$ ordered such that
\[
z_0\prec z_1 \prec\cdots\prec z_{2k}.
\]
A \emph{$k$-star polygon}, or a \emph{$k$-star} $S$ on this subset, as defined in \cite{MR2721464} and \cite{PHDThesis}, contains all vertices $z_i$ and edges $[z_i,z_{i+k}]$ on $\ocalCn$ with subscripts reduced modulo $n.$ 
We use these definitions of $k$-crossings and $k$-stars on $\ocalCn$ to define analogously $k$-crossings and $k$-stars on $\calCn.$

As a generalization of the $k$-stars and the $k$-triangulations on $n$-gons, we introduce the following definitions.

\begin{definition}
A \emph{$k$-star} on $\calCn$ is the projection $\pi(S)$ of a $k$-star $S$ on $\ocalCn.$
\end{definition}

\begin{definition}
A \emph{k-triangulation} $T$ on $\calCn$ is a maximal set of edges such that the lifting $\pi^{-1}(T)$ is $(k+1)$-crossing-free. 
\end{definition}


We additionally define a \emph{$k$-triangulation} $\oT$ on $\ocalCn$ to be a maximal set of \emph{$n$-periodic} edges on $\ocalCn,$ that is, $v\in \oT \Longleftrightarrow v+n\in \oT.$ Equivalently, $\pi(\oT)$ is a $k$-triangulation on $\calCn$.

For vertices $u,v\in \ocalCn,$ we say $v-u=j$ if $u+j=v.$ The \emph{length} of an edge $[u,v]\in \ocalCn$ is defined by $|v-u|.$ We also say $\pi([v,u])$ has \emph{length} $|v-u|.$ 

An edge $[u,v]$ of $\ocalCn$ can appear in a $(k+1)$-crossing only if $|v-u|>k.$ Thus, each $k$-triangulation $T$ (respectively, $\oT$) contains all edges of length $\leq k.$ For simplicity, we adopt the following convention.

\begin{convention}\label{convention:exclude_short_edge}
    In this paper, when discussing a $k$-triangulation $T$ on $\calCn$ (respectively, $\oT$ on $\ocalCn$), we exclude all the edges of length $<k$.
\end{convention}

As we show in \Cref{length>kn} later, a $k$-triangulation $T$ (respectively, $\oT$) contains no edges of length $>kn.$  
These observations inform us to introduce the following definitions, analogous to those in \cite{PHDThesis} and \cite{MR2721464}.

\begin{definition}
An edge $[u,v]\in \oT$ is a \emph{$k$-relevant} edge if $k<|v-u|\leq kn.$ 
\end{definition}

\begin{definition}
An edge $[u,v]\in \oT$ is a \emph{$k$-boundary} edge if $|v-u|=k.$ 
\end{definition}

\begin{definition}

An \emph{angle} of a $k$-triangulation $\oT$ on $\ocalCn$ is given by a pair of edges $[u,v],[v,w]\in \oT$ such that $u \prec v \prec w$ and there exists no edge $[v,w_0]$ in $\oT$ with $w_0 \prec u \prec v \prec w.$ Such an angle is denoted by $\angle(u,v,w).$ An angle $\angle(u,v,w)$ of $\oT$ is called \emph{$k$-relevant} if at least one of $[u,v]$ and $[v,w]$ is $k$-relevant with length $<kn$. 
\end{definition}

According to this definition, every $k$-relevant angle contains either two $k$-relevant edges or a $k$-relevant edge and a $k$-boundary edge.

\begin{definition}
    Given an angle $\angle(u,v,w)$ of $\oT$. An edge $[v,w_0]$ with $w_0 \prec u \prec v \prec w$ is called an \emph{angle bisector} of $\angle(u,v,w)$; or equivalently, we say $[v,w_0]$ \emph{bisects} the angle $\angle(u,v,w)$. 
    
\end{definition} 

An edge $[v,w_0]$ might also be an angle bisector of more than one angle, or an angle bisector of an angle contained in a collection of edges; in these cases we abuse terminology and just call $[v,w_0]$ an angle bisector.

\section{Multitriangulations on the half-cylinder}\label{sec:multi}

In this section, we prove a series of properties about multitriangulations on the half-cylinder. The main result is \Cref{2starparty}, which generalizes the Star Decomposition Theorem for $n$-gons (\cite[Theorem~4.1]{MR2721464}) to $2$-triangulations on the half-cylinder with $n$ marked points.

In the following lemmas, let $T$ denote a $k$-triangulation on $\calCn$, which lifts to $\pi^{-1}(T)=\oT$ on $\ocalCn$.

\begin{lemma}\label{length>kn}
    A $k$-triangulation $T$ of $\calCn$ contains no edge of length $\ell > kn$.
\end{lemma}

\begin{proof}
    Suppose, for contradiction, that $T$ contains an edge $e = [v, v+\ell]$ with $\ell > kn$. Consider its preimage in $\ocalCn$. Then the edges
    \[
        e,\ e+n,\ e+2n,\ \dots,\ e+kn
    \]
    all appear in $\oT$, and their endpoints satisfy
    \[
        v < v+n < \dots < v+kn < v+\ell < v+\ell+n < \dots < v+\ell+kn,
    \]
    so these $k+1$ edges form a $(k+1)$-crossing. This contradicts the assumption that $\oT$ is $(k+1)$-crossing-free.
\end{proof}

\begin{lemma}\label{e!knedge}
    Every $k$-triangulation $T$ of $\calCn$ contains exactly one edge of length $kn$.
\end{lemma}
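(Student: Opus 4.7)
The claim has two parts: uniqueness (at most one such edge) and existence (at least one). My plan is to handle each separately, with uniqueness following the same translation template as \Cref{length>kn} and existence requiring a maximality argument that is considerably more delicate.

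\textbf{Uniqueness.} I would argue by producing a forbidden $(k+1)$-crossing. Suppose for contradiction that $T$ contains two distinct length-$kn$ edges, lifting to $\oT$ as $e = [v, v+kn]$ and $e' = [u, u+kn]$ with $v < u < v+n$ (possible because they are distinct on $\calCn$). By $n$-periodicity, $\oT$ contains all translates $e_j = [v+jn, v+(k+j)n]$. For $0 \leq j_1 < j_2 \leq k-1$ the relation $j_2 - j_1 \leq k-1 < k$ gives the interleaving $v + j_1 n < v + j_2 n < v + (k+j_1)n < v + (k+j_2)n$, so the $k$ edges $e_0, \dots, e_{k-1}$ pairwise cross. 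A short check using $0 < u - v < n$ shows $e'$ crosses each $e_j$: for $j = 0$, one has $v < u < v+kn < u+kn$, and for $j \geq 1$, $u < v+jn < u+kn < v+(k+j)n$. Hence $\{e', e_0, \dots, e_{k-1}\}$ is a $(k+1)$-crossing in $\oT$, contradicting $(k+1)$-crossing-freeness.

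\textbf{Existence.} My plan is a contradiction via maximality: assuming $\oT$ contains no length-$kn$ edge, I exhibit an edge $f = [v, v+kn]$ whose $n$-periodic orbit, when adjoined to $\oT$, yields no new $(k+1)$-crossing. Any would-be new crossing must use at least one translate of $f$ (otherwise it already lived in $\oT$), and by exactly the calculation above, at most $k$ such translates can pairwise cross. Therefore the crossing has the shape $\{f+j_1 n, \dots, f+j_r n, g_1, \dots, g_{k+1-r}\}$ with $1 \leq r \leq k$ and $g_i \in \oT$, each $g_i$ crossing all $r$ translates of $f$ and all other $g_\ell$. Analyzing the interleaving conditions forces each $g_i = [a_i, b_i]$ into a narrow window set by the translate endpoints $v + j_s n$ and $v + (k+j_s) n$; combining this with the length bound $\ell(g_i) \leq kn$ from the previous lemma, together with our assumption that no $\oT$-edge has length exactly $kn$, further squeezes the admissible position of $g_i$.

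\textbf{Main obstacle.} The crux is selecting $v$ adaptively so that no edge of $\oT$ actually fits all these constraints. A uniform choice like $v = \alpha_0$ could be obstructed by a long $\oT$-edge sitting in the critical window, so I expect to choose $v$ in a way tied to $\oT$ --- for instance as the starting vertex of a longest edge of $\oT$, as a vertex where the angular sector in the puncture direction is maximally wide, or by a pigeonhole over $v \in \{\alpha_0, \dots, \alpha_{n-1}\}$ guaranteeing at least one viable candidate. The tightest subcase is $r = k$, where the translates span roughly $(2k-1)n$ consecutive vertices and the obstructing $g_i$ must have length strictly between $(k-1)n$ and $kn$, living in a window of width a bounded fraction of $n$; ruling out such $g_i$ for a suitably chosen $v$ is the technical heart of the argument.
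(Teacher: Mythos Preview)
Your uniqueness argument is correct and is exactly the paper's: the $k$ translates $e_0,\dots,e_{k-1}$ of one long edge together with the other long edge form a $(k+1)$-crossing.

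For existence, however, what you have written is an outline rather than a proof, and the missing step is not a routine computation. You correctly set up the contradiction (add the orbit of some $f=[v,v+kn]$ and study a hypothetical $(k+1)$-crossing using $r$ translates of $f$), and you even list the right choice of $v$ among your candidates (the left endpoint of a \emph{longest} edge $e=[v,v+\ell]$ of $\oT$). But your plan to ``squeeze the admissible position of $g_i$'' by window/length estimates, treated case-by-case in $r$, is not carried out, and it is not clear it closes: for intermediate values of $r$ the constraints on the $g_i$ do not obviously collapse to a contradiction just from $\ell(g_i)<kn$.

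The paper avoids this case analysis entirely with a single idea you are missing: having fixed $e$ of maximal length and $f=[v,v+kn]$ sharing its left endpoint, take a $(k+1)$-crossing using the \emph{minimum} number $j\ge 1$ of translates of $f$, normalize so that no edge of the crossing has right endpoint beyond that of the rightmost translate $f+a_{j-1}n$ (shift any offending edge by $-kn$), and then \emph{replace} $f+a_{j-1}n$ by $e+a_{j-1}n$. Any edge $g$ of the crossing that fails to cross $e+a_{j-1}n$ is either longer than $e$ (impossible by maximality) or has its right endpoint beyond the normalized bound (impossible by the normalization). Thus one obtains a $(k+1)$-crossing with only $j-1$ translates of $f$, contradicting minimality of $j$. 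This replacement-plus-minimality trick is the actual engine of the proof; once you commit to $v$ being the endpoint of a longest edge, it handles all $r$ simultaneously and should replace your window analysis.
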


\begin{proof}
    To prove existence, suppose, for contradiction, that $T$ contains no edge of length $kn$. Let $e = [v, v+\ell]$ be an edge in $\oT$ of maximal length, and let $f = [v, v+kn]$. We claim that the set of edges $\oT \cup \{f + tn \mid t \in \mathbb{Z}\}$
    is $(k+1)$-crossing-free.

    Suppose not. Let $j \geq 1$ be the minimal integer such that there exists a $(k+1)$-crossing in this set involving exactly $j$ distinct translations of $f$ (including $f$ itself). Denote these as
    \[
        f, f + a_1n, f + a_2n, \dots, f + a_{j-1}n,
    \]
    where $0 < a_1 < \dots < a_{j-1}$. Let $E$ be the corresponding $(k+1)$-crossing.

    Without loss of generality, we may assume that no edge in $E$ has its right endpoint strictly to the right of $f + a_{j-1}n + kn$. Otherwise, for each such edge $h \in E$, replacing $h$ with $h - kn$ yields another $(k+1)$-crossing (still involving $j$ or fewer translations of $f$), and we may instead consider this new crossing.

    Now, observe that replacing $f + a_{j-1}n$ with $e + a_{j-1}n$ yields another $(k+1)$-crossing in $\oT \cup \{f + tn \mid t \in \mathbb{Z}\}$. If this were not the case, then there would exist an edge $g = [a, b] \in E$ intersecting $f + a_{j-1}n$ but not intersecting $e + a_{j-1}n$. We consider two cases:

    Case 1: $a < v + a_{j-1}n < v + a_{j-1}n + \ell < b < v + a_{j-1}n + kn$.  
      This contradicts the assumption that $e$ is the longest edge in $\oT$.

    Case 2: $v + a_{j-1}n + \ell < a < v + a_{j-1}n + kn < b$.  
      This contradicts our assumption that no edge in $E$ has its right endpoint beyond $f + a_{j-1}n + kn$.

    Therefore, replacing $f + a_{j-1}n$ with $e + a_{j-1}n$ gives another $(k+1)$-crossing involving only $j-1$ translations of $f$, contradicting the minimality of $j$. This proves the existence of an edge of length $kn$ in $T$.

    \medskip

    To prove uniqueness, suppose $T$ contains two distinct edges of length $kn$. Then $\oT$ contains edges $e = [v, v+kn]$ and $f = [v+r, v+r+kn]$ with $0 \leq r < n$. The edges
    \[
        e, f, e+n, e+2n, \dots, e+(k-1)n
    \]
    then form a $(k+1)$-crossing, contradicting the assumption that $\oT$ is $(k+1)$-crossing-free. Hence, $T$ contains exactly one edge of length $kn$.
\end{proof}

In the remainder of this section, we consider only $2$-triangulations on $\calCn.$

\begin{lemma}\label{lemma:philipslemma}
Let $E=\{[a_1,b_1],[a_2,b_2],[a_3,b_3]\}$ be a $3$-crossing labeled such that \[a_1<a_2<a_3<b_1<b_2<b_3.\] Suppose that for $i=1$ or $i=2,$ the edge $[a_{i+1},b_{i+1}]$ is a translation of $[a_{i},b_{i}].$ Then $\pi^{-1}(\pi(E))$ contains a $3$-crossing that includes exactly one translation of $[a_i,b_i].$

\end{lemma}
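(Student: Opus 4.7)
The strategy is to exhibit, in each of the two cases, an explicit $3$-crossing in $\pi^{-1}(\pi(E))$ that contains exactly one edge from the translation class of $[a_i,b_i]$. Set $mn := a_{i+1} - a_i = b_{i+1} - b_i$, which is a positive multiple of $n$ because $a_{i+1} > a_i$ and the two edges are translations of one another.

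In case $i = 1$ (so $[a_2, b_2] = [a_1, b_1] + mn$), I plan to replace the middle edge of $E$ with a translation of $[a_3, b_3]$. The candidate is
\[
F = \{[a_1, b_1],\; [a_3 - mn, b_3 - mn],\; [a_3, b_3]\},
\]
which lies in $\pi^{-1}(\pi(E))$ since $[a_3 - mn, b_3 - mn]$ is a translation of $[a_3, b_3]$. To check that $F$ is a $3$-crossing I verify the chain $a_1 < a_3 - mn < a_3 < b_1 < b_3 - mn < b_3$: the inequalities $a_3 - mn < a_3$ and $b_3 - mn < b_3$ hold because $m \geq 1$; the middle inequality $a_3 < b_1$ is part of the hypothesis on $E$; and $a_1 < a_3 - mn$, $b_1 < b_3 - mn$ rearrange to $a_2 < a_3$ and $b_2 < b_3$, which are again among the hypotheses. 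The resulting $F$ then contains $[a_1, b_1]$ as its sole translation of $[a_1, b_1]$, since the other two edges are translations of $[a_3, b_3]$.

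Case $i = 2$ (so $[a_3, b_3] = [a_2, b_2] + mn$) is completely parallel. The candidate is
\[
F = \{[a_1, b_1],\; [a_1 + mn, b_1 + mn],\; [a_3, b_3]\},
\]
and I will verify the chain $a_1 < a_1 + mn < a_3 < b_1 < b_1 + mn < b_3$ by the same shift trick: for instance $a_1 + mn < a_2 + mn = a_3$ follows from $a_1 < a_2$, and analogously for the $b$'s; the outer inequalities are immediate from $m \geq 1$ and from $a_3 < b_1$. In this $F$ the edge $[a_3, b_3]$ is the unique translation of $[a_2, b_2]$, since the other two edges are translations of $[a_1, b_1]$.

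I do not anticipate any serious obstacle; the whole argument is essentially bookkeeping. The one conceptual move is to guess the right candidate $3$-crossing, namely replacing one of the two translation-related edges of $E$ by a shift of the remaining edge of $E$; once the candidate is in hand, each crossing inequality is either a tautology coming from $m \geq 1$ or a translate by $\pm mn$ of an inequality already present in $E$.
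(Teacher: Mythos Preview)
Your construction is exactly the one the paper uses (the paper even specializes to $m=1$ after observing, via \Cref{length>kn}, that edges in the ambient $2$-triangulation have length $\le 2n$, whereas you keep general $m$), and your chain of inequalities is correct. There is, however, one genuine gap in both cases: when you assert that $F$ contains \emph{exactly one} translation of $[a_i,b_i]$ ``since the other two edges are translations of $[a_3,b_3]$'' (respectively of $[a_1,b_1]$), you are implicitly assuming that $[a_3,b_3]$ is not itself a translation of $[a_1,b_1]$ (respectively that $[a_1,b_1]$ is not a translation of $[a_2,b_2]$). Nothing in the bare hypotheses rules this out, and if all three edges of $E$ were mutual translates then $\pi^{-1}(\pi(E))$ would consist of a single translation class and no $3$-crossing in it could contain \emph{exactly one} translate.

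The paper closes this gap by invoking the length bound from \Cref{length>kn}: in the $2$-triangulation context the edges have length at most $2n$, and since $a_1<a_2<a_3<b_1$ with $a_2-a_1$ a positive multiple of $n$, one gets $a_2-a_1=n$ and $b_1-a_1\le 2n$ forces $a_3-a_1<2n$, so $[a_3,b_3]$ cannot be a further translate of $[a_1,b_1]$. You should insert this check (or at least note that the third edge lies in a different translation class) before concluding ``exactly one.''
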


\begin{proof}
We proceed assuming that $[a_2,b_2]$ is a translation of $[a_1,b_1].$ A similar proof applies in the case that $[a_3,b_3]$ is a translation of $[a_2,b_2]$ are translations.
By \Cref{length>kn}, we know that $a_2-a_1=b_2-b_1=n$ and that $[a_3,b_3]$ is not a translation of $[a_1,b_1]$ and $[a_2,b_2].$

We claim that the edges $[a_1,b_1]$, $[a_3-n,b_3-n]$, and $[a_3,b_3]$ form a $3$-crossing.
Indeed, we have
\[
a_1 = a_2-n < a_3-n<a_3,
\]
and
\[
b_1 = b_2-n < b_3-n<b_3.
\]
Since $[a_3-n,b_3-n]$ is a translation of $[a_3,b_3]$, these three edges are all contained in $\pi^{-1}(\pi(E))$, and among them only one is a translation of $[a_1,b_1],$ as required.
\end{proof}

In the following lemmas, fix a $2$-triangulation $T$ of $\calCn$ which lifts to a $2$-triangulation $\pi^{-1}(T)=\oT$ on $\ocalCn$ and a $2$-relevant angle $\angle(u,v,w)$ of $\oT.$ 
 We say an edge \emph{intersects} $\angle(u,v,w)$ if it intersects both $[u,v]$ and $[v,w].$ One could note that there always exists a $2$-boundary edge intersecting $\angle (u,v,w).$
The following definitions are consistent with \cite{MR2721464} and \cite{PHDThesis}; we repeat them here for clarity. We say an edge $e=[a,b]$ is $v$-\emph{farther} than $f=[c,d]$ if $u\prec a \preceq c \prec v \prec d \preceq b \prec w$ and $e\neq f.$ We say $e$ is $v$-\emph{maximal} if there does not exist any edge intersecting $\angle(u,v,w)$ that is $v$-farther than $e.$

Let $e=[a,b]$ be the $v$-maximal edge intersecting $\angle{(u,v,w)}$ with order $u \prec a \prec v \prec b \prec w$. We work in the next two lemmas towards showing that $[u,b]$ and $[a,w]$ are edges in $\oT.$

\begin{lemma}\label{lemma:lessthan2n}
The edges $[u,b]$ and $[a,w]$ have length at most $2n.$ 
\end{lemma}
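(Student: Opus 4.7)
It suffices to prove $b - u \le 2n$; the bound on $w - a$ follows by the symmetric argument that swaps $(u,a)$ with $(w,b)$. Suppose for contradiction that $b - u > 2n$, and set $\alpha = a - u$, $\beta = v - a$, $\gamma = b - v$, $\delta = w - b$. Since $[u,v], [a,b], [v,w] \in \oT$, \Cref{length>kn} gives $\alpha+\beta \le 2n$, $\beta+\gamma \le 2n$, and $\gamma+\delta \le 2n$, while the contradiction hypothesis reads $\alpha+\beta+\gamma > 2n$.

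The strategy is to exhibit, in $\oT$, either an edge intersecting $\angle(u,v,w)$ that is strictly $v$-farther than $[a,b]$ (contradicting $v$-maximality) or a $3$-crossing (contradicting that $\oT$ is a $2$-triangulation). The candidate edges are the $n$-translates $[u+n, v+n]$, $[v-n, w-n]$, and $[a \pm n, b \pm n]$ of the angle edges and of $[a,b]$, each lying in $\oT$ by $n$-periodicity. I would separate cases on $\gamma$. When $\gamma > n$, the triple $\{[a-n, b-n], [a,b], [v,w]\}$ pairwise crosses: the first two since $b - a = \beta + \gamma > n$; the edge $[a-n,b-n]$ with $[v,w]$ since $a - n < v < b - n < w$ (using $\gamma > n$); and $[a,b]$ with $[v,w]$ by hypothesis. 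When $\gamma \le n$, the assumption $\alpha + \beta + \gamma > 2n$ forces $\alpha + \beta > n$, so $[u,v]$ crosses its translate $[u+n, v+n]$; a subcase on whether $\alpha \ge n$ then produces either $[u+n, v+n]$ as a strict $v$-farther edge of $[a,b]$ --- using $u+n \le a$ together with $v+n \ge b$ --- or the $3$-crossing $\{[u,v], [u+n,v+n], [a,b]\}$ via $a < u+n < b < v+n$ (valid when $\alpha < n$ and $\gamma < n$).

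The main obstacle is the degenerate boundary case $\alpha = n = \gamma$, where $[u+n,v+n]$ coincides with $[a,b]$ and the $v$-farther candidate above fails. The $2$-relevance hypothesis on $\angle(u,v,w)$ --- that $[u,v]$ or $[v,w]$ has length strictly less than $2n$ --- is essential here: it forces $\beta < n$ or $\delta < n$, which is the strict inequality needed either to produce the $3$-crossing $\{[v,w], [v+n,w+n], [a+n,b+n]\}$ (when $\beta < n$ and $\delta < n$) or to identify $[v-n, w-n]$ as strictly $v$-farther than $[a,b]$ (when $\gamma = n$ and $\beta < n$, noting then that $v - n = u + \beta < u + n = a$ and $w - n \ge b$). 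Handling these boundary configurations cleanly, avoiding shared-endpoint degeneracies that can trivialize would-be crossings, will be the main technical step.
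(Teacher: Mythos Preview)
Your argument treats the five points as if they satisfy the linear order $u<a<v<b<w$ on $\ocalCn$, but the hypothesis only gives the \emph{cyclic} order $u\prec a\prec v\prec b\prec w$. The paper's proof splits into three cases according to the linear positions of $u,v,w$, and your setup (with $\alpha,\beta,\gamma,\delta$ all positive and $b-u>2n$ as the contradiction hypothesis) is only Case~1 ($u<v<w$). In Case~2 ($v<w<u$) one has $b<u$ and the bound $|u-b|\le 2n$ is immediate from $u-b<u-v\le 2n$; in Case~3 ($w<u<v$) the quantity $\delta=w-b$ is negative and $b$ may lie on either side of $w$, so the inequalities you record (e.g.\ ``$\gamma+\delta\le 2n$'') no longer express the length bound on $[v,w]$, and a separate argument using $v$-maximality of $[a,b]$ is required. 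None of this is addressed.

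Even restricted to Case~1, the $n$-translate scheme has loose ends that are more than boundary degeneracies. In your subcase $\gamma\le n$, $\alpha\ge n$, the candidate $[u+n,v+n]$ is $v$-farther than $[a,b]$ only when $v+n<w$; if $\gamma+\delta\le n$ this fails and $[u+n,v+n]$ does not even intersect $[v,w]$, so neither the $v$-maximality contradiction nor the advertised $3$-crossing materializes. The paper sidesteps all of this in Case~1 by translating by $2n$ instead of $n$: the single triple $\{[a-2n,b-2n],\,[v-2n,w-2n],\,[u,v]\}$ is a $3$-crossing directly from $b-u>2n$ together with the length bounds from \Cref{length>kn}, with no subcases and no appeal to $2$-relevance.
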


\begin{proof}
We only prove that $|b-u|\leq 2n,$ as the argument showing $|w-a|\leq 2n$ is similar. There are three possible orderings of the vertices $u$, $v$, and $w$:

Case 1: $u < v < w$\\
Then $u<a<v<b<w.$ Suppose for contradiction that $b-u<2n.$ We claim that edges $[a-2n,b-2n],$ $[v-2n,w-2n],$ and $[u,v]$ form a $3$-crossing. Indeed, \[
a-2n < v-2n < u < b-2n < w-2n < v.
\] The first and fourth inequalities follow from the fact that $[a,b]$ and $[v,w]$ intersect. The second and fifth follow from \Cref{length>kn}. The third is our assumption.

Case 2: $v<w<u$\\

Then $v<b<w<u.$ By \Cref{length>kn}, we directly obtain
\[
u-b < u - v \leq 2n.
\]

Case 3: $w < u < v$\\
Then we must have $w < u < a < v$, and there are two possible positions for $b:$ either $b<w$ or $v<b.$ If $b<w,$ then $u-b<a-b\leq 2n$ by \Cref{length>kn}. If $v<b,$ suppose for contradiction that $b-u>2n.$ We claim that the edge $[a-2n,-2n]$ intersects $\angle(u,v,w)$ $v$-farther than $[a,b],$ contradicting the $v$-maximality of $[a,b].$ Indeed, $[a-2n,b-2n]$ and $[u,v]$ intersect because
\[a-2n<v-2n<u<b-2n<a<v.\]
Here, the second and fourth inequalities follow from \Cref{length>kn}, and the third from our assumption. Similarly, $[a-2n,b-2n]$ and $[v,w]$ intersect because 
\[a-2n<v-2n<w<u<b-2n<a<v.\] Hence, $[a-2n,b-2n]$ intersects $\angle(u,v,w)$ and is $v$-farther than $[a,b],$ contradicting the $v$-maximality of $[a,b].$ 
\end{proof}

\begin{lemma}\label{lemma:3crossing2crossing}
Assume that there exists a $2$-crossing $F = \{f_1, f_2\} \subset \oT\cup \pi^{-1}(\pi([u,b]))$ such that $F\cup\{[u,b]\}$ is a $3$-crossing and $F$ contains a translation of $[u,b]$. Then there exists a $2$-crossing $F' \subset \oT$ intersecting $[u,b]$ such that $F' \cup\{[u,b]\}$ is a $3$-crossing.
\end{lemma}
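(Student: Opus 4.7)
The plan is to analyze the given 3-crossing $F \cup \{[u,b]\}$ together with the translation structure, and then produce a 2-crossing $F' \subset \oT$ by carefully substituting the translate $f_1$ of $[u,b]$ with edges of $\oT$ obtained from $f_2$ or from the $v$-maximal edge $e$. First, I would set up: write $F = \{f_1, f_2\}$ with $f_1$ the translation of $[u,b]$. By Lemma~\ref{lemma:lessthan2n}, $|b-u| \le 2n$, so the only translates of $[u,b]$ that can intersect $[u,b]$ are $[u \pm n, b \pm n]$; WLOG $f_1 = [u+n, b+n]$. A length-bound argument analogous to the proof of Lemma~\ref{lemma:philipslemma} rules out $f_2$ being a second translation of $[u,b]$ (three pairwise-intersecting translates of an edge of length $\le 2n$ is impossible), so $f_2 \in \oT$.

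Writing $f_2 = [x,y]$, I sort $\{[u,b], f_2, f_1\}$ by left endpoint. Since $f_1$'s left endpoint is $u+n$, there are three cases: (i) $x<u$, (ii) $u<x<u+n$, and (iii) $x>u+n$. In cases (i) and (iii), the translates $f_2 - n, f_2 \in \oT$ (by $n$-periodicity of $\oT$) form the desired $F'$: verifying that $\{[u,b], f_2-n, f_2\}$ is a 3-crossing reduces to checking $y-x > n$, which in case (i) follows from $x<u$ and $y>u+n$, and in case (iii) follows from $x < b$ and $y > b+n$ (both inequalities extracted from the given 3-crossing endpoint ordering).

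The harder sandwich case (ii) I would split into three sub-cases. If $b-x > n$, then $\{[u,b], f_2, f_2+n\}$ is already a 3-crossing and $F' = \{f_2, f_2+n\}$. If instead $y-x > n$, then $\{[u,b], f_2-n, f_2\}$ is a 3-crossing and $F' = \{f_2-n, f_2\}$. The genuinely delicate sub-case is $b-x \le n$ and $y-x \le n$; here translates of $f_2$ alone cannot pair with $f_2$ into a 2-crossing. In this sub-case I would use the $v$-maximal edge $e = [\alpha, b] \in \oT$ and its translate $e + n = [\alpha + n, b + n] \in \oT$, and claim $F' = \{f_2, e+n\}$ works. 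The crux is the inequality $b - \alpha > n$: I would derive it by observing that case (ii) requires $b - u > n$, and the cyclic constraint $u \prec \alpha \prec v \prec b \prec w$ together with $|b-u| \le 2n$ forces $b$ to be the lift $\pi(b) + n$ of its projection, one fundamental domain past $v$, while $\alpha$ lies linearly in $(u,v)$; comparing projections then gives $b - \alpha = \pi(b) + n - \alpha > n$. From this one checks $\alpha + n < b$ and $\alpha + n < y$, and the three required pairwise interleavings follow.

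The main obstacle is precisely the last sub-case of case (ii). The natural attempt using only translates of $f_2$ fails exactly when $f_2$ is short and $b$ is close to $x + n$, and recovering a 2-crossing in $\oT$ requires importing the $v$-maximal edge $e$ and verifying $b - \alpha > n$ from the cyclic-order data. If this inequality can be established cleanly from the geometric configuration (in particular from the observation that the only ambient ordering of $u,v,w$ making case (ii) non-vacuous is the ``wrap-around'' one), then the remainder of the proof is a sequence of direct interleaving checks.
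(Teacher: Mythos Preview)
Your overall decomposition into cases (i)--(iii) based on the position of the left endpoint $x$ of $f_2$ is reasonable, and cases (i), (iii), and the first two sub-cases of (ii) are fine. The genuine gap is in the final sub-case of (ii), where you claim $b-\alpha>n$ (with $\alpha=a$ the left endpoint of the $v$-maximal edge $e=[a,b]$). Your justification --- that case~(ii) forces a ``wrap-around'' ordering with $b=\pi(b)+n$ one fundamental domain past $v$ while $a$ lies linearly in $(u,v)$ --- does not actually yield $b-a>n$: from $u<a<v<b$ and $b-u>n$ you only get $b-a<b-u\le 2n$, and nothing in the hypotheses (including $v$-maximality of $e$ or the constraints $b-x\le n$, $y-x\le n$) forces $a$ to lie to the left of $b-n$. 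Without $b-a>n$, the edge $e+n=[a+n,b+n]$ fails to cross $[u,b]$, so $F'=\{f_2,e+n\}$ does not give a $3$-crossing with $[u,b]$. A secondary issue is that your ``WLOG $f_1=[u+n,b+n]$'' is not automatic, since the roles of $u$ and $b$ in the angle $\angle(u,v,w)$ are not symmetric; the case $f_1=[u-n,b-n]$ needs a separate (mirror) argument.

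The paper handles precisely this difficulty by a different mechanism: rather than trying to pair $f_2$ with a translate of $e$, it exploits the angle structure directly, building $3$-crossings from combinations of $f_2$ with $[u,v]$, $[v,w]$, or $[a,b]$ themselves (all of which lie in $\oT$), and reducing via \Cref{lemma:philipslemma} when a translate of $[u,b]$ appears. In particular, the paper's case analysis pivots on whether the translate of $[u,b]$ bisects $\angle(u,v,w)$ and on the position of $d_2$ relative to $w$, eventually reducing to a contradiction with the $v$-maximality of $[a,b]$. Your strategy of using only $f_2$, its translates, and $e+n$ is cleaner when it works, but in the residual sub-case it does not, and you will need to bring in the edges $[u,v]$ and $[v,w]$ of the angle (as the paper does) to finish.
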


\begin{proof}
We use $\alpha_\infty$, the point at infinity on $\ocalCn:$ by placing $\alpha_\infty$ in the cyclic order of a set of points, we can determine their ordering. To begin, note that by \Cref{lemma:lessthan2n}, the edge $[u,b]$ has length at most $2n$, so $F$ must contain at least one edge in $\oT$.

Without loss of generality, let $f_1 = [c_1,d_1]$ and $f_2 = [c_2,d_2]$, with cyclic order 

\[
b \prec c_1 \prec c_2 \prec u \prec d_1 \prec d_2.
\]
First, consider the case where $f_2 = [v,d_2]$ is a translation of $[u,b_1]$ that bisects $\angle(u,v,w)$. Observe that if $u\prec \alpha_\infty\prec v$ or $b\prec \alpha_\infty \prec d_2,$ then $v$ is a translation of $u$ and $d_2$ is a translation of $v.$ As such, $\{f_1, f_2, [u,b]\}$ forms a $3$-crossing satisfying the hypotheses of \Cref{lemma:philipslemma}, which in turn yields our desired $2$-crossing $F'.$ This leaves two remaining configurations: $v\prec \alpha_\infty\prec b$ and $d_2\prec\alpha_\infty \prec u,$ which we consider next.

If $v\prec \alpha_\infty\prec b,$ then $v=u+n.$ In the case that $u-n<c_1,$ we have a $3$-crossing $\{f_1,[u,u-n]\}$ satisfying the claim because $[u,u-n]=[v-n,u-n]$ is an edge of $\oT.$ Now assuming $u-n>c_1,$ we have 
\[b<c_1<u-n<c_1+n<u<d_1<d_1+n\]
and the $2$-crossing $\{f_1,f_1+n\}$ satisfies the claim.

If $d_1\prec \alpha_{\infty}\prec u,$ then $v=u+n.$ In the case that $d_1-n<v,$ we have a $2$-crossing $\{d_1,[v,v+n]\}$ satisfying the claim since $[v,v+n]=[u+n,v+n]$ is an edge of $\oT.$ Now assuming $d_1-n\leq v,$ because $v+n<d_1<d_2,$ we have $v<d_1-n<d_2-n=b$ as well. This gives 
\[c_1-n<u<c_1<v<d_1-n<b<d_1\] and so the $2$-crossing $\{f_1,f_1-n\}$ satisfies the claim.

As $\oT$ is $3$-crossing-free, a subset of a translation of any $3$-crossing in $\oT\cup F$ must satisfy the claim. We proceed assuming that $\oT\cup F$ is $3$-crossing-free. First consider the position of $d_2$ in the cyclic order. If $w\prec d_2\prec u,$ we need only consider the case that $c_2\neq v.$ Then either $u\prec c_2\prec v$ and $\{f_1,f_2,[u,v]\}$ forms a $3$-crossing, or $v\prec c_2\prec b_1$, in which case $\{f_2,[a,b],[v,w]\}$ forms a $3$-crossing. Consequently, we assume $b\prec d_1\prec d_2\preceq w.$ Then if $a\prec c_1\prec c_2\prec b,$ $\{f_2,[a,b],[u,b]\}$ is a $3$-crossing. Thus, we have reduced to the case that $[c_1,d_1]$ intersects $\angle(u,v,w)$ positioned $v$-farther than $[a,b].$ By the $v$-maximality of $[a,b],$ $[c_1,d_1]$ must be a translation of $[u,b_1].$ We proceed to show the claim is satisfied in this final case.

Note that if $d_1\prec \alpha_\infty\prec u$ or $d_1\prec \alpha_\infty\prec c_1,$ then $d_1$ is a translation of $b$ and $c_1$ is a translation of $u.$ As such, the $3$-crossing $\{[v,w],[u,b],[c_1,d_1]\}$ satisfies the conditions of \Cref{lemma:philipslemma}, which in turn yields a $2$-crossing that satisfies the claim. If $u\prec \alpha_\infty \prec c_1,$ then $u-n=d$ and $v-n<c_1=b_1-n,$ so the $2$-crossing $\{[v,w],[v-n,u-n]\}$ satisfies the claim. If $v-n\leq w,$ then since $a<v,$ we also have $a-n<v-n<w.$ Thus, $[a-n,b-n]$ intersects $\angle(u,v,w)$ positioned $v$-farther than $[a,b],$ contradicting the $v$-maximality of $[a,b].$ 
This completes the proof.
\end{proof}

\begin{theorem}[Star Decomposition Theorem]\label{2starparty}
Let $T$ be a $2$-triangulation on $\calCn$ which lifts to $\oT$ on $\ocalCn$, then any $2$-relevant angle $\angle (u,v,w)$ in $\oT$ belongs to a unique $2$-star in $\oT.$
\end{theorem}

\begin{proof} Let $e=[a,b] \in \oT$ denote the unique $v$-maximal edge intersecting $\angle (u,v,w)$ labeled such that $u\prec a \prec v \prec b \prec w.$  Suppose that $[u,b]\notin \oT,$ then there exists a $2$-crossing $\{f_1,f_2\}\subset \oT\cup \pi^{-1}(\pi([u,b]))$ that prevents the edge $[u,b]$ (that is, $\{f_1, f_2, [u,b]\}$ forms a $3$-crossing). By \Cref{lemma:lessthan2n}, we know that at least one of $f_1$ or $f_2$ lies in $\oT$, as otherwise $|u-b|>2n$. Then by \Cref{lemma:3crossing2crossing}, we may additionally assume that both $f_1$ and $f_2$ lie in $\oT.$
Similar to the proof of \cite[Theorem~4.1]{MR2721464}, we can show that $[u,b]$ (and likewise $[a,w]$) is contained in $\oT,$ and moreover show that $\oT$ contains the $2$-star determined by vertices $u\prec a \prec v \prec b \prec w.$ Uniqueness follows from the definition of angles and $k$-stars.
\end{proof}

\begin{corollary}[Maximal Lifting Theorem]\label{maximalityfromstars}
Given a $2$-triangulation $T$ on $\calCn$ which lifts to $\oT$ on $\ocalCn,$
let $e$ be an edge on $\ocalCn$ of length $\le 2n$ that is not contained in $\oT$. Then $\oT\cup \{e\}$ contains a $3$-crossing.
\end{corollary}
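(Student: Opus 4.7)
The plan is to apply the Star Decomposition Theorem (\Cref{2starparty}) to the angle of $\oT$ at $v$ bisected by $e$, and then extract a $3$-crossing directly from the resulting $2$-star.

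Write $e=[u,v]$. Since $e\notin\oT$, the direction from $v$ toward $u$ lies strictly between two consecutive $\oT$-edges at $v$, which we label $[v,a]$ and $[v,b]$ so that $u\prec a\prec v\prec b$ in the cyclic order on $\ocalCn$. Then $\angle(a,v,b)$ is an angle of $\oT$ and $e$ is one of its bisectors.

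Next I would verify the hypothesis of \Cref{2starparty}, namely that $|a-v|<2n$ or $|b-v|<2n$. By \Cref{length>kn} every $\oT$-edge has length at most $2n$, so the only way both fail is $\{a,b\}=\{v-2n,v+2n\}$. But any bisector of the angle $\angle(v-2n,v,v+2n)$ must lie in the cyclic arc outside the interval $[v-2n,v+2n]$, forcing it to have length $>2n$. Since $|e|\leq 2n$, our $e$ cannot bisect such an angle, so this degenerate configuration is ruled out. Thus \Cref{2starparty} supplies a unique $2$-star $S\subseteq\oT$ containing $\angle(a,v,b)$.

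Label the five vertices of $S$ in cyclic order $s_0\prec s_1\prec s_2\prec s_3\prec s_4$ with $v=s_0$, so that the star edges incident to $v$ are $[s_0,s_2]=[v,b]$ and $[s_0,s_3]=[v,a]$; the remaining star edges are $[s_1,s_3]$, $[s_2,s_4]$, and $[s_1,s_4]$. The bisector condition $u\prec s_3\prec s_0\prec s_2$ together with the cyclic order of $S$ forces $u$ to lie strictly between $s_2$ and $s_3$, yielding the linear arrangement $s_0<s_1<s_2<u<s_3<s_4$. In this arrangement the endpoints of $e=[s_0,u]$, $[s_1,s_3]$, and $[s_2,s_4]$ pairwise interleave, so these three edges form a $3$-crossing contained in $\oT\cup\{e\}$.

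The main obstacle is the second step, ruling out the degenerate case $|a-v|=|b-v|=2n$. I expect this to reduce to the short cyclic-order calculation sketched above, which uses only \Cref{length>kn} and the length hypothesis $|e|\leq 2n$.
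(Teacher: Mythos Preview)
Your proposal is correct and follows essentially the same route as the paper: pick the angle of $\oT$ at one endpoint of $e$ that $e$ bisects, apply \Cref{2starparty}, and use the two star edges not incident to that vertex together with $e$ to form the $3$-crossing. The paper simply asserts that because $|e|\le 2n$ the bisected angle has a side of length $<2n$, whereas you spell out the only obstruction $\{a,b\}=\{v-2n,v+2n\}$ and rule it out; this is the same argument with the gap filled in. One cosmetic point: your ``linear arrangement $s_0<s_1<s_2<u<s_3<s_4$'' should really be stated as the cyclic order $s_0\prec s_1\prec s_2\prec u\prec s_3\prec s_4$, since fixing $v=s_0$ does not make $s_0$ least in the linear order on $\ocalCn$; the crossing verification only needs the cyclic order, so the conclusion is unaffected.
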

\begin{proof}
Set $e=[v_1,v_2].$ Since $|v_2-v_1|\le2n,$ $e$ bisects an angle $\angle (u,v_1,w)$ of $\oT$ such that at least one of $[u,v_1]$ or $[v_1,w]$ has length $<2n.$ Then by \Cref{2starparty}, $\angle (u,v_1,w)$ is contained in a $2$-star determined by vertices $u\prec a\prec v_1\prec b \prec w.$ Since $[v_1,v_2]$ bisects $\angle (u,v_1,w),$ $\{[v_1,v_2],[a,w],[u,b]\}$ gives our desired $3$-crossing.    
\end{proof}

We conjecture that the Star Decomposition Theorem (\Cref{2starparty}) and the Maximal Lifting Theorem (\Cref{maximalityfromstars}) on the half-cylinder can be generalized for arbitrary $k$ as follows:

\begin{conjecture}\label{conj:stardecompk}
Let $T$ be a $k$-triangulation on $\calCn$ which lifts to $\oT$ on $\ocalCn$, then any $k$-relevant angle $\angle (u,v,w)$ in $\oT$ belongs to a unique $k$-star in $\oT.$
\end{conjecture}

\begin{conjecture}\label{conj:maxliftingk}
    Given a $k$-triangulation $T$ on $\calCn$ which lifts to $\oT$ on $\ocalCn,$ let $e$ be an edge on $\ocalCn$ of length $\le kn$ that is not contained in $\oT$. Then $\oT\cup \{e\}$ contains a $(k+1)$-crossing.
\end{conjecture}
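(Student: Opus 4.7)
The plan is to derive \Cref{conj:maxliftingk} from \Cref{conj:stardecompk}, directly generalizing the deduction of \Cref{maximalityfromstars} from \Cref{2starparty}. Suppose $e = [v_1, v_2]$ is an edge of length $\ell < kn$ on $\ocalCn$ not contained in $\oT$; since every edge of length at most $k$ lies in $\oT$, we must have $\ell > k$.

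The first step is to produce an angle of $\oT$ at $v_1$ bisected by $e$ that satisfies the hypothesis of \Cref{conj:stardecompk}. The edges of $\oT$ at $v_1$ partition the region around $v_1$ into wedges, and $e$ lies in a unique such wedge, giving an angle $\angle(u, v_1, w)$ bisected by $e$. Because $k < \ell$ and the $k$-boundary edge $[v_1, v_1 + k]$ is in $\oT$, there exists an edge of $\oT$ incident to $v_1$ whose other endpoint lies strictly between $v_1$ and $v_2$ in the angular order at $v_1$. Taking such an endpoint closest to $v_2$, we obtain one of $u$ or $w$ (say $u$) with $|u - v_1| < \ell < kn$. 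Applying \Cref{conj:stardecompk} then yields a unique $k$-star $S \subseteq \oT$ containing $\angle(u, v_1, w)$.

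Label the vertices of $S$ in cyclic order as $z_0 \prec z_1 \prec \cdots \prec z_{2k}$, with $v_1 = z_k$ (the middle vertex), $u = z_0$, and $w = z_{2k}$. The edges of $S$ are the chords $[z_i, z_{i+k}]$ for $i = 0, 1, \ldots, 2k$, with indices modulo $2k+1$. The claim is that the $k$ star edges
\[
    [z_{k+1}, z_0],\ [z_{k+2}, z_1],\ \ldots,\ [z_{2k}, z_{k-1}],
\]
together with $e$, form the desired $(k+1)$-crossing. Any two such edges $[z_{k+a}, z_{a-1}]$ and $[z_{k+b}, z_{b-1}]$ with $1 \leq a < b \leq k$ have endpoints interleaving as $z_{a-1} \prec z_{b-1} \prec z_{k+a} \prec z_{k+b}$, so they cross pairwise. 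Moreover, because $v_2$ lies in the wedge of $\angle(z_0, z_k, z_{2k})$ at $v_1 = z_k$ --- that is, in the cyclic arc from $z_{2k}$ back to $z_0$ not through $z_k$ --- the chord $e$ separates the star vertices into the sets $\{z_0, \ldots, z_{k-1}\}$ and $\{z_{k+1}, \ldots, z_{2k}\}$, and each chosen star edge has an endpoint in each set, hence crosses $e$.

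The principal obstacle is that this approach depends entirely on \Cref{conj:stardecompk}, itself an open problem for $k \geq 3$. The combinatorial pieces of the argument given \Cref{conj:stardecompk} are clean; the genuine difficulty lies in extending the star decomposition theorem from $k = 2$, since the chain of lemmas \Cref{lemma:philipslemma}--\Cref{lemma:3crossing2crossing} underlying \Cref{2starparty} exploits features particular to the $k = 2$ setting (most notably the explicit replacement arguments within two-edge crossings) that do not transparently carry over to higher $k$.
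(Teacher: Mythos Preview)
The statement you are addressing is \emph{Conjecture}~\ref{conj:maxliftingk}: the paper does not prove it and offers no argument beyond the $k=2$ case (\Cref{maximalityfromstars}). So there is no ``paper's own proof'' to compare against; what can be compared is your conditional reduction with the paper's $k=2$ deduction of \Cref{maximalityfromstars} from \Cref{2starparty}.

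Your conditional argument is correct and is exactly the natural generalization of that deduction. The selection of the angle at $v_1$ goes through: since $[v_1,v_1+k]\in\oT$ and $v_1+k$ lies strictly between $v_1$ and $v_2$ along the short arc, the edge $[v_1,w]$ of the bisected angle on that side has length $<\ell<kn$, so the hypothesis of \Cref{conj:stardecompk} is met. (A small quibble: with your conventions it is $w$, not $u$, that is forced to be close to $v_1$; this does not affect the argument.) The $(k+1)$-crossing you exhibit is also correct: writing the star vertices as $z_0\prec\cdots\prec z_{2k}$ with $v_1=z_k$, the $k$ star edges $[z_{k+j},z_{j-1}]$ for $j=1,\dots,k$ are genuine edges of the $k$-star (since $z_{(k+j)+k}=z_{j-1}$ modulo $2k+1$), they pairwise cross, and each separates $z_k$ from $v_2$, so together with $e$ they form a $(k+1)$-crossing in $\oT\cup\{e\}$.

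You are equally right about the obstacle. Everything rests on \Cref{conj:stardecompk}, which the paper leaves open for $k\ge 3$; the paper itself remarks (just before \Cref{conj:bijection-for-general-k}) that its later results admit conditional generalizations once the star decomposition is available. So what you have written is a valid \emph{reduction} of \Cref{conj:maxliftingk} to \Cref{conj:stardecompk}, not a proof of \Cref{conj:maxliftingk} itself, and you have identified this accurately.
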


Furthurmore, we conjecture that analogous results hold for the $k$-triangulations on all finite-type surfaces as well. Following \cite[Section~1.2~of~Part~III]{PHDThesis}, we view any surface $\calS$ as the quotient of the hyperbolic disc $\D$ by a Fuchsian group $\Gamma$, where $\D=\ocalS$ is the universal cover of $\calS$. In this setting we likewise define the \emph{length} of an edge on $\D$ (note that this length may not always be finite). Consistent with \Cref{convention:exclude_short_edge}, we exclude all edges of length $<k$ when discussing a $k$-triangulation on $\ocalS$ (or $\calS$).

\begin{conjecture}[Star Decomposition Conjecture]\label{conj:othersurface_stardecompk}
    Given a finite-type surface $\calS$ and its universal cover $\pi:\ocalS\rightarrow\calS$. Let $T$ be a $k$-triangulation on $\calS$ which lifts to $\oT$ on $\ocalS.$ Let $\angle (u,v,w)$ be an angle in $\oT$. Suppose that $[u,v]$ and $[v,w]$ are not the preimage of the same edge $e\in T$ under $\pi$, then $\angle (u,v,w)$ belongs to a unique $k$-star contained in $\oT.$ 
\end{conjecture}

\begin{conjecture}[Maximal Lifting Conjecture]\label{conj:othersurface_maxliftingk}[cf.\ \cite[Remark~12~of~Part~III]{PHDThesis} and \Cref{remark:revisedstrong}]
    Given a finite-type surface $\calS$ and its universal cover $\pi:\ocalS\rightarrow\calS$. Let $T$ be a $k$-triangulation on $\calS$ which lifts to $\oT$ on $\ocalS.$ Let $e$ be an edge on $\ocalS$ such that $e$ is not contained in $\oT$ and $\pi^{-1}(\pi(e))$ does not contain a $(k+1)$-crossing. Then, $\oT\cup \{e\}$ contains a $(k+1)$-crossing.
\end{conjecture}

\section{Bijection: $2$-triangulations on the half-cylinder and $n$-periodic $2$-triangulations on the $4n$-gon}\label{sec:bij}

\begin{definition}
    An \emph{$n$-periodic} $k$-triangulation on a $2kn$-gon is a $k$-triangulation on the $2kn$-gon that is invariant under rotation by $\frac{2\pi}{2k} = \frac{\pi}{k}$ radians.
\end{definition}

An example of a $3$-periodic $2$-triangulation on the $12$-gon was given in \Cref{3periodic2tri}.

\begin{theorem}
\label{thm:annular-triangulations-as-periodic}
    There is a canonical bijection between $2$-triangulations on $\calCn$ and $n$-periodic $2$-triangulations on the $4n$-gon.
\end{theorem}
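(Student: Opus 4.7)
The plan is to construct the bijection via the natural quotient $\overline{\pi} \colon \ocalCn \to P_{4n}$ identifying $v \sim v + 4n$, where $P_{4n}$ denotes the $4n$-gon. Given a $2$-triangulation $T$ on $\calCn$ with lift $\oT = \pi^{-1}(T)$, I define $\varphi(T) = \overline{\pi}(\oT)$. By \Cref{length>kn}, every edge of $\oT$ has length at most $2n < 4n$, so $\overline{\pi}$ is injective on each edge, and the $n$-periodicity of $\oT$ ensures that $\varphi(T)$ is invariant under rotation by $n$ (equivalently, $\pi/2$) on $P_{4n}$. Conversely, given an $n$-periodic $2$-triangulation $T'$ on $P_{4n}$, the inverse $\psi(T')$ is defined to be the $n$-periodic edge set on $\ocalCn$ obtained by taking all $n$-translates of the unique length-at-most-$2n$ lift of each edge $E \in T'$. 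At the level of edge sets, inverse-ness of $\varphi$ and $\psi$ is immediate, since every edge of $P_{4n}$ admits a unique short lift on $\ocalCn$ up to $4n$-translation.

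What remains is to verify that $\varphi(T)$ is actually a $2$-triangulation on $P_{4n}$ and that $\psi(T')$ is a $2$-triangulation on $\calCn$. The $3$-crossing-free property on either side reduces to an equivalence of $3$-crossings under $\overline{\pi}$: a $3$-crossing on $\ocalCn$ whose edges all have length at most $2n$ has endpoints spanning strictly less than $4n$, and therefore projects onto a $3$-crossing on $P_{4n}$, while the reverse lifting requires more care (see the obstacle below). For maximality of $\varphi(T)$ on $P_{4n}$: if some $E \notin \varphi(T)$ had $\varphi(T) \cup \{E\}$ still $3$-crossing-free, then its short lift $\tilde E$ would lie outside $\oT$, and the Maximal Lifting Theorem (\Cref{maximalityfromstars}) would supply a $3$-crossing in $\oT \cup \{\tilde E\}$; projecting produces a $3$-crossing in $\varphi(T) \cup \{E\}$, a contradiction. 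Maximality of $\psi(T')$ on $\calCn$ is handled analogously: the contrapositive reduces to lifting a $3$-crossing of $T' \cup \{\overline{\pi}(\tilde e)\}$ back to $\ocalCn$, where the maximality of $T'$ on $P_{4n}$ supplies the needed $P_{4n}$-side $3$-crossing.

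The principal technical obstacle is this lifting of a $3$-crossing from $P_{4n}$ to $\ocalCn$. When the short sides of the three $P_{4n}$-edges are compatible with a single cyclic cut of $P_{4n}$, the short lifts fit into a common linear window of length less than $4n$ on $\ocalCn$ and form an immediate $3$-crossing. Otherwise, one must exploit the $n$-periodicity of $\oT$: each short lift may be translated independently by multiples of $n$ while remaining in $\oT$, and the task is to show that such independent shifts can always be chosen so that the six resulting endpoints interleave in the pattern required for a $3$-crossing. Arguing that a consistent placement compatible with the residues of the endpoints modulo $n$ always exists, even in the mixed-orientation case where no single cut works, is the heart of the combinatorial argument, and the main obstacle beyond the tautological edge-level bijection.
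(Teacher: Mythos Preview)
Your approach is the same as the paper's: pass to the $\bmod\,4n$ quotient $\overline\pi\colon\ocalCn\to P_{4n}$, invoke \Cref{maximalityfromstars} for maximality of $\varphi(T)$, and reduce everything else to an equivalence of $3$-crossings under $\overline\pi$. You are right that the forward projection of a $3$-crossing is immediate (the six endpoints span $<4n$), and you are also right that the reverse lifting is where the actual content lies. The paper's ``similarly'' is quick here: in particular your instinct that $4n$-translates alone need not suffice is correct --- for $n=3$ the $P_{12}$ $3$-crossing $\{[0,6],[1,8],[3,9]\}$ has short lifts $[0,6],[8,13],[3,9]$, and no system of $12$-shifts makes them pairwise crossing on $\ocalCn$, so $n$-periodicity is genuinely needed.

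That said, you stop at naming the obstacle rather than dispatching it, and this leaves your argument incomplete at exactly the point you flag. The missing step is short. Linearize $p_1<p_2<p_3<q_1<q_2<q_3<p_1+4n$ and set $\ell_i=q_i-p_i$. If all $\ell_i\le 2n$ (respectively all $\ell_i>2n$), the lifts $[p_i,q_i]$ (respectively $[q_i,p_i+4n]$) already form a $3$-crossing. If exactly one $\ell_i$ exceeds $2n$ and it is $\ell_1$ or $\ell_3$, shifting that single short lift by $\pm 4n$ suffices --- check directly that $\{[q_1,p_1+4n],[p_2,q_2],[p_3,q_3]\}$ and $\{[p_1,q_1],[p_2,q_2],[q_3-4n,p_3]\}$ are $3$-crossings. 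Only the configuration $\ell_1,\ell_3\le 2n<\ell_2$ (and its complement under $\ell_i\mapsto 4n-\ell_i$) resists $4n$-shifts; here one replaces $e_2$ by a suitable $n$-rotate in the $n$-periodic set, which pushes the configuration into one of the easy cases (in the example above, $e_2-n=[10,5]$ gives the $\ocalCn$ $3$-crossing $\{[0,6],[3,9],[5,10]\}$). So the obstacle is real but local; complete this finite case analysis rather than leaving it as ``the heart of the combinatorial argument.''
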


\begin{proof}

Let $E$ be a set of diagonals on $\calCn$ which lifts to $\overline{E}$ on $\ocalCn.$ Label the vertices of the $4n$-gon counterclockwise by $\alpha_{[i]}$ where $[i]$ is the congruence class of $i$ modulo $4n.$ We define $\phi(E) = \{[\alpha_{[i]},\alpha_{[j]}] \mid [\alpha_i,\alpha_j]\in \overline{E}\}.$ We observe that $\phi$ yields a bijection between subsets of edges of $\calCn$ and $n$-periodic subsets of edges on the $4n$-gon, namely, \ a subset of edges on the $4n$-gon invariant under rotation by $\frac{\pi}{2}$. 

First, we show that $\phi$ and $\phi^{-1}$ preserve $3$-crossings.  Note that a $3$-crossing $E$ on $\calCn$ is the projection of a $3$-crossing $\{[u_1,v_1],[u_2,v_2],[u_3,v_3]\}$ on $\ocalCn$ with vertices labeled such that $u_1\prec u_2 \prec u_3 \prec v_1 \prec v_2 \prec v_3.$ Since the greatest pairwise distance between vertices in $\{u_1,u_2,u_3,v_1,v_2,v_3\}$ is at most $4n,$ the given cyclic orientation is preserved by $\phi\circ \pi.$  As information about intersections is uniquely given by cyclic order on the endpoints, $\phi(E)$ is a $3$-crossing on the $4n$-gon. Similarly, a $3$-crossing $F$ on the $4n$-gon is mapped to a unique $3$-crossing $\phi^{-1}(E)=F$ on $\calCn.$ Thus, a collection of edges $E$ on $\calCn$ is $3$-crossing-free if and only if $\phi(E)$ is $3$-crossing-free.

Then we show that $\phi$ restricted to $k$-triangulations yields our desired bijection.
Consider a $k$-triangulation $T$ on $\calCn.$ For any edge $e$ on $\ocalCn$ not contained in $\oT$ 
{that has length $\le 2n$,} \Cref{maximalityfromstars} shows that $\oT\cup\{e\}$ contains a $3$-crossing. Thus, $\phi(T)$ is a $k$-triangulation on the $4n$-gon. Conversely, as adding an edge to an $n$-periodic $2$-triangulation on the $4n$-gon creates a $3$-crossing, we conclude that $\phi^{-1}$ sends $k$-triangulations to $k$-triangulations.
\end{proof}

The size of the orbit of an edge or $k$-star on the $2kn$-gon under the rotation action of $\mathbb{Z}/2k\ZZ$ is equal to the size of its preimage under $\phi^{-1}.$ For the below proofs, note that the size of the orbit of an edge $e$ is $2k$ if $e$ has length $<kn$ and is $k$ if $e$ has length $kn.$ Additionally, the size of the orbit of any $k$-star is $2k.$ Using the bijection outlined in \Cref{thm:annular-triangulations-as-periodic} and this information about orbits, we can determine the number of $2$-stars, $2$-relevant edges, and edges in a given $2$-triangulation on $\ocalCn.$

As shown in \cite{MR2721464}, every $k$-triangulation on the $2kn$-gon has exactly $2kn-2k$ $k$-stars, $k(2kn-2k-1)$ $k$-relevant edges, and $k(4kn-2k-1)$ edges. The following counts then follow directly from the bijection in \Cref{thm:annular-triangulations-as-periodic}.

\begin{corollary}\label{Cnpurity}
Any $2$-triangulation on $\calCn$ contains exactly $n-1$ $2$-stars, $2(n-1)$ $2$-relevant edges, and $2(2n-1)$ total edges.
\end{corollary}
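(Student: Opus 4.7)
The plan is to deduce each of the three counts directly from the bijection $\phi$ of \Cref{thm:annular-triangulations-as-periodic} by reducing them to orbit counts under the $\ZZ/4\ZZ$-action on the $4n$-gon generated by rotation by $\pi/2$. Since $\phi^{-1}$ sends each $\ZZ/4\ZZ$-orbit of edges (resp.\ $2$-stars) on the $4n$-gon to a single edge (resp.\ $2$-star) on $\calCn$, the three quantities we want are precisely the numbers of $\ZZ/4\ZZ$-orbits of $2$-stars, of $2$-relevant edges, and of all edges, in the $n$-periodic $2$-triangulation $\phi(T)$ on the $4n$-gon.

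The inputs I would marshal are: (i) the $k = 2$, $N = 4n$ specialization of the polygon formulas cited from \cite{MR2721464}, namely $4n - 4$ $2$-stars, $2(4n-5) = 8n-10$ $2$-relevant edges, and $2(8n-5) = 16n-10$ total edges in any $2$-triangulation of the $4n$-gon; (ii) the orbit-size information stated immediately after \Cref{thm:annular-triangulations-as-periodic}, namely that every $2$-star and every edge of length $<2n$ has orbit size $4$, while every diameter (length exactly $2n$) has orbit size $2$; and (iii) \Cref{e!knedge}, which ensures that $T$ contains exactly one edge of length $2n$, equivalently that $\phi(T)$ contains exactly one $\ZZ/4\ZZ$-orbit of diameters, contributing exactly $2$ diameter edges to $\phi(T)$.

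Given these inputs, the three counts fall out by bookkeeping. First, the $2$-stars of $\phi(T)$ partition into orbits of size $4$ only, giving $(4n-4)/4 = n-1$ $2$-stars on $\calCn$. Next, for $2$-relevant edges, the unique diameter orbit in $\phi(T)$ accounts for $2$ of the $8n-10$ $2$-relevant edges; the remaining $8n-12$ split into orbits of size $4$, for a total of $(8n-12)/4 + 1 = 2n-2 = 2(n-1)$. Finally, the $16n-10$ total edges of $\phi(T)$ decompose into $2$ diameters plus $16n-12$ edges of length $<2n$, yielding $(16n-12)/4 + 1 = 4n-2 = 2(2n-1)$ total edges on $\calCn$.

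There is no substantive obstacle here, since the heavy lifting has been carried out in the preceding sections via \Cref{2starparty}, \Cref{maximalityfromstars}, and \Cref{thm:annular-triangulations-as-periodic}. The only care point is to remember that the diameter orbit has size $2$ rather than the generic size $4$, and to use \Cref{e!knedge} to pin down that there is exactly one such orbit in $\phi(T)$; then the three equalities are immediate arithmetic.
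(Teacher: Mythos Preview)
Your proposal is correct and follows essentially the same orbit-counting approach as the paper: the paper records the relevant orbit sizes in the paragraph immediately preceding the corollary and then declares that the counts ``follow directly from the bijection'' of \Cref{thm:annular-triangulations-as-periodic}. Your write-up is simply more explicit, in particular in invoking \Cref{e!knedge} to pin down that exactly one $\ZZ/4\ZZ$-orbit of diameters occurs in $\phi(T)$---a detail the paper's terse proof leaves implicit but which is indeed needed for the edge counts to come out.
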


 We additionally note that the proof of \Cref{thm:annular-triangulations-as-periodic} can be easily generalized to a conditional proof for $k > 2$, assuming that \Cref{conj:stardecompk} holds. Accordingly, we propose the following conjectures.

\begin{conjecture}\label{conj:bijection-for-general-k}[cf.\ \cite[Remark~12~of~Part~III]{PHDThesis} and \Cref{remark:revisedstrong}]
    There is a bijection between $k$-triangulations on $\calCn$ and $n$-periodic $k$-triangulations on the $2nk$-gon.
\end{conjecture}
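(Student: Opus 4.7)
The plan is to mimic the proof of \Cref{thm:annular-triangulations-as-periodic} in the higher-$k$ setting. A preliminary step is to deduce \Cref{conj:maxliftingk} from \Cref{conj:stardecompk} exactly as \Cref{maximalityfromstars} is deduced from \Cref{2starparty}: given an edge $\overline e=[v_1,v_2]$ of length at most $kn$ not in $\oT$, it bisects some angle $\angle(u,v_1,w)$ of $\oT$, the assumed general-$k$ star decomposition produces a $k$-star whose vertex set contains $\{u, v_1, w\}$ in the correct cyclic order, and $\overline e$ together with the appropriate ``far'' edges of that star forms a $(k+1)$-crossing.

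Next, I would define $\phi$ verbatim from \Cref{thm:annular-triangulations-as-periodic}: an edge set $E$ on $\calCn$ lifts to $\overline{E}$ on $\ocalCn$, and $\phi(E) = \{[\alpha_{[i]},\alpha_{[j]}] \mid [\alpha_i,\alpha_j] \in \overline{E}\}$, where indices on the right are now reduced modulo $2kn$. At the level of arbitrary edge sets, $\phi$ manifestly bijects diagonal sets on $\calCn$ with $n$-periodic (equivalently, $\ZZ/2k\ZZ$-invariant) diagonal sets on the $2kn$-gon. To see that both $\phi$ and $\phi^{-1}$ preserve $(k+1)$-crossings, consider a $(k+1)$-crossing on $\ocalCn$ with endpoints ordered $u_1 \prec \cdots \prec u_{k+1} \prec v_1 \prec \cdots \prec v_{k+1}$. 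Applying \Cref{length>kn} twice gives
\[
v_{k+1} \;\le\; u_{k+1}+kn \;<\; v_1+kn \;\le\; u_1+2kn,
\]
so all $2(k+1)$ endpoints lie in an arc of length strictly less than $2kn$ and project to distinct vertices of the $2kn$-gon while preserving cyclic order. Since pairwise crossing is encoded entirely by the cyclic order of endpoints, crossings are preserved in both directions.

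Finally, I would combine these two ingredients. Given a $k$-triangulation $T$ on $\calCn$, the set $\phi(T)$ is $(k+1)$-crossing-free on the $2kn$-gon; for any diagonal $e\notin\phi(T)$, pick a lift $\overline e$ of length at most $kn$ and apply the now-available \Cref{conj:maxliftingk} to obtain a $(k+1)$-crossing in $\oT\cup\{\overline e\}$, which projects to a $(k+1)$-crossing in $\phi(T)\cup\{e\}$, establishing maximality. Conversely, any diagonal addable to $\phi^{-1}(\tilde T)$ on $\calCn$ would, via $\phi$, witness non-maximality of $\tilde T$ on the $2kn$-gon. The main obstacle is therefore not the bijection itself but \Cref{conj:stardecompk}; I expect that generalizing the rather intricate case analysis of \Cref{lemma:3crossing2crossing} and \Cref{2starparty} to arbitrary $k$ is the principal technical burden, after which the present conjecture follows formally.
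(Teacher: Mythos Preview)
The statement is a \emph{conjecture} in the paper, not a theorem; the paper offers no proof, only the remark (immediately preceding the conjecture) that ``the proof of \Cref{thm:annular-triangulations-as-periodic} can be easily generalized to a conditional proof for $k>2$, assuming that \Cref{conj:stardecompk} holds.'' Your proposal is exactly this conditional argument---deduce \Cref{conj:maxliftingk} from \Cref{conj:stardecompk}, then rerun the $\phi$-bijection verbatim---and you correctly flag \Cref{conj:stardecompk} (the general-$k$ star decomposition) as the sole missing ingredient, so your write-up matches the paper's own assessment precisely.
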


\begin{conjecture}\label{generalkpurity}[cf.\ \cite[Theorem~1.2.5~and~Corollary~1.2.6~of~Part~III]{PHDThesis}]
Any $k$-triangulation on $\calCn$ contains exactly $n-1$ $k$-stars, $k(n-1)$ $k$-relevant edges, and $k(2n-1)$ total edges.
\end{conjecture}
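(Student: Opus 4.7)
The plan is to parallel the proof of \Cref{Cnpurity} for $k=2$, transferring the classical Pilaud--Santos counts on the $2kn$-gon to the half-cylinder by orbit counting. Concretely, assuming \Cref{conj:bijection-for-general-k}, I obtain a bijection $\phi$ between $k$-triangulations on $\calCn$ and $n$-periodic $k$-triangulations on the $2kn$-gon. I would then recall from \cite{MR2721464} that every $k$-triangulation of the $2kn$-gon contains exactly $2k(n-1)$ $k$-stars, $k(2kn-2k-1)$ $k$-relevant edges, and $k(4kn-2k-1)$ total edges.

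Next I would determine the orbit sizes under the cyclic group $\ZZ/2k\ZZ$ generated by rotation by $\pi/k$, which shifts vertex labels by $n$ in the $2kn$-gon. A direct computation shows that an edge $[v,v+\ell]$ is fixed by a shift of $jn$ nontrivially only when $\ell = jn$ and $2jn \equiv 0 \pmod{2kn}$, forcing $\ell = kn$; so edges of length $kn$ have orbit size $k$ while all other edges have orbit size $2k$. For $k$-stars, which have $2k+1$ vertices, any rotation stabilizing such a star would act as a cyclic permutation of its vertices, forcing its order to divide both $2k$ and $2k+1$; since $\gcd(2k,2k+1)=1$, every $k$-star has orbit size exactly $2k$.

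With these orbit sizes in hand, the counts follow by arithmetic, using \Cref{e!knedge} (which is already proved for general $k$) to assert that each $k$-triangulation $T$ on $\calCn$ contains exactly one edge of length $kn$. The number of $k$-stars in $T$ is $2k(n-1)/2k = n-1$. The $k$ edges of length $kn$ in $\phi(T)$ all correspond to a single edge in $T$, while the remaining $k(4kn-2k-1)-k$ edges of $\phi(T)$ fall into orbits of size $2k$, contributing $(4kn-2k-2)/2 = 2kn-k-1$ edges of length strictly less than $kn$; adding back the unique edge of length $kn$ yields $k(2n-1)$ total edges. Replacing $k(4kn-2k-1)$ by $k(2kn-2k-1)$ in the same computation gives $k(n-1)$ $k$-relevant edges.

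The main obstacle is therefore not the counting, but rather the prerequisite \Cref{conj:bijection-for-general-k}, which itself rests on the star decomposition theorem (\Cref{conj:stardecompk}) for arbitrary $k$. The hard part is generalizing the case analysis of \Cref{lemma:philipslemma}, \Cref{lemma:lessthan2n}, and \Cref{lemma:3crossing2crossing} from $3$-crossings to $(k+1)$-crossings: one must exhibit the full $k$-star around a $v$-maximal edge one piece at a time while systematically replacing translated copies of candidate edges with genuine members of $\oT$, and both the number of configurations to check and the length bound $kn$ in \Cref{lemma:lessthan2n} grow with $k$. A clean inductive strategy on $k$, or on the number of translations appearing in a minimal forbidden crossing, seems to me the most promising route to push past this obstruction.
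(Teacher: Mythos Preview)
The statement you are addressing is a \emph{conjecture} in the paper, not a theorem; there is no proof to compare against. The paper only remarks, immediately before stating \Cref{conj:bijection-for-general-k} and \Cref{generalkpurity}, that ``the proof of \Cref{thm:annular-triangulations-as-periodic} can be easily generalized to a conditional proof for $k>2$, assuming that \Cref{conj:stardecompk} holds,'' and then records the orbit sizes (edges of length $kn$ have orbit $k$, all other edges and all $k$-stars have orbit $2k$) together with the Pilaud--Santos counts on the $2kn$-gon. Your proposal is precisely this conditional argument with the arithmetic spelled out, and your computations of orbit sizes and of the three counts are correct; you also correctly invoke \Cref{e!knedge}, which the paper does prove for general $k$, to handle the single short orbit. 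Your diagnosis of the obstacle---that everything reduces to the star decomposition/maximal lifting theorem for general $k$, and that the difficulty lies in extending the translation-elimination arguments of \Cref{lemma:philipslemma}--\Cref{lemma:3crossing2crossing} to $(k+1)$-crossings---matches the paper's own assessment. In short, your proposal is exactly the route the paper gestures at, and it is not a proof of \Cref{generalkpurity} but a correct reduction of it to \Cref{conj:stardecompk} (equivalently \Cref{conj:maxliftingk}).
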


\section{Chevron pipe dreams}\label{sec:chev}

In the 2010s, Vincent Pilaud and Chrisitan Stump independently established a connection between pipe dreams and multitriangulations \cite[Section~4.1.4]{PilaudThesis}, \cite[Theorem~2.1]{STUMP20111794} \cite[Section~7]{PP12}. Starting from a $k$-triangulation on the $n$-gon, the following is the construction of the pipe dream in the $(n-1)\times (n-1)$  staircase polyomino shape given by them:
Label the rows $n,n-1,\dots, 2$ from top to bottom and the columns $1,2,\dots, n-1$ from left to right.
In each box $(i,j)$ where there is an edge between vertices $i$ and $j$ in the $k$-triangulation, place a bump tile \begin{mosaic}{0.5}
    \tiJF \\
\end{mosaic} \ . In all other boxes, 
place a crossing tile \begin{mosaic}{0.5}
    \tiX \\
\end{mosaic}. \ This process yields a natural bijection between $k$-triangulations on the $n$-gon and reduced pipe dreams for the permutation $\pi_{n,k} := [1, \dots, k, n - k, n - k - 1, \dots, k + 1] \in \mathfrak{S}_{n - k}.$ Equivalently, these are pipe dreams in which no pair of pipes crosses twice and the pipe which enters the left boundary in the $i$-th row from the top exits the top boundary in the $\pi_{n,k}(i)$-th column from the left. For clarity, we call them \emph{staircase pipe dreams}.

In \Cref{chevronconstruction}, we construct \emph{chevron pipe dreams} from staircase pipe dreams associated with $k$-triangulations. This new model maintains key properties of the staircase pipe dream while better capturing possible symmetries of $k$-triangulations, allowing us to show that edges of $2$-triangulations on the half-cylinder can be flipped uniquely in \Cref{sec:regular}. The readers could verify that a similar idea also appears in \cite[Figure~1.12 of~~Part~III]{PHDThesis}.

In the following text, we abbreviate northeast as NE, southeast as SE, southwest as SW, and northwest as NW.

\begin{definition}\label{chevronconstruction}
The \emph{chevron pipe dream} associated with a $k$-triangulation on the $2n$-gon, denoted $\Chev_{2n,k},$ is obtained by applying the following cutting and gluing steps to the staircase pipe dream:

\begin{itemize}
    
    \item[Step 1:] Remove the NW-most $k$ pipes and delete any tiles emptied by this operation. From the NE region of the diagram, separate the largest possible inverted pyramid from the remainder of the pipe dream. The separated inverted pyramid has have $2n-5$ tiles in its top row.
    \item[Step 2:] Replace the separated inverted pyramid with its mirror image, and then rotate it $\frac{\pi}{2}$ radians clockwise.
    \item[Step 3:] Reattach the separated pyramid so that the base of the pyramid is centered on the left edge of the remainder. This attachment guarantees that the endpoint of a pipe on the top endpoint of a pipe on the staircase pipe dream reattaches to the endpoint of the same pipe on the leftmost boundary. 
    \item[Step 4:] 
    The figure from Step 3 should have two boxes in its topmost row and two boxes in its rightmost column. Draw a vertical line separating the two topmost boxes, and a horizontal line separating the two rightmost boxes. These two lines separate out a triangle in the NE region of the diagram. Replace the separated triangle with its mirror image.
    \item[Step 5:] Reattach the separated triangle along the SW-boundary of the remaining shape so that the bottom edge of the leftmost square coincides with the top edge of the triangle. This procedure guarantees that the endpoint on the NE boundary of a pipe on the shape resulting from Step 3 reattaches to the endpoint of the same point on the SW boundary.
    
\end{itemize}
For clarity, we additionally illustrate the result of each of these steps as applied to a $3$-periodic $2$-triangulation on the $12$-gon.
\end{definition}

\begin{figure}[H]
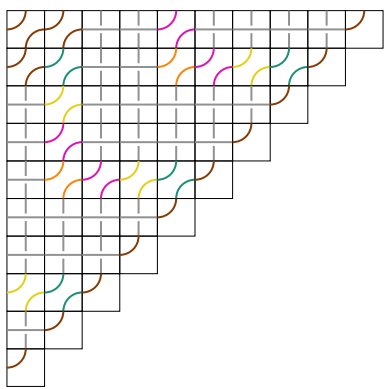
\label{fig:regular-pd}
\centering
    \begin{mosaic}{0.5}
    \Sbrown\& \Sbrown\& \tiX\& \tiX\& \Spink\& \tiX\& \tiX\& \tiX\& \tiX\& \tiJ \\ 
    \Sbrown\& \Steal\& \tiX \& \tiX\& \Sorange\& \Spink\& \Syellow\& \Steal\& \tiJ\\
    \tiX\& \Syellow\& \tiX\& \tiX\& \tiX\& \tiX\& \tiX\& \tiJ \\
    \tiX\& \Spink\& \tiX\& \tiX\& \tiX\& \tiX\& \tiJ \\
    \tiX\& \Sorange\& \Spink\& \Syellow\& \Steal\& \tiJ \\
    \tiX\& \tiX\& \tiX\& \tiX\& \tiJ \\ 
    \tiX\& \tiX\& \tiX\& \tiJ \\ 
    \Syellow\& \Steal\& \tiJ \\
    \tiX\& \tiJ \\
    \tiJ \\
    \end{mosaic}
    \caption{The staircase pipe dream corresponding to the $3$-periodic $2$-triangulation on a $12$-gon in \Cref{3periodic2tri}.}
\end{figure}

\begin{figure}[H]
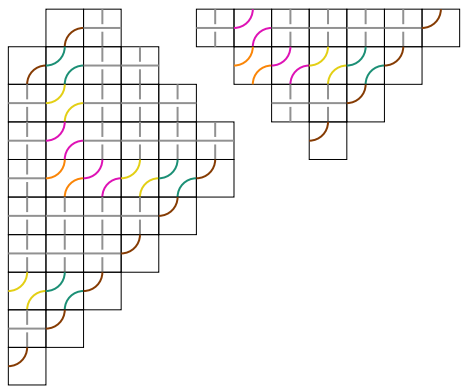

    \centering
    \begin{mosaic}{0.5}
    \& \tiF\& \tiX\& \& \& \tiX\& \Spink\& \tiX\& \tiX\& \tiX\& \tiX\& \tiJ \\ 
    \tiF\& \Steal\& \tiX \& \tiX\& \& \& \Sorange\& \Spink\& \Syellow\& \Steal\& \tiJ  \\ 
    \tiX\& \Syellow\& \tiX\& \tiX\& \tiX\& \& \& \tiX\& \tiX\& \tiJ  \\ 
    \tiX\& \Spink\& \tiX\& \tiX\& \tiX\& \tiX\& \& \& \tiJ  \\
    \tiX\& \Sorange\& \Spink\& \Syellow\& \Steal\& \tiJ  \\
    \tiX\& \tiX\& \tiX\& \tiX\& \tiJ  \\ 
    \tiX\& \tiX\& \tiX\& \tiJ  \\ 
    \Syellow\& \Steal\& \tiJ  \\
    \tiX\& \tiJ  \\
    \tiJ  \\
\end{mosaic}
    \caption{Step 1: The NW-most $2$ pipes are pruned and the corner box is removed. The NE inverted pyramid is separated from the remainder of the pipe dream}
    \label{fig:prune}
\end{figure}

\begin{figure}[H]
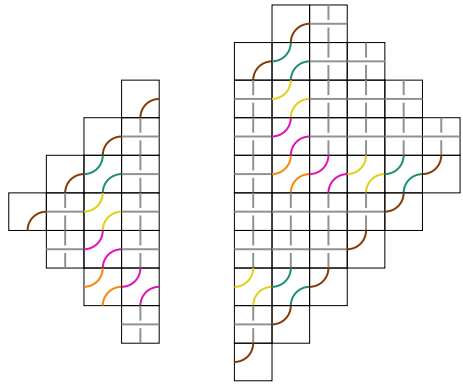

    \centering
    \begin{mosaic}{0.5}
\& \& \& \& \& \&   \& \tiF\& \tiX \& \& \& \& \& \& \& \& \\
\& \& \& \& \& \&  \tiF\&\Steal\&\tiX\&\tiX \& \& \& \& \& \\
    \& \& \& \tiF \& \& \& \tiX\& \Syellow\& \tiX\& \tiX\& \tiX \& \& \\
    \& \& \tiF \&\tiX \& \& \& \tiX\& \Spink\& \tiX\& \tiX\& \tiX\& \tiX\& \\
    \& \tiF \&\Steal \&\tiX \& \& \& \tiX\& \Sorange\& \Spink\& \Syellow\& \Steal\& \tiJ\\
    \tiF \&\tiX \&\Syellow \&\tiX \& \& \& \tiX\& \tiX\& \tiX\& \tiX\& \tiJ \\
    \& \tiX \&\Spink \&\tiX \& \& \& \tiX\& \tiX\& \tiX\& \tiJ\\
    \& \& \Sorange \&\Spink \& \& \& \Syellow\& \Steal\& \tiJ  \\
    \& \& \& \tiX \& \& \& \tiX\& \tiJ\\ 
    \& \& \& \& \& \&  \tiJ \\
\end{mosaic}
    \caption{Step 2: The separated inverted pyramid is replaced with its mirror image and rotated.}
    \label{fig:rotate and flip}
\end{figure}
\begin{figure}[H]
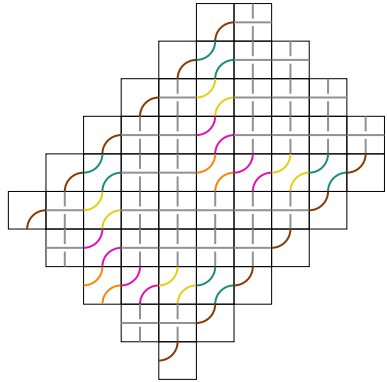

    \centering
    \begin{mosaic}{0.5}
\& \& \& \&    \& \tiF\& \tiX \& \& \& \& \& \& \& \& \\
\& \& \& \&   \tiF\&\Steal\&\tiX\&\tiX \& \& \& \& \& \\
    \& \& \& \tiF\& \tiX\& \Syellow\& \tiX\& \tiX\& \tiX \& \& \\
    \& \& \tiF \&\tiX  \& \tiX\& \Spink\& \tiX\& \tiX\& \tiX\& \tiX\& \\
    \& \tiF \&\Steal \&\tiX  \& \tiX\& \Sorange\& \Spink\& \Syellow\& \Steal\& \tiJ\\
    \tiF \&\tiX \&\Syellow \&\tiX  \& \tiX\& \tiX\& \tiX\& \tiX\& \tiJ \\
    \& \tiX \&\Spink \&\tiX  \& \tiX\& \tiX\& \tiX\& \tiJ\\
    \& \& \Sorange \&\Spink  \& \Syellow\& \Steal\& \tiJ  \\
    \& \& \& \tiX  \& \tiX\& \tiJ\\ 
    \& \& \&  \&  \tiJ \\
\end{mosaic}
    \caption{Step 3: The separated pyramid is reattached.}
    \label{fig:reattach}
\end{figure}

\begin{figure}[H]
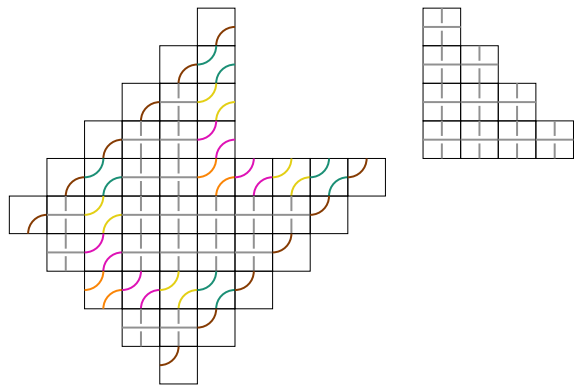

    \centering
    \begin{mosaic}{0.5}
\& \& \& \&    \& \tiF\& \& \& \& \& \& \tiX \& \& \& \& \& \& \& \& \\
\& \& \& \&   \tiF\&\Steal\& \& \& \& \& \& \tiX\&\tiX \& \& \& \& \& \\
    \& \& \& \tiF\& \tiX\& \Syellow\& \& \& \& \& \& \tiX\& \tiX\& \tiX \& \& \\
    \& \& \tiF \&\tiX  \& \tiX\& \Spink\& \& \& \& \& \& \tiX\& \tiX\& \tiX\& \tiX\& \\
    \& \tiF \&\Steal \&\tiX  \& \tiX\& \Sorange\& \Spink\& \Syellow\& \Steal\& \tiJ\\
    \tiF \&\tiX \&\Syellow \&\tiX  \& \tiX\& \tiX\& \tiX\& \tiX\& \tiJ \\
    \& \tiX \&\Spink \&\tiX  \& \tiX\& \tiX\& \tiX\& \tiJ\\
    \& \& \Sorange \&\Spink  \& \Syellow\& \Steal\& \tiJ  \\
    \& \& \& \tiX  \& \tiX\& \tiJ\\ 
    \& \& \&  \&  \tiJ \\
\end{mosaic}
    \caption{Step 4: The NE triangle is separated and replaced with its mirror image.}
    \label{fig:cut-again}
\end{figure}

\begin{figure}[H]
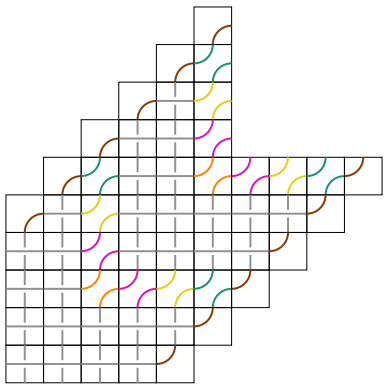

    \centering
    \begin{mosaic}{0.5}
    \& \& \& \& \& \tiF\& \& \& \& \\
    \& \& \& \& \tiF\& \Steal \& \& \& \& \\
    \& \& \& \tiF\& \tiX\& \Syellow\& \& \& \& \\
    \& \& \tiF\& \tiX\& \tiX\& \Spink\& \& \& \& \\
    \& \tiF\& \Steal\& \tiX\& \tiX\& \Sorange\& \Spink\& \Syellow\& \Steal\& \tiJ \\
    \tiF\& \tiX\& \Syellow\& \tiX\& \tiX\& \tiX\& \tiX\& \tiX\& \tiJ \\
    \tiX\& \tiX\& \Spink\& \tiX\& \tiX\& \tiX\& \tiX\& \tiJ \\
    \tiX\& \tiX\& \Sorange\& \Spink\& \Syellow\& \Steal\& \tiJ \\
    \tiX\& \tiX\& \tiX\& \tiX\& \tiX\& \tiJ \\
    \tiX\& \tiX\& \tiX\& \tiX\& \tiJ \\
\end{mosaic}

    \caption{Step 5: The separated triangle is reattached.}
    \label{fig:reattach2}
\end{figure}

\begin{lemma}
The chevron pipe dream $\Chev_{2n,k}$ is reduced, and moreover, each pair of pipes crosses exactly once.

\end{lemma}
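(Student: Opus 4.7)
The plan is to argue that each step of the chevron construction preserves the pairwise crossing multiplicities of the pipes, so that $\Chev_{2n,k}$ inherits the required property from the underlying staircase pipe dream.

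By Stump's theorem, the staircase pipe dream associated with a $k$-triangulation on the $2n$-gon is a reduced pipe dream for $\pi_{2n,k} = [1,\ldots,k,2n-k,\ldots,k+1]$. In such a reduced pipe dream the pipes indexed $1,\ldots,k$ are fixed points of the permutation and therefore cross no other pipe, while the remaining $2n-2k$ pipes realize the reversal on $\{k+1,\ldots,2n-k\}$ and hence pairwise cross exactly once. Step 1 excises precisely these $k$ fixed pipes, together with the tiles that they alone occupied, leaving a diagram in which every pair of the $2n-2k$ remaining pipes crosses exactly once.

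For Steps 2--5 I would observe that each operation has the form: (i) separate the current diagram along a cut that does not pass through the interior of any tile; (ii) apply a rigid motion (a reflection and possibly a $\pi/2$ rotation) to one of the two pieces; (iii) reattach the pieces so that each pipe endpoint on the cut boundary is re-identified with the corresponding endpoint of the same pipe, as prescribed by the endpoint-matching rules explicitly stated in Steps 3 and 5. A rigid motion sends crossing tiles to crossing tiles and elbow tiles to elbow tiles, so the planar graph of pipe segments inside the moved piece is preserved up to a rigid relabeling of its boundary points. The reattachment rule then ensures that pipes reconnect combinatorially as they did before, so each pair of pipes in the reassembled diagram has exactly the same number of crossings as before the move. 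Iterating over Steps 2--5 therefore preserves the conclusion reached at the end of Step 1.

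The main obstacle is checking that each prescribed cut truly lies between tiles and that each reattachment provides a bijective matching between pipe endpoints on the two mating boundaries (so pipes reconnect uniquely and consistently). For the inverted pyramid in Steps 1--3 this means verifying that, after the $k$ NW-most pipes are pruned, the top edge of the pyramid and the left edge of the remaining shape carry the same number of pipe endpoints; for the triangle in Steps 4--5 one verifies analogously that the two straight cuts through the topmost and rightmost pairs of boxes produce boundaries of matching length, and that the prescribed alignment induces the identity matching on pipe endpoints. Once these bookkeeping checks are in hand, the rigid-motion argument applies uniformly, and both reducedness and the exact-one-crossing-per-pair property of $\Chev_{2n,k}$ follow immediately from the Step-1 base case.
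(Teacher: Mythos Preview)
Your proposal is correct and follows essentially the same approach as the paper: the paper's proof also argues that after pruning the $k$ NW-most pipes every remaining pair crosses exactly once, and that the two cut-and-glue operations preserve pipes and their crossing information because endpoints of the same pipe are re-identified. Your write-up simply makes explicit the reasons (fixed pipes of $\pi_{2n,k}$, rigid motions preserve tile types, endpoint-matching yields a bijection) that the paper leaves implicit.
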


\begin{proof}
    The cutting and gluing procedure outlined in \Cref{chevronconstruction} consists of an initial pruning and two cutting and gluing operations. As the staircase pipe dream is reduced and has permutation $\pi_{2n,k},$ after the initial pruning each pair of pipes crosses exactly once. During the cutting and gluing operations, we reattach a separated component in such a way that endpoints of the same pipes are glued. Thus, the construction preserves pipes and their crossing information.
\end{proof}

As is the case in \cite{STUMP20111794}, each pipe in our chevron pipe dream $\Chev_{2n,k}$ corresponds to a $k$-star in the $k$-triangulation on the $2n$-gon.
We use notation for $k$-stars in the $k$-triangulation to refer to pipes in the chevron pipe dream.

The staircase pipe dream corresponding to a $k$-triangulation on a $2n$-gon has rows labeled $2n,2n-1,\dots, 1$ from top to bottom and columns $1,2,\dots, 2n-1$ from left to right. Extending this to all integers so that all rows and columns of the infinite grid are labeled induces a labeling of tiles on the chevron pipe dream. For the purposes of this labeling, we consider the separated inverted pyramid and triangle to move on the grid while the remainder of the pipe dream is static for the two cutting and gluing operations. 

A key property of the chevron polyomino shape is its symmetry about the reflection axis, which we formally define here:

\begin{definition}
    The \emph{reflection axis} of a chevron pipe dream $\Chev_{2n,k}$ is the axis of symmetry of the chevron polyomino shape. The reflection axis runs SW to NE through tiles labeled $[i,i+n].$ 
\end{definition}

Let $e=[i,j]$ denote an edge of the $2n$-gon and $e+n$ its image under rotation by $\pi.$ We notice that on the chevron pipe dream, the tile corresponding to $e+n$ is the image of the tile corresponding to $e$ after reflecting across the reflection axis. 

The chevron shape was chosen additionally as to be consistent with the bijection between $k$-triangulations on the $2n$-gon invariant under rotation by $\pi$ radians and facets of the multi-cluster complex $\Delta_c^k(B_{n-k})$ as shown in \cite[Theorem~2.10]{CeballosLabbeStump}. We note the following.

\begin{proposition}
A $k$-triangulation on the $2n$-gon is invariant under rotation by $\pi$ if and only if its associated chevron pipe dream is symmetric with respect to the reflection axis. 
\end{proposition}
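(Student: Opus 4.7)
The plan is to derive this proposition as a direct consequence of the tile correspondence stated in the paragraph immediately preceding it, namely that on $\Chev_{2n,k}$ the tile corresponding to the edge $e + n$ is the image of the tile corresponding to $e$ under reflection across the reflection axis. Given this correspondence, the proposition reduces to a double implication between two tile-by-tile conditions, one on the $k$-triangulation $T$ and one on $\Chev_{2n,k}$.

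For the forward direction, suppose $T$ is invariant under rotation by $\pi$; equivalently, $e \in T$ if and only if $e + n \in T$ for every edge $e$ of the $2n$-gon. By the construction of the staircase pipe dream (whose tile labels are inherited by the chevron pipe dream under the extended infinite-grid labeling), the chevron places a bump tile at the position labeled $e$ precisely when $e \in T$ and a crossing tile otherwise. Combined with the tile correspondence, this forces the tile at position $e$ and the tile at its reflection (which carries the label $e + n$) to have the same type, so $\Chev_{2n,k}$ is symmetric with respect to the reflection axis.

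Conversely, if $\Chev_{2n,k}$ is symmetric across the reflection axis, then applying the tile correspondence in the other direction shows that the tile at $e$ agrees in type with the tile at $e + n$ for every edge $e$. Hence $e \in T$ if and only if $e + n \in T$, so $T$ is invariant under rotation by $\pi$.

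The main point to verify is the tile correspondence itself. Concretely, the reflection axis passes through tiles labeled $(i, i + n)$, so in the extended grid it is the line $j - i = n$. Reflection across this line sends $(i, j)$ to $(j - n, i + n)$; since vertex labels are taken modulo $2n$, we have $j - n \equiv j + n \pmod{2n}$, so the reflected tile carries the edge label $[i + n, j + n] = e + n$. One should also check that the two cutting-and-gluing steps of \Cref{chevronconstruction} preserve this labeling scheme, which follows from the prescription that the separated pieces move on the extended grid while the remainder is static, together with the observation (already used to establish that $\Chev_{2n,k}$ is reduced) that the construction preserves the identities of pipes and their crossings. This compatibility is the only mild obstacle; once it is in place, the biconditional is immediate.
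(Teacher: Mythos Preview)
Your proposal is correct and is exactly the argument the paper has in mind: the proposition is stated without proof, immediately after the observation that the tile for $e+n$ is the reflection of the tile for $e$, and your two implications simply make this explicit. Your additional verification of the tile correspondence (reflection across $j-i=n$ sends $(i,j)$ to $(j-n,i+n)\equiv(j+n,i+n)\pmod{2n}$) fills in a detail the paper leaves to the reader.
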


\begin{definition} \label{periodicpipedef}We say a chevron pipe dream corresponding to a $k$-triangulation on the $2kn$-gon" is \emph{$n$-periodic} if all $(i_1,j_1),(i_2,j_2)$ with $i_1\equiv i_2+\ell n \mod 2kn$ and $j_1\equiv 
 j_2+\ell n \mod 2kn$ for some integer $\ell$ have the same filling (both \begin{mosaic}{0.5}
    \tiJF \\
\end{mosaic} \ or both \begin{mosaic}{0.5}
    \tiX \\
\end{mosaic}). 
\end{definition}

   \begin{lemma}\label{periodicbijection}

A $k$-triangulation on the $2nk$-gon is $n$-periodic if and only if the associated chevron pipe dream under \Cref{chevronconstruction} is $n$-periodic.
   \end{lemma}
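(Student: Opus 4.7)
The plan is to reduce both sides of the biconditional to a single structural observation about $\Chev_{2kn,k}$: \emph{every tile at infinite-grid label $(i,j)$ is a bump tile if and only if $\{i \bmod 2kn,\; j \bmod 2kn\}$ is an edge of $T$}. Granted this, the equivalence of the two notions of $n$-periodicity is transparent, as both reduce to invariance of the membership relation $\{a,b\} \in T$ under $(a,b) \mapsto (a + \ell n,\, b + \ell n) \pmod{2kn}$.

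To establish the structural observation, I would first note that on the static portion of the chevron (the tiles unaffected by Steps~2--5 of \Cref{chevronconstruction}) the claim is immediate: the chevron label coincides with the original staircase label, and by the defining property of the staircase pipe dream the tile is a bump exactly when the corresponding edge lies in $T$. For the moved tiles---those inside the inverted pyramid of Steps~1--3 and inside the triangle of Steps~4--5---I would compute, for each piece, the affine map on infinite-grid coordinates obtained by composing the mirror reflection, the rotation, and the translation dictated by the reattachment rule. The crucial point is that the reattachment rules in the construction are explicitly designed so that pipe endpoints glue to endpoints of the \emph{same} pipes, which pins the translational component of the transformation down to a multiple of $2kn$ in each coordinate. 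One then checks directly that the resulting affine map sends each preimage coordinate to an image coordinate congruent to it modulo $2kn$, possibly after a coordinate swap (which is harmless since an edge is an unordered pair of endpoints); hence the structural observation extends to all moved tiles as well.

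With the structural observation in hand, both directions of the lemma are formal. If $T$ is $n$-periodic, then any two chevron tiles with labels $(i_1,j_1)$ and $(i_2,j_2)$ satisfying $i_1 \equiv i_2 + \ell n$ and $j_1 \equiv j_2 + \ell n$ modulo $2kn$ correspond to the same pair of residues modulo $2kn$, hence carry the same filling; this is exactly \Cref{periodicpipedef}. Conversely, if $\Chev_{2kn,k}$ is $n$-periodic, then for any pair of vertices $\{a,b\}$ of the $2kn$-gon I would pick chevron tiles with labels $(i,j) \equiv (a,b)$ and $(i',j') \equiv (a+\ell n,\, b+\ell n) \pmod{2kn}$; these are related by the shift in \Cref{periodicpipedef}, so their fillings agree, and by the structural observation $\{a,b\} \in T$ if and only if $\{a + \ell n,\, b + \ell n\} \in T$. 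The main obstacle is the explicit coordinate tracking in the second paragraph: the reflections, rotations, and reattachment translations must all be handled on the infinite grid with care, especially for the rotated pyramid in Steps~1--3 where the two coordinates are swapped; no new idea is required beyond patient bookkeeping, but this is where errors are most likely.
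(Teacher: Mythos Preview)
Your proposal is correct and follows essentially the same route as the paper: both arguments reduce to verifying that the cutting-and-gluing operations of \Cref{chevronconstruction} move each tile $(i,j)$ to a tile whose label is congruent to $(i,j)$ modulo $2kn$, so that the bump condition at a chevron tile reads off the membership of $\{i \bmod 2kn,\ j \bmod 2kn\}$ in $T$. The paper's proof states this as a one-line ``simple verification'' without tracking the affine maps explicitly, while you propose carrying out the bookkeeping piece by piece (static region, pyramid, triangle) and note the possible coordinate swap; this extra care is reasonable but not a different strategy.
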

   \begin{proof}
This follows from a simple verification that the two cutting and gluing operations of \Cref{periodicpipedef} applied to a staircase pipe dream for a $k$-triangulation on the $2kn$-gon preserve labeling of tiles, modulo $2kn.$ In particular, a tile $(i,j)$ on the staircase pipe dream either remains static or is moved to another tile $(i',j'),$ where $i'=i+\ell\cdot(2kn)$ and $j'=j+\ell\cdot(2kn).$ Thus, $n$-periodicity on $k$-triangulations on the $2nk$-gon is exactly equivalent to $n$-periodicity on the chevron pipe dream. The bijection follows.
    \end{proof}

\section{The flip graph of rotationally symmetric $2$-triangulations}\label{sec:regular}

Let $\tilde{E}_{2kn,n}$ denote the set of equivalence classes of edges of the $2kn$-gon of the convex $n$-gon after identification via rotation by $\frac{\pi}{k}.$ Each element of $\tilde{E}_{2kn,n}$ is an $n$-periodic edge, namely, a set $\tilde{e}$ containing an edge $e$ and its rotational copies by integer multiples of $\frac{\pi}{k}.$ We identify an $n$-periodic $k$-triangulation on the $2kn$-gon $T$ with a subset $\tilde{T}\subset \tilde{E}_{2kn,n}.$ 
When $k=2$, elements of $\tilde{E}_{4n,n}$ correspond to edges of $\calCn$ as described in \Cref{thm:annular-triangulations-as-periodic}, and $\tilde T$ corresponds to a $2$-triangulation on $\calCn.$

By \Cref{periodicbijection}, any $\tilde{e}\in \tilde{E}_{2kn,n}$ naturally corresponds to a set of tiles on the chevron polyomino $\Chev_{2kn,k}$ which respect the $n$-periodicity conditions outlined in \Cref{periodicpipedef}.

In the following, let $T$ be an $n$-periodic 2-triangulation on the $4n$-gon, and let $\tilde{e} \in \tilde{T}$ be a set of edges containing a 2-relevant edge $e$ and its rotational copies, which we call a $2$-relevant $n$-periodic edge.

\begin{figure}[H]
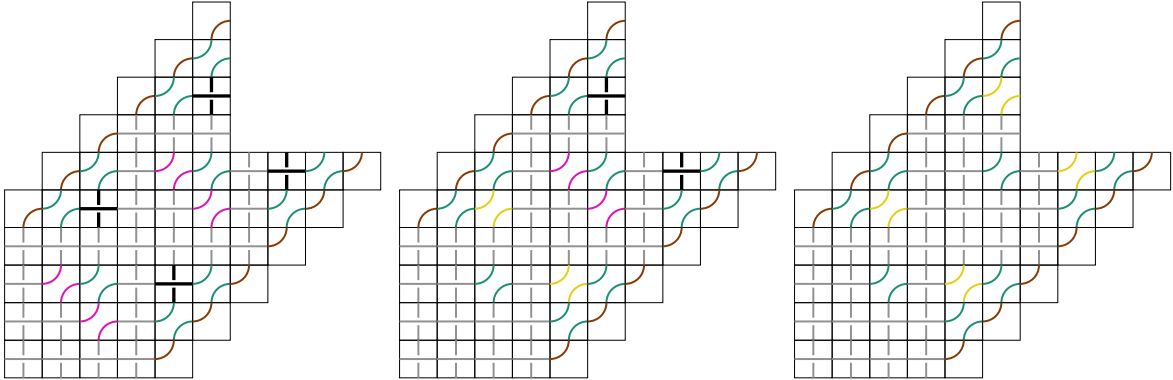

    \centering
   \begin{mosaic}{0.5}
    \& \& \& \& \& \tiF \& \& \& \& \\ 
    \& \& \& \& \tiF \& \Steal \& \& \& \& \\ 
    \& \& \& \tiF \& \Steal \& \tiXthick \& \& \& \&\\ 
    \& \& \tiF\& \tiX\& \tiX\& \tiX\& \& \& \& \& \\ 
    \& \tiF\& \Steal\& \tiX\& \Spink\& \Steal\& \tiX\& \tiXthick\& \Steal\& \tiJ \\ 
    \tiF\& \Steal\& \tiXthick\& \tiX\& \tiX\& \Spink\& \tiX\& \Steal\& \tiJ \\ 
    \tiX\& \tiX\& \tiX\& \tiX\& \tiX\& \tiX\& \tiX\& \tiJ \\
    \tiX\& \Spink\& \Steal\& \tiX\& \tiXthick\& \Steal\& \tiJ\& \\ 
    \tiX\& \tiX\& \Spink\& \tiX\& \Steal\& \tiJ\& \\ 
    \tiX\& \tiX\& \tiX\& \tiX\& \tiJ \\ 
    \end{mosaic}\begin{mosaic}{0.5}
    \& \& \& \& \& \tiF\& \& \& \& \\ 
    \& \& \& \& \tiF\& \Steal\& \& \& \& \\ 
    \& \& \& \tiF\& \Steal\& \tiXthick\& \& \& \&\\ 
    \& \& \tiF\& \tiX\& \tiX\& \tiX\& \& \& \& \& \\ 
    \& \tiF\& \Steal\& \tiX\& \Spink\& \Steal\& \tiX\& \tiXthick\& \Steal\& \tiJ \\ 
    \tiF\& \Steal\& \Syellow\& \tiX\& \tiX\& \Spink\& \tiX\& \Steal\& \tiJ \\ 
    \tiX\& \tiX\& \tiX\& \tiX\& \tiX\& \tiX\& \tiX\& \tiJ \\
    \tiX\& \tiX\& \Steal\& \tiX\& \Syellow\& \Steal\& \tiJ\& \\ 
    \tiX\& \tiX\& \tiX\& \tiX\& \Steal\& \tiJ\& \\ 
    \tiX\& \tiX\& \tiX\& \tiX\& \tiJ \\ 
    \end{mosaic} \begin{mosaic}{0.5}
    \& \& \& \& \& \tiF\& \& \& \& \\ 
    \& \& \& \& \tiF\& \Steal\& \& \& \& \\ 
    \& \& \& \tiF\& \Steal\& \Syellow\& \& \& \&\\ 
    \& \& \tiF\& \tiX\& \tiX\& \tiX\& \& \& \& \& \\ 
    \& \tiF\& \Steal\& \tiX\& \tiX\& \Steal\& \tiX\& \Syellow\& \Steal\& \tiJ \\ 
    \tiF\& \Steal\& \Syellow\& \tiX\& \tiX\& \tiX\& \tiX\& \Steal\& \tiJ \\ 
    \tiX\& \tiX\& \tiX\& \tiX\& \tiX\& \tiX\& \tiX\& \tiJ \\
    \tiX\& \tiX\& \Steal\& \tiX\& \Syellow\& \Steal\& \tiJ\& \\ 
    \tiX\& \tiX\& \tiX\& \tiX\& \Steal\& \tiJ\& \\ 
    \tiX\& \tiX\& \tiX\& \tiX\& \tiJ \\ 
    \end{mosaic}
    \caption{(Left) The chevron pipe dream associated with a $3$-periodic $2$-triangulation $T$ of the $12$-gon. We color $\tilde e$ in pink and $\tilde{f}$ in bold black. The pipe that passes through the NW bump of $e_0$ passes through the SW bump of $e_0+n.$ (Middle) The chevron pipe dream associated with $(T\setminus \{e_0,e_0+2n\})\cup\{f_0,f_0+2n\}.$ Since $f$ lies SW of $e_0+n$ and $f+2n$ lies SW of $e_0+2n,$ the pipes which bump at $e_0+n$ and $e_0+3n$ still cross at $f+n$ and $f+3n$ respectively. (Right) The chevron pipe dream associated with $(T\setminus \tilde e)\cup \tilde f.$ \label{3sharks}}

\end{figure}

\begin{figure}[H]
    \centering
   \begin{tikzpicture}[auto=center,scale=2]

    \draw[line width=0.5mm,steal] (0, 1) -- (0, -1);
    \draw[line width=0.5mm,steal] (1, 0) -- (-1, 0);

    \draw[line width=0.5mm,steal] (0, 1) -- (1, 0);
    \draw[line width=0.5mm,steal] (1, 0) -- (0, -1);
    \draw[line width=0.5mm,steal] (0, -1) -- (-1, 0);
    \draw[line width=0.5mm,steal] (-1, 0) -- (0, 1);

    \draw[line width=0.5mm,steal] (0.866, 0.5) -- (0.5, -0.866);
    \draw[line width=0.5mm,steal] (0.5, -0.866) -- (-0.866, -0.5);
    \draw[line width=0.5mm,steal] (-0.866, -0.5) -- (-0.5, 0.866);
    \draw[line width=0.5mm,steal] (-0.5, 0.866) -- (0.866, 0.5);

    \draw[line width=0.5mm,sgrey] (0.866, 0.5) -- (0.866, -0.5);
    \draw[line width=0.5mm,sgrey] (-0.5, 0.866) -- (0.5, 0.866);
    \draw[line width=0.5mm,sgrey] (-0.866, 0.5) -- (-0.866, -0.5);
    \draw[line width=0.5mm,sgrey] (-0.5, -0.866) -- (0.5, -0.866);

    \draw[line width=0.5mm,sgrey] (0.5, 0.866) -- (1, 0);
    \draw[line width=0.5mm,sgrey] (-0.866, 0.5) -- (0, 1);
    \draw[line width=0.5mm,sgrey] (-0.5, -0.866) -- (-1, 0);
    \draw[line width=0.5mm,sgrey] (0, -1) -- (0.866, -0.5);

    \draw[line width=0.5mm,sgrey] (0, 1) -- (0.866, 0.5);
    \draw[line width=0.5mm,sgrey] (-1, 0) -- (-0.5, 0.866);
    \draw[line width=0.5mm,sgrey] (0, -1) -- (-0.866, -0.5);
    \draw[line width=0.5mm,sgrey] (0.5, -0.866) -- (1, 0);

    \draw[line width=0.5mm,sgrey] (0.866, 0.5) -- (0.5, 0.866);
    \draw[line width=0.5mm,sgrey] (0.5, 0.866) -- (0, 1);
    \draw[line width=0.5mm,sgrey] (0, 1) -- (-0.5, 0.866);
    \draw[line width=0.5mm,sgrey] (-0.5, 0.866) -- (-0.866, 0.5);
    \draw[line width=0.5mm,sgrey] (-0.866, 0.5) -- (-1, 0);
    \draw[line width=0.5mm,sgrey] (-1, 0) -- (-0.866, -0.5);
    \draw[line width=0.5mm,sgrey] (-0.866, -0.5) -- (-0.5, -0.866);
    \draw[line width=0.5mm,sgrey] (-0.5, -0.866) -- (0, -1);
    \draw[line width=0.5mm,sgrey] (0, -1) -- (0.5, -0.866);
    \draw[line width=0.5mm,sgrey] (0.5, -0.866) -- (0.866, -0.5);
    \draw[line width=0.5mm,sgrey] (0.866, -0.5) -- (1, 0);
    \draw[line width=0.5mm,sgrey] (1, 0) -- (0.866, 0.5);

    \draw[line width=0.5mm,spink] (0.866, 0.5) -- (-1, 0);
    \draw[line width=0.5mm,spink] (1, 0) -- (-0.866, -0.5);
    \draw[line width=0.5mm,spink] (0.5, -0.866) -- (0, 1);
    \draw[line width=0.5mm,spink] (0, -1) -- (-0.5, 0.866);

    \draw[thick,red] (0.866, 0.5) node  {$\bullet$};
    \draw[thick,Green] (0.5, 0.866) node {$\bullet$};
    \draw[thick,blue] (0, 1) node {$\bullet$};
    \draw[thick,red] (-0.5, 0.866) node {$\bullet$};
    \draw[thick,Green] (-0.866, 0.5) node {$\bullet$};
    \draw[thick,blue] (-1, 0) node {$\bullet$};
    \draw[thick,red] (-0.866, -0.5) node {$\bullet$};
    \draw[thick,Green] (-0.5, -0.866) node {$\bullet$};
    \draw[thick,blue] (0, -1) node {$\bullet$};
    \draw[thick,red] (0.5, -0.866) node {$\bullet$};
    \draw[thick,Green] (0.866, -0.5) node {$\bullet$};
    \draw[thick,blue] (1, 0) node {$\bullet$};
\end{tikzpicture}
\begin{tikzpicture}[auto=center,scale=2]

    \draw[line width=0.5mm,steal] (0, 1) -- (0, -1);
    \draw[line width=0.5mm,steal] (1, 0) -- (-1, 0);

    \draw[line width=0.5mm,steal] (0, 1) -- (1, 0);
    \draw[line width=0.5mm,steal] (1, 0) -- (0, -1);
    \draw[line width=0.5mm,steal] (0, -1) -- (-1, 0);
    \draw[line width=0.5mm,steal] (-1, 0) -- (0, 1);

    \draw[line width=0.5mm,steal] (0.866, 0.5) -- (0.5, -0.866);
    \draw[line width=0.5mm,steal] (0.5, -0.866) -- (-0.866, -0.5);
    \draw[line width=0.5mm,steal] (-0.866, -0.5) -- (-0.5, 0.866);
    \draw[line width=0.5mm,steal] (-0.5, 0.866) -- (0.866, 0.5);

    \draw[line width=0.5mm,sgrey] (0.866, 0.5) -- (0.866, -0.5);
    \draw[line width=0.5mm,sgrey] (-0.5, 0.866) -- (0.5, 0.866);
    \draw[line width=0.5mm,sgrey] (-0.866, 0.5) -- (-0.866, -0.5);
    \draw[line width=0.5mm,sgrey] (-0.5, -0.866) -- (0.5, -0.866);

    \draw[line width=0.5mm,sgrey] (0.5, 0.866) -- (1, 0);
    \draw[line width=0.5mm,sgrey] (-0.866, 0.5) -- (0, 1);
    \draw[line width=0.5mm,sgrey] (-0.5, -0.866) -- (-1, 0);
    \draw[line width=0.5mm,sgrey] (0, -1) -- (0.866, -0.5);

    \draw[line width=0.5mm,sgrey] (0, 1) -- (0.866, 0.5);
    \draw[line width=0.5mm,sgrey] (-1, 0) -- (-0.5, 0.866);
    \draw[line width=0.5mm,sgrey] (0, -1) -- (-0.866, -0.5);
    \draw[line width=0.5mm,sgrey] (0.5, -0.866) -- (1, 0);

    \draw[line width=0.5mm,sgrey] (0.866, 0.5) -- (0.5, 0.866);
    \draw[line width=0.5mm,sgrey] (0.5, 0.866) -- (0, 1);
    \draw[line width=0.5mm,sgrey] (0, 1) -- (-0.5, 0.866);
    \draw[line width=0.5mm,sgrey] (-0.5, 0.866) -- (-0.866, 0.5);
    \draw[line width=0.5mm,sgrey] (-0.866, 0.5) -- (-1, 0);
    \draw[line width=0.5mm,sgrey] (-1, 0) -- (-0.866, -0.5);
    \draw[line width=0.5mm,sgrey] (-0.866, -0.5) -- (-0.5, -0.866);
    \draw[line width=0.5mm,sgrey] (-0.5, -0.866) -- (0, -1);
    \draw[line width=0.5mm,sgrey] (0, -1) -- (0.5, -0.866);
    \draw[line width=0.5mm,sgrey] (0.5, -0.866) -- (0.866, -0.5);
    \draw[line width=0.5mm,sgrey] (0.866, -0.5) -- (1, 0);
    \draw[line width=0.5mm,sgrey] (1, 0) -- (0.866, 0.5);

    \draw[line width=0.5mm,spink] (0.866, 0.5) -- (-1, 0);
    \draw[line width=0.5mm,spink] (1, 0) -- (-0.866, -0.5);

    \draw[line width=0.5mm,syellow] (-0.866, -0.5) -- (0, 1);
    \draw[line width=0.5mm,syellow] (0.866, 0.5) -- (0, -1);

    \draw[thick,red] (0.866, 0.5) node  {$\bullet$};
    \draw[thick,Green] (0.5, 0.866) node {$\bullet$};
    \draw[thick,blue] (0, 1) node {$\bullet$};
    \draw[thick,red] (-0.5, 0.866) node {$\bullet$};
    \draw[thick,Green] (-0.866, 0.5) node {$\bullet$};
    \draw[thick,blue] (-1, 0) node {$\bullet$};
    \draw[thick,red] (-0.866, -0.5) node {$\bullet$};
    \draw[thick,Green] (-0.5, -0.866) node {$\bullet$};
    \draw[thick,blue] (0, -1) node {$\bullet$};
    \draw[thick,red] (0.5, -0.866) node {$\bullet$};
    \draw[thick,Green] (0.866, -0.5) node {$\bullet$};
    \draw[thick,blue] (1, 0) node {$\bullet$};
\end{tikzpicture}
\begin{tikzpicture}[auto=center,scale=2]

    \draw[line width=0.5mm,steal] (0, 1) -- (0, -1);
    \draw[line width=0.5mm,steal] (1, 0) -- (-1, 0);

    \draw[line width=0.5mm,steal] (0, 1) -- (1, 0);
    \draw[line width=0.5mm,steal] (1, 0) -- (0, -1);
    \draw[line width=0.5mm,steal] (0, -1) -- (-1, 0);
    \draw[line width=0.5mm,steal] (-1, 0) -- (0, 1);

    \draw[line width=0.5mm,steal] (0.866, 0.5) -- (0.5, -0.866);
    \draw[line width=0.5mm,steal] (0.5, -0.866) -- (-0.866, -0.5);
    \draw[line width=0.5mm,steal] (-0.866, -0.5) -- (-0.5, 0.866);
    \draw[line width=0.5mm,steal] (-0.5, 0.866) -- (0.866, 0.5);

    \draw[line width=0.5mm,sgrey] (0.866, 0.5) -- (0.866, -0.5);
    \draw[line width=0.5mm,sgrey] (-0.5, 0.866) -- (0.5, 0.866);
    \draw[line width=0.5mm,sgrey] (-0.866, 0.5) -- (-0.866, -0.5);
    \draw[line width=0.5mm,sgrey] (-0.5, -0.866) -- (0.5, -0.866);

    \draw[line width=0.5mm,sgrey] (0.5, 0.866) -- (1, 0);
    \draw[line width=0.5mm,sgrey] (-0.866, 0.5) -- (0, 1);
    \draw[line width=0.5mm,sgrey] (-0.5, -0.866) -- (-1, 0);
    \draw[line width=0.5mm,sgrey] (0, -1) -- (0.866, -0.5);

    \draw[line width=0.5mm,sgrey] (0, 1) -- (0.866, 0.5);
    \draw[line width=0.5mm,sgrey] (-1, 0) -- (-0.5, 0.866);
    \draw[line width=0.5mm,sgrey] (0, -1) -- (-0.866, -0.5);
    \draw[line width=0.5mm,sgrey] (0.5, -0.866) -- (1, 0);

    \draw[line width=0.5mm,sgrey] (0.866, 0.5) -- (0.5, 0.866);
    \draw[line width=0.5mm,sgrey] (0.5, 0.866) -- (0, 1);
    \draw[line width=0.5mm,sgrey] (0, 1) -- (-0.5, 0.866);
    \draw[line width=0.5mm,sgrey] (-0.5, 0.866) -- (-0.866, 0.5);
    \draw[line width=0.5mm,sgrey] (-0.866, 0.5) -- (-1, 0);
    \draw[line width=0.5mm,sgrey] (-1, 0) -- (-0.866, -0.5);
    \draw[line width=0.5mm,sgrey] (-0.866, -0.5) -- (-0.5, -0.866);
    \draw[line width=0.5mm,sgrey] (-0.5, -0.866) -- (0, -1);
    \draw[line width=0.5mm,sgrey] (0, -1) -- (0.5, -0.866);
    \draw[line width=0.5mm,sgrey] (0.5, -0.866) -- (0.866, -0.5);
    \draw[line width=0.5mm,sgrey] (0.866, -0.5) -- (1, 0);
    \draw[line width=0.5mm,sgrey] (1, 0) -- (0.866, 0.5);


    \draw[line width=0.5mm,syellow] (-0.866, -0.5) -- (0, 1);
    \draw[line width=0.5mm,syellow] (0.866, 0.5) -- (0, -1);
    \draw[line width=0.5mm,syellow] (1, 0) -- (-0.5, 0.866);
    \draw[line width=0.5mm,syellow] (0.5, -0.866) -- (-1, 0);

    \draw[thick,red] (0.866, 0.5) node  {$\bullet$};
    \draw[thick,Green] (0.5, 0.866) node {$\bullet$};
    \draw[thick,blue] (0, 1) node {$\bullet$};
    \draw[thick,red] (-0.5, 0.866) node {$\bullet$};
    \draw[thick,Green] (-0.866, 0.5) node {$\bullet$};
    \draw[thick,blue] (-1, 0) node {$\bullet$};
    \draw[thick,red] (-0.866, -0.5) node {$\bullet$};
    \draw[thick,Green] (-0.5, -0.866) node {$\bullet$};
    \draw[thick,blue] (0, -1) node {$\bullet$};
    \draw[thick,red] (0.5, -0.866) node {$\bullet$};
    \draw[thick,Green] (0.866, -0.5) node {$\bullet$};
    \draw[thick,blue] (1, 0) node {$\bullet$};
\end{tikzpicture}
    \caption{(Left) The $3$-periodic $2$-triangulation $T$ of the $12$-gon from \Cref{3sharks}. (Middle) The $2$-triangulation $(T\setminus \{e_0,e_0+2n\})\cup \{f,f+2n\}$. (Right) The $2$-triangulation $(T\setminus \tilde e)\cup \tilde f$.}
    \label{fig:3-2triangulation}
\end{figure}
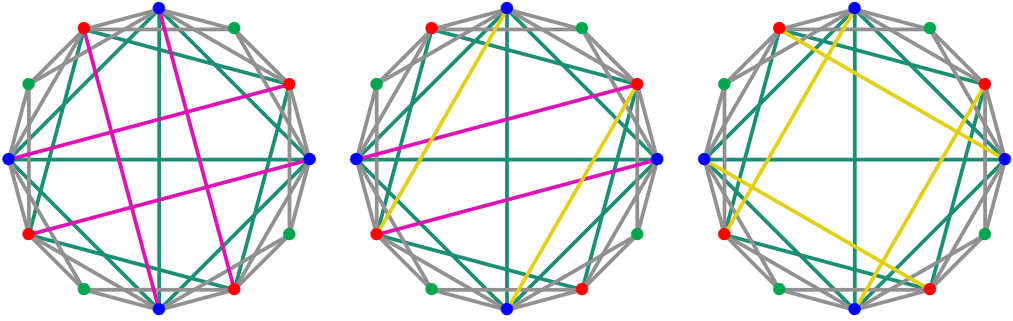

\begin{proposition}\label{flipexist}
For each 2-relevant edge $\tilde{e} \in \tilde{T}$, there exists an $n$-periodic edge $\tilde{f} \notin \tilde{T}$ such that replacing $\tilde{e}$ with $\tilde{f}$ yields another $n$-periodic 2-triangulation $(\tilde{T} \setminus \tilde{e}) \cup \tilde{f}.$
\end{proposition}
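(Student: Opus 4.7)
The strategy is to lift $\tilde T$ to the universal cover $\ocalCn$, apply the Star Decomposition Theorem to identify a canonical flip partner, and verify the simultaneous flip via the chevron pipe dream on the $4n$-gon.

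\textbf{Locating $\tilde f$.} Let $\oT = \pi^{-1}(\tilde T)$ and pick a representative edge $e = [u,v] \in \oT$ of $\tilde e$. The edge $e$ bounds two angles of $\oT$, one at each endpoint. Since $|v-u| \leq 2n$ by \Cref{length>kn}, each such angle satisfies the hypothesis of \Cref{2starparty} and hence lies in a unique $2$-star of $\oT$ (the borderline case $|v-u| = 2n$ is handled separately using \Cref{e!knedge}). The two resulting $2$-stars share $e$ and, mirroring the classical Pilaud--Santos common-bisector construction \cite{MR2721464}, determine a unique edge $f \notin \oT$. By uniqueness together with the $n$-periodicity of $\oT$, the same construction applied to $e + jn$ yields $f + jn$, so $\tilde f := \pi(\{f + jn : j \in \ZZ\})$ is a well-defined $n$-periodic edge not in $\tilde T$.

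\textbf{Verifying that $(\tilde T \setminus \tilde e) \cup \tilde f$ is a $2$-triangulation.} Transfer the problem to the $4n$-gon via \Cref{thm:annular-triangulations-as-periodic} and the chevron pipe dream correspondence \Cref{periodicbijection}. As illustrated in \Cref{3sharks}, carry out the flip in two $\pi$-symmetric stages: first replace $\{e, e+2n\}$ by $\{f, f+2n\}$, obtaining an intermediate $2$-triangulation of the $4n$-gon invariant under rotation by $\pi$; then replace $\{e+n, e+3n\}$ by $\{f+n, f+3n\}$ to produce the final $n$-periodic $2$-triangulation. Each stage consists of two classical single-edge flips applied symmetrically under $\pi$-rotation. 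Maximality of the result then follows from \Cref{maximalityfromstars}, so the resulting set is indeed a $2$-triangulation.

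The central difficulty is the last step: ensuring that the four new edges $\{f + jn\}$ do not interact to create a forbidden $3$-crossing when flipped simultaneously. The staged strategy reduces this to showing that the antipodal flips of $e$ and $e+2n$ are independent; any interaction of their associated $2$-stars would, by \Cref{lemma:lessthan2n}, already force a $3$-crossing in the original $\oT$, contradicting the assumption that $\tilde T$ is a $2$-triangulation. The same argument then handles the second stage by symmetry.
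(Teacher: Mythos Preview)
Your overall strategy---identify $\tilde f$ via the star decomposition on the cover, then verify the simultaneous flip on the $4n$-gon---matches the paper's outline, but the verification step has a genuine gap.

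The paper's proof works directly in the chevron pipe dream and splits into two cases according to whether the two $2$-stars $R,S$ through a representative $e$ each contain exactly one edge of $\tilde e$, or whether one of them (say $R$) contains both $e$ and $e+n$. The first case is easy: periodicity of the pipe dream gives a bijection between the four bump tiles of $\tilde e$ and the four crossing tiles of $\tilde f$, and swapping all of them at once keeps every pair of pipes crossing exactly once. The second case is the delicate one: here the pipes $R,R+n,R+2n,R+3n$ are interlaced, and the paper traces their relative positions explicitly to show that after swapping $e_0,e_0+2n$ with $f_0,f_0+2n$, the pipes bumping at $e_0+n,e_0+3n$ still cross at $f_0+n,f_0+3n$.

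Your staged argument does not handle this second case. You argue that the $2$-stars at $e$ and at $e+2n$ cannot interact, invoking \Cref{lemma:lessthan2n}; but that lemma only bounds the length of two specific auxiliary edges in the star-decomposition construction and does not yield the independence you assert. More importantly, the problematic interaction is between $e$ and $e+n$, not between the antipodal pair $e$ and $e+2n$: when a single $2$-star $R$ contains both $e$ and $e+n$ (a configuration that genuinely occurs---see \Cref{3sharks}), your first-stage flip at $e$ modifies $R$, hence modifies a $2$-star through $e+n$. You then have no argument that the second-stage flip at $e+n$ in the intermediate triangulation lands on $f+n$ rather than on some other edge, nor even that the intermediate set $(T\setminus\{e,e+2n\})\cup\{f,f+2n\}$ is a $2$-triangulation in this situation. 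This is exactly the difficulty the paper's tile-by-tile pipe-dream analysis is designed to resolve, and it cannot be bypassed by appeal to \Cref{lemma:lessthan2n} or \Cref{maximalityfromstars}.
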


\begin{proof}
Let $e \in \tilde{e}$ lie in two 2-stars $R$ and $S$ on the $4n$-gon which share a common bisector edge $f$. First, assume that both $R$ and $S$ contain exactly one representative of $\tilde{e}$. In the Chevron pipe dream $\Chev_{4n,2}$ corresponding to $T,$ the pipes corresponding to $R$ and $S$ cross exactly once at a unique tile corresponding to an edge $f\notin T.$ Additionally, by periodicity conditions on $T$ and the corresponding chevron pipe dream, the pipes passing through the tile corresponding to each element of $\tilde{e}$ cross at a tile corresponding to an element of $\tilde{f},$ inducing a bijection between $\tilde{e}$ and $\tilde{f}$ as sets of tiles.  Thus, in the chevron pipe dream corresponding to $(\tilde{T}\setminus \tilde{e})\cup \tilde{f}$, every pair of pipes crosses exactly once, and hence $(\tilde{T}\setminus \tilde{e})\cup \tilde{f}$ is indeed a $2$-triangulation.

Now we consider the more subtle case that $R$ contains more than one edge from $\tilde{e}.$ Since the orbit of a $2$-star under the rotation action (by $\frac{\pi}{2}$) has size $4,$  $e$ cannot be of length $2n.$ Additionally, as distinct edges $e$ and $e+2n$ do not intersect or share a common vertex, they cannot belong to the same $2$-star. As $R$ contains five edges, due to periodicity, it cannot pass through the same angle twice. Accordingly, we may assume that $R$ contains $e$ and $e+n,$ and we note the following:

\begin{enumerate}
    \item  If $R$ passes through $e$ and $e+n$ on the same side of the reflection axis and passes through the NW (respectively, SE) bump of $e$, then it must pass through the SE (respectively, NW) bump of $e+n$.
    \item  If $R$ passes through $e$ and $e+n$ on opposite sides of the axis and passes through the NW (respectively, SE) bump of $e$, then it must also pass through the NW (respectively, SE) bump of $e+n$.
\end{enumerate}

Choose a representative $e_0 \in \tilde{e}$ below the reflection axis so that $e_0+n$ lies NE of $e_0$ in the pipe dream. Consider the case where $R_0$, the 2-star containing $e_0$ and $e_0+n$, passes through the NW bump of $e_0$. Then:
\begin{enumerate}
    \item $R_0$ passes through the SE bump of $e_0+n$,
    \item $R_0+n$ passes through the SE bumps of $e_0+n$ and $e_0+2n$,
    \item $R_0+2n$ passes through the SE bump of $e_0+2n$ and the NW bump of $e_0+3n$,
    \item $R_0+3n$ passes through the SE bumps of $e_0+3n$ and $e_0$.
\end{enumerate}

We obtain the following:

\begin{enumerate}
    \item $R_0$ and $R_0+3n$, which bump at $e_0$, must cross at a tile $f_0$ located NE of $e_0$ and SW of both $e_0+n$ and $e_0+3n$.
    \item $R_0+2n$ and $R_0+n,$ which bump at $e_0+2n,$ must cross at $f_0+2n$ NE of $e_0+2n$ and SW of both $e_0+n$ and $e_0+3n.$
\end{enumerate}

Similarly, $R_0$ and $R_0+n,$ which bump at $e_0+n,$ must cross at $f_0+n,$ and  $R_0+2n$ and $R_0+3n,$ which bump at $e_0+3n,$ must cross at $f_0+3n.$ Additionally, $f_0+n$ and $f_0+3n$ must either lie NE of $e_0+n$ and $e_0+3n$ respectively or SW of $e_0$ and $e_0+2n$ respectively. In either case, after replacing $e_0$ and $e_0+2n$ with crossing tiles \begin{mosaic}{0.5}
    \tiX \\
\end{mosaic} \ and $f_0$ and $f_0+2n$ with bump tiles \begin{mosaic}{0.5}
    \tiJF \\
\end{mosaic}, the pipes which bump at $e_0+n$ and $e_0+3n$ still cross at $f_0+n$ and $f_0+3n.$ Thus, $(T\setminus \tilde{e})\cup \tilde{f}$ is a $2$-triangulation.

In the case that $R_0,$ the $2$-star containing $e_0,$ passes through the SE bump of $e_0,$ a similar proof applies: the pipes which bump $e_0,e_0+n,e_0+2n,$ and $e_0+3n$ cross at $f_0,f_0+n,f_0+2n,$ and $f_0+3n$ respectively. Then $f_0$ and $f_0+2n$ must be NE of $e_0$ and $e_0+3n$ or SW of $e_0$ and $e_0+2n$ whereas $f_0+n$ and $f_0+3n$ must be NE of $e_0$ and $e_0+2n$ respectively and SW of both $e_0+n$ and $e_0+2n,$ so that after replacing $e_0$ and $e_0+2n$ with crossing tiles \begin{mosaic}{0.5}
    \tiX \\
\end{mosaic} \ and $f_0$ and $f_0+2n$ with bump tiles \begin{mosaic}{0.5}
    \tiJF \\
\end{mosaic}, the pipes which bump at $e_0+n$ and $e_0+3n$ still cross at $f_0+n$ and $f_0+3n.$ This is illustrated in \Cref{3sharks} and \Cref{3periodic2tri}.\end{proof}

\begin{proposition}\label{flipunique}

$T$ and $(T\setminus \tilde{e})\cup \tilde{f}$ are the only two $n$-periodic $2$-triangulations on the $2kn$-gon that contain $T\setminus \tilde{e}.$

\end{proposition}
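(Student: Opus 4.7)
The plan is to deduce uniqueness of the flip target from the rigidity of reduced chevron pipe dreams, mirroring the existence argument of \Cref{flipexist} in reverse. Throughout, suppose $T' = (T \setminus \tilde{e}) \cup \tilde{g}$ is an $n$-periodic $2$-triangulation with $T \setminus \tilde{e} \subset T'$ and $T' \ne T$; the goal is to show $\tilde{g} = \tilde{f}$.

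First I would reduce to a comparison of chevron pipe dreams. Since $T'$ is $n$-periodic, the intersection $T' \cap \tilde{e}$ is itself an $n$-periodic subclass of $\tilde{e}$, and since $T' \ne T$ this intersection must be empty, so $\tilde{g}$ is an $n$-periodic class disjoint from $T$. By \Cref{Cnpurity}, $|\tilde{g}| = |\tilde{e}|$, so the pipe dreams $\Chev_{4n,2}(T)$ and $\Chev_{4n,2}(T')$ live on the same chevron polyomino, share the same pipe permutation, and differ only at the tiles of $\tilde{e}$ (bumps in the first, crossings in the second) and $\tilde{g}$ (crossings in the first, bumps in the second).

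The core of the argument is to track the two pipes $P$ and $Q$ corresponding to the $2$-stars $R_0$ and $S_0$ sharing a representative $e_0 \in \tilde{e}$ in $T$. Because each pair of pipes in a chevron pipe dream crosses exactly once, $P$ and $Q$ meet at a unique crossing tile in $\Chev_{4n,2}(T)$, which is precisely the tile of the common bisector $f_0 \in \tilde{f}$ identified in the proof of \Cref{flipexist}. In $\Chev_{4n,2}(T')$, the tile of $e_0$ is now a crossing, so $P$ and $Q$ must cross there; reducedness then forces them to bump at a unique other tile, and running the trajectory analysis of \Cref{flipexist} in reverse identifies this tile as $f_0$. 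Propagating via $n$-periodicity and the reflection symmetry of the chevron polyomino, we conclude $\tilde{g} = \tilde{f}$.

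The main technical obstacle is the second case from \Cref{flipexist}, where a single $2$-star $R_0$ contains both $e_0$ and $e_0 + n$. Here the pipe $P$ interacts with multiple tiles of $\tilde{e}$, so the forced bump locations must be verified consistently across all rotational copies. The casework parallels the existence argument but is run in reverse: given a putative $n$-periodic flip, we show it coincides with $\tilde{f}$ by tracking rerouted pipes through each of the four positions determined by the rotational action and cross-checking against the star decomposition from \Cref{2starparty}, which guarantees that the new $2$-star structure of $T'$ is uniquely determined by $T \setminus \tilde{e}$ together with a single angle bisector per rotational orbit.
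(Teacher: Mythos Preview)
Your proposal identifies the right toolkit (chevron pipe dreams, reducedness, periodicity) but the central step is asserted rather than argued. You write that in $\Chev_{4n,2}(T')$ the tile $e_0$ is a crossing, ``so $P$ and $Q$ must cross there; reducedness then forces them to bump at a unique other tile,'' and that this tile is $f_0$. Neither clause follows. First, pipes are determined by the entire tiling, not just by their endpoints: once the tiles in $\tilde g$ switch from crossings to bumps, the trajectories that were $P$ and $Q$ in $T$ need no longer pass through $e_0$ at all in $T'$, so you cannot simply say they cross there. Second, even restricting attention to the portions of $R_0$ and $S_0$ between $e_0$ and $f_0$, there is no a priori reason the forced bump must occur at $f_0$; a tile $g_0\in\tilde g$ sitting on $R_0$ strictly between $e_0$ and $f_0$ would equally well separate those two pipes. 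What actually rules this out is the interaction with the \emph{other} rotational copies: placing $g_0$ there drags the pipe $R'_0$ crossing $R_0$ at $g_0$ into the region, and one must then chase where $g_0+2n$ can sit and show that a \emph{new} double crossing is created (either at $g'$ and $f_0$, or at $g'$ and $g''$). This is precisely the content of the paper's argument, and your phrase ``running the trajectory analysis of \Cref{flipexist} in reverse'' does not supply it.

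The paper's proof makes this work by first rotating $T$ so that no element of $\tilde e$ or $\tilde f$ sits strictly between $e_0$ and $f_0$, then confining attention to the region $A$ bounded by the $R_0$- and $S_0$-segments joining $e_0$ to $f_0$. Any candidate $g_0$ is forced into $A$ (else $R_0,S_0$ double-cross at $e_0$ and $f_0$), and then a short case analysis on whether $g_0+2n$ lies on $S_0$ produces a double crossing in $(T\setminus\tilde e)\cup\tilde g$ either way. Your sketch of the ``subtle case'' via \Cref{2starparty} does not substitute for this: the star decomposition tells you the $2$-stars of $T'$ exist, not that their common bisector coincides with $\tilde f$. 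To repair your argument you need this region-and-copy analysis, which is the genuine combinatorial content of the uniqueness claim.
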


\begin{proof}
As established in \Cref{flipexist}, pairs of pipes that bump at tiles in $\tilde{e}$ cross at tiles in $\tilde{f}.$  Choose a representative $e_0\in \tilde{e}$ below the reflection axis. Additionally, choose $f_0\in \tilde{f}$ so that the pipes which bump at $e_0$ cross at $f_0.$ Then $\tilde{e}=\{e_0,e_0+n,e_0+2n,e_0+3n\},$ $\tilde{f}=\{f_0,f_0+n,f_0+2n,f_0-n\},$ and for $\ell\in \{0,1,2,3\}$ pipes which bump at $e_0+\ell n$ cross at $f_0+\ell n.$ 

We can rotate $T$ as presented on the $4n$-gon so that, in the associated chevron pipe dream, no element of $\tilde{e}$ or $\tilde{f}$ lies both NE of $e_0$ and SW of $f_0,$ or both SW of $e_0$ and NE of $f_0.$ Let $R_0$ and $S_0$ denote the pipes in $T$ which bump at $e_0$ and cross at $f_0.$ For $\ell\in \{0,1,2,3\},$ the pipes $S_0+\ell n$ and $R_0+\ell_n$ bump at $e_0+\ell n$ and cross at $f_0+\ell n.$ The segments of $R_0$ and $S_0$ lying between $e_0$ and $f_0$ bound inclusively a region of the chevron pipe dream, denoted $A.$ 

Assume there is an $n$-periodic edge $\tilde{g}\notin \tilde{T}$ with $\tilde{g}\neq \tilde{f}$ such that  $(T\setminus \tilde{e})\cup \tilde{g}$ is a $2$-triangulation. If no element of $\tilde{g}$ lies on $R_0$ or $S_0$ between $e_0$ and $f_0,$ then two pipes cross twice in $(T\setminus \tilde e)\cup \tilde g$ at $e_0$ and $f_0.$ Thus, we may choose $g_0\in \tilde g$ such that $g_0+\ell n$ lies on $R_0+\ell n$ or $S_0+\ell n$ in $A,$ between $e_0+\ell n$ and $f_0+\ell n.$ Without loss of generality, let $R_0$ and $S_0$ be assigned so that $g_0+\ell n$ lies on $R_0+\ell n.$ Note that by the indexing of tiles in \Cref{periodicpipedef} , $g_0+n$ and $g_0+3n$ may not lie both NE of $e_0$ and SW of $f_0,$ or both SW of $e_0$ and NE of $f_0.$ Thus, the only elements of $\tilde{e}$ or $\tilde{g}$ which may be located in $A$ are $e_0,$ $g_0,$ and $g_0+2n.$ 

Let $R'_0$ denote the pipe that crosses $R_0$ at $g_0.$ Since $R'_0$ cannot cross $R_0$ twice, $R'_0$ must cross $S_0$ in $A.$ Let $g'$ denote the tile where $R'_0$ and $S_0$ cross. Since $g_0$ lies on $R_0$ and $R'_0,$ $g_0+2n$ cannot lie on $R_0$ or $R'_0.$ Thus, if $g_0+2n$ does not lie on $S_0,$ in $(T\setminus \tilde{e})\cup \tilde g$ a pair of pipes crosses twice at $g'$ and $f_0,$ as illustrated in \Cref{fig:unique-1}. 

\begin{figure}[H]
    \centering

\begin{tikzpicture}[scale=1.2, every node/.style={scale=0.9},
    gridnode/.style={draw, minimum size=0.7cm, inner sep=0pt},
    cross/.style={line width=0.5pt},
    curve/.style={line width=0.5pt}
]

\newcommand{\drawfullcross}[1]{
  \draw[cross, thick] (#1.north) -- (#1.south);
  \draw[cross, thick] (#1.east) -- (#1.west);
}

\newcommand{\drawfullcrossred}[1]{
  \draw[sorange, cross, thick] (#1.north) -- (#1.south);
  \draw[sorange, cross, thick] (#1.east) -- (#1.west);
}

\newcommand{\drawcurveRightToBottomInternal}[1]{
  \path (#1.south) coordinate (bot);
  \path (#1.east) coordinate (right);
  \path (#1.center) coordinate (center);
  \draw[spink, curve, thick] 
    (right)
    .. controls ($(center)+(0.1,0)$) and ($(center)+(0,-0.1)$) ..
    (bot);
}

\newcommand{\drawcurveLeftToTopInternal}[1]{
  \path (#1.north) coordinate (top);
  \path (#1.west) coordinate (left);
  \path (#1.center) coordinate (center);
  \draw[spink, curve, thick] 
    (left)
    .. controls ($(center)+(-0.1,0)$) and ($(center)+(0,0.1)$) ..
    (top);
}

\node[gridnode, spink,  thick] (Ae) at (-1,0) {}; 
\node[gridnode, thick, draw=black] (A10) at (1,0) {}; \drawfullcross{A10} 
\node[gridnode, sorange, thick] (Ag) at (2,2) {}; \drawfullcrossred{Ag} 
\node[gridnode, thick, draw=black] (A32) at (3,2) {}; \drawfullcross{A32} 

\drawcurveRightToBottomInternal{Ae}
\drawcurveLeftToTopInternal{Ae}

\node[below left=1pt of Ae] {\textcolor{spink}{$e_0$}};
\node[above=2pt of Ag] {\textcolor{sorange}{$g_0$}};
\node[above=2pt of A32] {$f_0$};
\node[below right=-3pt of A10] {\textcolor{gray}{$g'_0$}};

\draw[thick] (Ae) -- (A10);
\draw[thick] (Ag) -- (A32);

\draw[gray, thick]
    ($(A10.south) - (0, 0.1)$) -- ($(A10.north)$);
\draw[gray, thick]
    ($(A10.north)$) -- ($(1,1)$);
\draw[gray, thick]
    ($(1,1)$) .. controls ($(1.0, 1.25)$)  .. ($(1.25,1.25)$);
\draw[gray,thick]
    ($(1.25,1.25)$) -- ($(1.75,1.25)$);
\draw[gray,thick]
    ($(1.75,1.25)$) .. controls  ($(2.0,1.25)$) .. ($(2.0,1.5)$);
\draw[gray,thick]
    ($(2.0,1.5)$) -- ($(Ag.south)$);

\draw[ thick]
    ($(Ae.north)$) -- ($(Ae) + (0,1)$);
\draw[ thick]
    ($(Ae) + (0,1)$) .. controls ($(Ae) + (0,1) + (0,0.25)$)  .. ($(Ae) + (0,1) + (0.25,0.25)$);
\draw[thick]
    ($(Ae) + (0,1) + (0.25,0.25)$) -- ($(Ae) + (0,1) + (0.25,0.25) + (0.5,0)$);
\draw[thick]
    ($(Ae) + (0,1) + (0.25,0.25) + (0.5,0)$) .. controls  ($(Ae) + (0,1) + (0.25,0.25) + (0.5,0) + (0.25,0)$) .. ($(Ae) + (0,1) + (0.25,0.25) + (0.5,0) + (0.25,0) + (0,0.25)$);
\draw[thick]
    ($(Ae) + (1,1.5)$) -- ($(Ae) + (1,1.75)$);
\draw[thick]
    ($(Ae) + (1,1.75)$) .. controls ($(Ae) + (1,1.75) + (0,0.25)$)  .. ($(Ae) + (1,1.75) + (0.25,0.25)$);
\draw[thick]
    ($(Ae) + (1.25,2) $) -- ($(Ag.west)$);

\draw[thick]
    ($(A10.east)$) -- ($(A10) + (0.75,0)$);
\draw[thick]
    ($(A10) + (0.75,0)$) .. controls ($(A10) + (0.75,0) + (0.25,0)$)  .. ($(A10) + (0.75,0) +(0.25,0.25)$);
\draw[thick]
    ($(A10) + (1.0,0.25)$) -- ($(A10) + (1.0,0.25) + (0,0.25)$);
\draw[thick]
    ($(A10) + (1.0,0.5)$) .. controls ($(A10) + (1.0,0.5) + (0,0.25)$)  .. ($(A10) + (1.0,0.5) + (0.25,0.25)$);
\draw[thick]
    ($(A10) + (1.25,0.75)$) -- ($(A10) + (1.75,0.75)$);
\draw[thick]
    ($(A10) + (1.75,0.75)$) .. controls ($(A10) + (1.75,0.75) + (0.25,0)$)  .. ($(A10) + (1.75,0.75) + (0.25,0.25)$);
\draw[thick]
    ($(A10) + (2.0,1.0)$) -- ($(A32.south)$);

\end{tikzpicture} \begin{tikzpicture}[scale=1.2, every node/.style={scale=0.9},
    gridnode/.style={draw, minimum size=0.7cm, inner sep=0pt},
    cross/.style={line width=0.5pt},
    curve/.style={line width=0.5pt}
]

\newcommand{\drawfullcross}[1]{
  \draw[cross, thick] (#1.north) -- (#1.south);
  \draw[cross, thick] (#1.east) -- (#1.west);
}

\newcommand{\drawfullcrossblue}[1]{
  \draw[spink, cross, thick] (#1.north) -- (#1.south);
  \draw[spink, cross, thick] (#1.east) -- (#1.west);
}

\newcommand{\drawcurveRightToBottomInternal}[1]{
  \path (#1.south) coordinate (bot);
  \path (#1.east) coordinate (right);
  \path (#1.center) coordinate (center);
  \draw[sorange, curve, thick] 
    (right)
    .. controls ($(center)+(0.1,0)$) and ($(center)+(0,-0.1)$) ..
    (bot);
}

\newcommand{\drawcurveLeftToTopInternal}[1]{
  \path (#1.north) coordinate (top);
  \path (#1.west) coordinate (left);
  \path (#1.center) coordinate (center);
  \draw[sorange, curve, thick] 
    (left)
    .. controls ($(center)+(-0.1,0)$) and ($(center)+(0,0.1)$) ..
    (top);
}

\node[gridnode, spink,  thick] (Ae) at (-1,0) {}; \drawfullcrossblue{Ae} 
\node[gridnode, thick, draw=black] (A10) at (1,0) {}; \drawfullcross{A10} 
\node[gridnode, sorange, thick] (Ag) at (2,2) {}; 
\node[gridnode, thick, draw=black] (A32) at (3,2) {}; \drawfullcross{A32} 

\drawcurveRightToBottomInternal{Ag}
\drawcurveLeftToTopInternal{Ag}

\node[below left=1pt of Ae] {\textcolor{spink}{$e_0$}};
\node[above=2pt of Ag] {\textcolor{sorange}{$g_0$}};
\node[above=2pt of A32] {$f_0$};
\node[below right=-3pt of A10] {\textcolor{gray}{$g'_0$}};

\draw[thick] (Ae) -- (A10);
\draw[thick] (Ag) -- (A32);

\draw[gray, thick]
    ($(A10.south) - (0, 0.1)$) -- ($(A10.north)$);
\draw[gray, thick]
    ($(A10.north)$) -- ($(1,1)$);
\draw[gray, thick]
    ($(1,1)$) .. controls ($(1.0, 1.25)$)  .. ($(1.25,1.25)$);
\draw[gray,thick]
    ($(1.25,1.25)$) -- ($(1.75,1.25)$);
\draw[gray,thick]
    ($(1.75,1.25)$) .. controls  ($(2.0,1.25)$) .. ($(2.0,1.5)$);
\draw[gray,thick]
    ($(2.0,1.5)$) -- ($(Ag.south)$);

\draw[ thick]
    ($(Ae.north)$) -- ($(Ae) + (0,1)$);
\draw[ thick]
    ($(Ae) + (0,1)$) .. controls ($(Ae) + (0,1) + (0,0.25)$)  .. ($(Ae) + (0,1) + (0.25,0.25)$);
\draw[thick]
    ($(Ae) + (0,1) + (0.25,0.25)$) -- ($(Ae) + (0,1) + (0.25,0.25) + (0.5,0)$);
\draw[thick]
    ($(Ae) + (0,1) + (0.25,0.25) + (0.5,0)$) .. controls  ($(Ae) + (0,1) + (0.25,0.25) + (0.5,0) + (0.25,0)$) .. ($(Ae) + (0,1) + (0.25,0.25) + (0.5,0) + (0.25,0) + (0,0.25)$);
\draw[thick]
    ($(Ae) + (1,1.5)$) -- ($(Ae) + (1,1.75)$);
\draw[thick]
    ($(Ae) + (1,1.75)$) .. controls ($(Ae) + (1,1.75) + (0,0.25)$)  .. ($(Ae) + (1,1.75) + (0.25,0.25)$);
\draw[thick]
    ($(Ae) + (1.25,2) $) -- ($(Ag.west)$);

\draw[thick]
    ($(A10.east)$) -- ($(A10) + (0.75,0)$);
\draw[thick]
    ($(A10) + (0.75,0)$) .. controls ($(A10) + (0.75,0) + (0.25,0)$)  .. ($(A10) + (0.75,0) +(0.25,0.25)$);
\draw[thick]
    ($(A10) + (1.0,0.25)$) -- ($(A10) + (1.0,0.25) + (0,0.25)$);
\draw[thick]
    ($(A10) + (1.0,0.5)$) .. controls ($(A10) + (1.0,0.5) + (0,0.25)$)  .. ($(A10) + (1.0,0.5) + (0.25,0.25)$);
\draw[thick]
    ($(A10) + (1.25,0.75)$) -- ($(A10) + (1.75,0.75)$);
\draw[thick]
    ($(A10) + (1.75,0.75)$) .. controls ($(A10) + (1.75,0.75) + (0.25,0)$)  .. ($(A10) + (1.75,0.75) + (0.25,0.25)$);
\draw[thick]
    ($(A10) + (2.0,1.0)$) -- ($(A32.south)$);

\end{tikzpicture}
\caption{In the diagram on the right, a pair of pipes crosses twice at $g'_0$ and $f_0.$ \label{fig:unique-1}
}
\end{figure}

We have reduced to the case where $g_0+2n$ lies on $S_0$ between $g'$ and $f_0.$ In particular, $R_0+2n$ does not cross $R'_0=S_0+2n$ in $A$ since $R_0+2n$ and $S_0+2n$ cross at $f_0+2n.$ Additionally, since $R_0+2n$ cannot cross $S_0$ twice, $R_0+2n$ and $S_0+2n$ cross at a tile $g''$ in $A.$ Thus, in $(T\setminus \tilde e)\cup \tilde g,$ a pair of pipes crosses twice at $g'$ and $g'',$ as illustrated in \Cref{fig:unique-2}.

\begin{figure}[H]
    \centering
    
\begin{tikzpicture}[scale=1.2, every node/.style={scale=0.9},
    gridnode/.style={draw, minimum size=0.7cm, inner sep=0pt},
    cross/.style={line width=0.5pt},
    curve/.style={line width=0.5pt}
]

\newcommand{\drawfullcross}[1]{
  \draw[cross, thick] (#1.north) -- (#1.south);
  \draw[cross, thick] (#1.east) -- (#1.west);
}

\newcommand{\drawfullcrossred}[1]{
  \draw[sorange, cross, thick] (#1.north) -- (#1.south);
  \draw[sorange, cross, thick] (#1.east) -- (#1.west);
}

\newcommand{\drawcurveRightToBottomInternal}[1]{
  \path (#1.south) coordinate (bot);
  \path (#1.east) coordinate (right);
  \path (#1.center) coordinate (center);
  \draw[spink, curve, thick] 
    (right)
    .. controls ($(center)+(0.1,0)$) and ($(center)+(0,-0.1)$) ..
    (bot);
}

\newcommand{\drawcurveLeftToTopInternal}[1]{
  \path (#1.north) coordinate (top);
  \path (#1.west) coordinate (left);
  \path (#1.center) coordinate (center);
  \draw[spink, curve, thick] 
    (left)
    .. controls ($(center)+(-0.1,0)$) and ($(center)+(0,0.1)$) ..
    (top);
}

\node[gridnode,  thick] (Ae) at (-1,0) {}; \drawfullcross{Ae} 
\node[gridnode, sorange, thick] (A10) at (1,0) {}; \drawfullcrossred{A10} 
\node[gridnode, sorange, thick] (Ag) at (-1,2) {}; \drawfullcrossred{Ag} 
\node[gridnode, thick, draw=black] (A32) at (1,2) {}; \drawfullcross{A32} 
\node[gridnode, thick, draw=black] (f0) at (1,3) {}; \drawfullcross{f0} 
\node[gridnode, thick, draw=black] (f02n) at (2,2) {}; \drawfullcross{f02n} 
\node[gridnode, spink, thick] (e0) at (-2,0) {};  
\node[gridnode, spink, thick] (e02n) at (-1,-1) {};  

\drawcurveRightToBottomInternal{e0}
\drawcurveLeftToTopInternal{e0}

\drawcurveRightToBottomInternal{e02n}
\drawcurveLeftToTopInternal{e02n}

\node[above right=1pt of Ae] {\textcolor{black}{$g'$}};
\node[above left=-1pt of Ag] {\textcolor{sorange}{$g_0$}};
\node[below left=-3pt of A32] {$g''$};
\node[below right=-3pt of A10] {\textcolor{sorange}{$g_0 + 2n$}};
\node[right=2pt of f0] {{$f_0$}};
\node[right=2pt of f02n] {{$f_0 + 2n$}};
\node[left=2pt of e0] {\textcolor{spink}{$e_0$}};
\node[left=2pt of e02n] {\textcolor{spink}{$e_0 + 2n$}};
--- Edges ---
\draw[thick] (e02n) -- (Ae);
\draw[thick] (Ae) -- (Ag);
\draw[thick] (A10) -- (A32);
\draw[thick] (A32) -- (f0);
\draw[thick] (e0) -- (Ae);
\draw[thick] (Ae) -- (A10);
\draw[thick] (Ag) -- (A32);
\draw[thick] (A32) -- (f02n);

\draw[thick] 
    ($(e0.north)$) -- ($(e0.north) + (0,1.47)$);
\draw[thick]
    ($(e0.north) + (0,1.47)$) .. controls ($(e0.north) + (0,1.47) + (0, 0.25)$)  .. ($(e0.north) + (0,1.47) + (0.25,0.25)$);
\draw[thick]
    ($(e0.north) + (0.25,1.72)$) -- ($(Ag.west)$);

\draw[thick] 
    ($(Ag.north)$) -- ($(Ag.north) + (0,0.5)$);
\draw[thick]
    ($(Ag.north) + (0,0.5)$) .. controls ($(Ag.north) + (0,0.5) + (0,0.25)$)  .. ($(Ag.north) + (0,0.5) + (0.25,0.25)$);
\draw[thick]
    ($(Ag.north) + (0.25,0.75)$) -- ($(f0.west)$);

\draw[thick] 
    ($(e02n.east)$) -- ($(e02n.east) + (1.48,0)$);
\draw[thick]
    ($(e02n.east) + (1.48,0)$) .. controls ($(e02n.east) + (1.48,0) + (0.25, 0)$)  .. ($(e02n.east) + (1.48,0) +(0.25,0.25)$);
\draw[thick]
    ($(e02n.east) + (1.73,0.25)$) -- ($(A10.south)$);


\draw[thick] 
    ($(A10.east)$) -- ($(A10.east) + (0.48,0)$);
\draw[thick]
    ($(A10.east) + (0.48,0)$) .. controls ($(A10.east) + (0.48,0) + (0.25,0)$)  .. ($(A10.east) + (0.48,0) + (0.25,0.25)$);
\draw[thick]
    ($(A10.east) + (0.73,0.25)$) -- ($(f02n.south)$);

\end{tikzpicture} \begin{tikzpicture}[scale=1.2, every node/.style={scale=0.9},
    gridnode/.style={draw, minimum size=0.7cm, inner sep=0pt},
    cross/.style={line width=0.5pt},
    curve/.style={line width=0.5pt}
]

\newcommand{\drawfullcross}[1]{
  \draw[cross, thick] (#1.north) -- (#1.south);
  \draw[cross, thick] (#1.east) -- (#1.west);
}

\newcommand{\drawfullcrossblue}[1]{
  \draw[spink, cross, thick] (#1.north) -- (#1.south);
  \draw[spink, cross, thick] (#1.east) -- (#1.west);
}

\newcommand{\drawcurveRightToBottomInternal}[1]{
  \path (#1.south) coordinate (bot);
  \path (#1.east) coordinate (right);
  \path (#1.center) coordinate (center);
  \draw[sorange, curve, thick] 
    (right)
    .. controls ($(center)+(0.1,0)$) and ($(center)+(0,-0.1)$) ..
    (bot);
}

\newcommand{\drawcurveLeftToTopInternal}[1]{
  \path (#1.north) coordinate (top);
  \path (#1.west) coordinate (left);
  \path (#1.center) coordinate (center);
  \draw[sorange, curve, thick] 
    (left)
    .. controls ($(center)+(-0.1,0)$) and ($(center)+(0,0.1)$) ..
    (top);
}

\node[gridnode,  thick] (Ae) at (-1,0) {}; \drawfullcross{Ae} 
\node[gridnode, sorange, thick] (A10) at (1,0) {};  
\node[gridnode, sorange, thick] (Ag) at (-1,2) {};  
\node[gridnode, thick, draw=black] (A32) at (1,2) {}; \drawfullcross{A32} 
\node[gridnode, thick, draw=black] (f0) at (1,3) {}; \drawfullcross{f0} 
\node[gridnode, thick, draw=black] (f02n) at (2,2) {}; \drawfullcross{f02n} 
\node[gridnode, spink, thick] (e0) at (-2,0) {}; \drawfullcrossblue{e0} 
\node[gridnode, spink, thick] (e02n) at (-1,-1) {}; \drawfullcrossblue{e02n} 

\drawcurveRightToBottomInternal{Ag}
\drawcurveLeftToTopInternal{Ag}

\drawcurveRightToBottomInternal{A10}
\drawcurveLeftToTopInternal{A10}

\node[above right=1pt of Ae] {\textcolor{black}{$g'$}};
\node[above left=-1pt of Ag] {\textcolor{sorange}{$g_0$}};
\node[below left=-3pt of A32] {$g''$};
\node[below right=-3pt of A10] {\textcolor{sorange}{$g_0 + 2n$}};
\node[right=2pt of f0] {{$f_0$}};
\node[right=2pt of f02n] {{$f_0 + 2n$}};
\node[left=2pt of e0] {\textcolor{spink}{$e_0$}};
\node[left=2pt of e02n] {\textcolor{spink}{$e_0 + 2n$}};
\draw[thick] (e02n) -- (Ae);
\draw[thick] (Ae) -- (Ag);
\draw[thick] (A10) -- (A32);
\draw[thick] (A32) -- (f0);
\draw[thick] (e0) -- (Ae);
\draw[thick] (Ae) -- (A10);
\draw[thick] (Ag) -- (A32);
\draw[thick] (A32) -- (f02n);

\draw[thick] 
    ($(e0.north)$) -- ($(e0.north) + (0,1.47)$);
\draw[thick]
    ($(e0.north) + (0,1.47)$) .. controls ($(e0.north) + (0,1.47) + (0, 0.25)$)  .. ($(e0.north) + (0,1.47) + (0.25,0.25)$);
\draw[thick]
    ($(e0.north) + (0.25,1.72)$) -- ($(Ag.west)$);

\draw[thick] 
    ($(Ag.north)$) -- ($(Ag.north) + (0,0.5)$);
\draw[thick]
    ($(Ag.north) + (0,0.5)$) .. controls ($(Ag.north) + (0,0.5) + (0,0.25)$)  .. ($(Ag.north) + (0,0.5) + (0.25,0.25)$);
\draw[thick]
    ($(Ag.north) + (0.25,0.75)$) -- ($(f0.west)$);

\draw[thick] 
    ($(e02n.east)$) -- ($(e02n.east) + (1.48,0)$);
\draw[thick]
    ($(e02n.east) + (1.48,0)$) .. controls ($(e02n.east) + (1.48,0) + (0.25, 0)$)  .. ($(e02n.east) + (1.48,0) +(0.25,0.25)$);
\draw[thick]
    ($(e02n.east) + (1.73,0.25)$) -- ($(A10.south)$);


\draw[thick] 
    ($(A10.east)$) -- ($(A10.east) + (0.48,0)$);
\draw[thick]
    ($(A10.east) + (0.48,0)$) .. controls ($(A10.east) + (0.48,0) + (0.25,0)$)  .. ($(A10.east) + (0.48,0) + (0.25,0.25)$);
\draw[thick]
    ($(A10.east) + (0.73,0.25)$) -- ($(f02n.south)$);

\end{tikzpicture}

\caption{\label{fig:unique-2}In the diagram on the right, a pair of pipes crosses twice at $g'$ and $g''$. }
    
\end{figure}

In each case, we have shown that in $(T\setminus \tilde e)\cup \tilde g,$ there exists a pair of pipes that crosses twice. Thus, $(T\setminus \tilde e)\cup \tilde g$ is not a $2$-triangulation.
\end{proof}

We say that we obtain the $n$-periodic $2$-triangulation on the $4n$-gon $(T\setminus \tilde e)\cup \tilde f$ from $T$ by flipping the $n$-periodic edge $e.$ Let $G^n_{4n,2}$ denote the \emph{flip graph} of $n$-periodic $2$-triangulations on the $4n$-gon, the graph whose vertices are $n$-periodic $2$-triangulations on the $4n$-gon and whose edges are pairs related by a flip. Equivalently, $G^n_{4n,2}$ is the graph of flips of $2$-triangulations on the half-cylinder $\calCn.$

\begin{theorem}\label{flipgraphthm}
The graph $G^n_{4n,2}$ is regular of degree $2(n-1).$
\end{theorem}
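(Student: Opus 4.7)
The plan is to identify the neighbors of an arbitrary vertex $T$ in $G^n_{4n,2}$ with the set of $2$-relevant $n$-periodic edges of $T$ and then count them using the results already established. More precisely, under the bijection of \Cref{thm:annular-triangulations-as-periodic}, a vertex of $G^n_{4n,2}$ corresponds to a $2$-triangulation on $\calCn$, and an edge of $G^n_{4n,2}$ corresponds to a flip, that is, a pair $\{T, T'\}$ with $T' = (T \setminus \tilde{e}) \cup \tilde{f}$ for some $n$-periodic edge $\tilde{e}$.

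First I would argue that the only $n$-periodic edges of $T$ that can be removed and replaced are the $2$-relevant ones. The $2$-boundary edges (length $\leq 2$) and the boundary segments of the half-cylinder belong to every $2$-triangulation, so they are never flipped; this matches the convention in \Cref{Cnpurity} that the flippable edges are exactly the $2(n-1)$ $2$-relevant edges of $T$.

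Next I would assemble the degree count as follows. Fix $T$ and let $\tilde{e}$ range over the $2$-relevant $n$-periodic edges of $T$, of which there are $2(n-1)$ by \Cref{Cnpurity}. By \Cref{flipexist}, for each such $\tilde{e}$ there exists an $n$-periodic edge $\tilde{f} \notin \tilde{T}$ such that $(\tilde{T}\setminus \tilde{e}) \cup \tilde{f}$ is again an $n$-periodic $2$-triangulation; thus each $\tilde e$ contributes at least one neighbor. By \Cref{flipunique}, the edge $\tilde{f}$ produced is the unique such replacement, so the neighbor of $T$ along this flip is well-defined. Moreover, different choices of $\tilde{e}$ yield different neighbors: if $(\tilde{T}\setminus \tilde{e}_1) \cup \tilde{f}_1 = (\tilde{T}\setminus \tilde{e}_2) \cup \tilde{f}_2$ with $\tilde{e}_1 \neq \tilde{e}_2$, then $\tilde{e}_1$ would still lie in the latter triangulation (since it is in $\tilde T \setminus \tilde e_2$), so we would have $\tilde{f}_2 = \tilde{e}_1$, and symmetrically $\tilde{f}_1 = \tilde{e}_2$; but then these two flips produce the same new triangulation differing from $\tilde T$ in two edges, contradicting \Cref{flipunique} applied to either $\tilde e_1$ or $\tilde e_2$.

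Combining these observations, the map sending a $2$-relevant $n$-periodic edge $\tilde e \in \tilde T$ to the neighbor $(\tilde T \setminus \tilde e) \cup \tilde f$ is a bijection between the set of $2$-relevant edges of $T$ and the set of neighbors of $T$ in $G^n_{4n,2}$. Hence $\deg_{G^n_{4n,2}}(T) = 2(n-1)$, and since this holds for every $T$, the graph is regular of degree $2(n-1)$. The only step that carries any real content is the injectivity argument for neighbors, which I expect to be the mildest obstacle; everything else is a direct packaging of \Cref{Cnpurity}, \Cref{flipexist}, and \Cref{flipunique}.
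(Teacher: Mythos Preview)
Your proposal is correct and mirrors the paper's own proof, which likewise just packages \Cref{Cnpurity}, \Cref{flipexist}, and \Cref{flipunique} into the degree count. One small note: your injectivity step is tangled (the line ``so we would have $\tilde f_2=\tilde e_1$'' cannot hold, since $\tilde f_2\notin\tilde T$ while $\tilde e_1\in\tilde T$); the clean version is simply that any neighbor $T'$ satisfies $\tilde T\setminus T'=\{\tilde e\}$, so distinct $\tilde e$'s automatically give distinct neighbors.
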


\begin{proof}

The regularity of $G^n_{4n,2}$ follows from the count of $2$-relevant edges in \Cref{Cnpurity}, in addition to the fact that every $2$-relevant $n$-periodic edge can be flipped and this flip is unique as shown in \Cref{flipexist} and \Cref{flipunique}.

\end{proof}

We additionally define a simplicial complex $\Delta_{\calCn,k}$ whose vertices are $k$-relevant edges of $\calCn$ and whose facets correspond to $k$-triangulations on $\calCn.$ \Cref{maximalityfromstars} shows that the faces of $\Delta _{\calCn,2}$ are sets of $k$-relevant edges of $\calCn$ such that the lifting to $\ocalCn$ is $(k+1)$-crossing-free. The results of this paper additionally imply the following:

\begin{corollary}\label{cor:weak pseudomanifold}
The simplicial complex $\Delta _{\calCn,2}$ is pure of dimension $2(n-1)$; moreover, it is a weak pseudomanifold (without boundary).
\end{corollary}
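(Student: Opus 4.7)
The plan is to extract this corollary as a direct translation of the flip-graph regularity (\Cref{flipgraphthm}) into the language of simplicial complexes, combining the counting statement of \Cref{Cnpurity} with the flip existence and uniqueness results (\Cref{flipexist} and \Cref{flipunique}), plus the maximality statement in \Cref{maximalityfromstars}. No new technical ingredient should be needed; the work is all in properly unpacking definitions.

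First I would verify purity. By \Cref{Cnpurity}, every $2$-triangulation $T$ on $\calCn$ contains exactly $2(n-1)$ many $2$-relevant edges. Since the facets of $\Delta_{\calCn,2}$ correspond bijectively to $2$-triangulations on $\calCn$, and the vertex set of each facet is precisely the collection of $2$-relevant edges of the corresponding triangulation (the $2$-boundary edges and the unique edge of length $2n$ are common to all triangulations and are not vertices of $\Delta_{\calCn,2}$), every facet has exactly $2(n-1)$ vertices. By \Cref{maximalityfromstars}, every $3$-crossing-free collection of $2$-relevant edges extends to a $2$-triangulation, confirming that these are all the facets; hence $\Delta_{\calCn,2}$ is pure of dimension $2(n-1)$ in the paper's convention.

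Next I would verify the weak pseudomanifold condition, namely that every codimension-$1$ face lies in exactly two facets. A codimension-$1$ face has the form $T \setminus \{\tilde e\}$, where $T$ is a facet and $\tilde e$ is a $2$-relevant (periodic) edge of $T$, viewed on the $4n$-gon via the bijection of \Cref{thm:annular-triangulations-as-periodic}. By \Cref{flipexist}, there exists an $n$-periodic edge $\tilde f \notin T$ with $(T \setminus \{\tilde e\}) \cup \{\tilde f\}$ again a $2$-triangulation, providing a second facet containing $T \setminus \{\tilde e\}$. By \Cref{flipunique}, this is the unique such alternative, so $T \setminus \{\tilde e\}$ lies in exactly two facets. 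In particular no codimension-$1$ face is contained in only one facet, so $\Delta_{\calCn,2}$ is a weak pseudomanifold without boundary.

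The main substance of the argument has already been carried out in \Cref{flipexist}, \Cref{flipunique}, \Cref{maximalityfromstars}, and \Cref{Cnpurity}; consequently I do not anticipate any new obstacle for this corollary. The only mild care point is to check that passing between the $n$-periodic picture on the $4n$-gon and the simplicial complex $\Delta_{\calCn,2}$ on $\calCn$ is done compatibly, which is exactly the content of \Cref{thm:annular-triangulations-as-periodic} and its accompanying discussion of orbits.
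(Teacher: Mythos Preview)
Your proposal is correct and follows essentially the same route as the paper's proof: purity from the edge count in \Cref{Cnpurity}, and the weak pseudomanifold property from \Cref{flipexist} and \Cref{flipunique}. One factual slip to fix: your parenthetical claim that ``the unique edge of length $2n$ [is] common to all triangulations and [is] not a vertex of $\Delta_{\calCn,2}$'' is incorrect---by the paper's definition a $2$-relevant edge has length $\ell$ with $2<\ell\le 2n$, so the length-$2n$ edge \emph{is} $2$-relevant and hence a vertex of the complex, and different $2$-triangulations contain different length-$2n$ edges (there are $n$ of them on $\calCn$; \Cref{e!knedge} only says each triangulation contains exactly one). This does not affect your argument, since the count $2(n-1)$ from \Cref{Cnpurity} already includes that edge and \Cref{flipexist}/\Cref{flipunique} apply to it as well.
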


\begin{proof}
Purity of dimension $2(n-1)$ follows from the count of $2$-relevant edges in \Cref{Cnpurity}. As every $k$-relevant edge can be uniquely flipped as shown in \Cref{flipexist} and \Cref{flipunique}, each codimension $1$ face lies in exactly $2$ distinct facets, and thus $\Delta _{\calCn,2}$ is a weak pseudomanifold without boundary. 
\end{proof}

\begin{proof}[Proof of \Cref{thm:main}]
    This follows directly from \Cref{flipgraphthm} and \Cref{cor:weak pseudomanifold}.
\end{proof}

In \cite[Theorem~2.1]{STUMP20111794}, the simplicial complex $\Delta_{n,k},$ whose vertices are diagonals of the $n$-gon of length greater than $k$ and whose facets correspond to $k$-triangulations, is shown to be naturally isomorphic to a subword complex in type $A_{n-k-1}.$ 

In \cite[Theorem~2.10]{CeballosLabbeStump}, an analogous result is shown in type $B.$ In particular, the $B_{m-k}$ subword complex considered has vertices corresponding to a diagonal and its image under rotation by $180^\circ,$ and facets corresponding to $k$-triangulations of the $2m$-gon invariant under rotation by $180^\circ.$


We give an analogous result for the simplicial complex $\Delta_{\calCn,2}$ below. Let $Q$ be a word in the generators $S$ of a finite Coxeter group $W.$ For an element $\pi\in W,$ the subword complex $\mathcal{SC}(Q,\pi),$ which was introduced in \cite{KM05}, has faces given by subwords $P$ of $Q$ for which the complement $Q\setminus P$ contains a reduced expression of $\pi.$

\begin{proposition}\label{subwordcxunion}
    The simplicial complex $\Delta_{\calCn,2}$ is isomorphic to a union of subword complexes \[\bigcup_{w^2=w_0} \mathcal{SC}(c^{n},w),\] where $w_0$ is the longest element of the Coxeter group $B_{2n-2}$ and $c$ is the word $s_1,s_2\cdots, s_{2n-2}.$ 
\end{proposition}

\begin{proof}




As implied by \cite[Theorem~2.10]{CeballosLabbeStump}, centrally symmetric $2$-triangulations of the $4n$-gon are in bijection with subwords $P$ of the word $Q=c^{2n}$ such that $Q\setminus P$ is a reduced expression for $w_0,$ as is additionally evident from the chevron pipe dream model. Let $\sigma_i$ denote the $i$-th letter of $Q.$ From \Cref{periodicbijection} and this reading of the subword complex from the chevron pipe dream, we obtain that $n$-periodic $2$-triangulations of the $4n$-gon are in bijection with subwords $P$ of $Q$ representing $w_0$ which additionally satisfy $\sigma_i\in P\Longleftrightarrow\sigma_{n(2n-2)+i}\in P,$ for $i\in \{1,2,\dots, n(2n-2)\}.$  

By \cite[Proposition~4.4.4]{MR2133266} and \cite[Proposition~2.1~(ii)]{MR3474789}, for any element $w\in B_{2n-2}$ such that $w^2=w_0,$ the Coxeter length $\ell(w)$ is equal to $\ell(w_0)/2$. For this reason, if we write any reduced expression of $w$ twice consecutively, it would be some expression for $w_0$, and no two $w, w'$ yield the same expression for $w_0$ through this process. Therefore, there is a bijection between the reduced expressions of $w_0$ satisfying $\sigma_i\in P\Longleftrightarrow\sigma_{n(2n-2)+i}\in P,$ for $i\in \{1,2,\dots, n(2n-2)\}$ and the reduced expressions of $w$ such that $w^2=w_0$. 

As such, we additionally have that $n$-periodic $2$-triangulations of the $n$-gon are in bijection with subwords of $c^{n}$ representing an element $w$ such that $w^2=w_0,$ naturally identifies the facets of $\Delta_{\calCn,2}$ with the facets of $\cup_{w^2=w_0} \mathcal{SC}(c^{n},w).$


\end{proof}

Assuming \Cref{conj:bijection-for-general-k}, a similar proof yields the following bijection for arbitrary $k$:

\begin{conjecture}
    The simplicial complex $\Delta_{C_n,k}$ is isomorphic to a union of subword complexes \[\bigcup_{w \in \mathcal A_{k}}\mathcal{SC}(c^n,w),\] where $w_0$ is the longest element of the Coxeter group $B_{nk-k},$ $c$ is the word $s_1,s_2,\dots ,s_{nk-k},$ and $\mathcal A_w:=\{w\mid w^k=w_0,\ell(w)=\ell(w_0)/k\}\subset B_{kn-k}.$ 
\end{conjecture}
Note that when $k\geq3,$ for $w$ satisfying $w^k=w_0$, it is not automatically true that $\ell(w)=\ell(w_0)/k.$

Regarding the number of vertices in the flip graph, which is equivalent to the number of $k$-triangulations on $\calCn$, or the number of facets in $\Delta _{\calCn,k}$, we have the following conjecture.

\begin{conjecture}
    The simplicial complex $\Delta_{\calCn,k}$ has ${2(n-1)\choose (n-1)}^k$ facets.
\end{conjecture}

In \cite[Chapter 6]{Jahn}, a specific class of unions of subword complexes, called \emph{glued subword complexes}, are shown to be vertex-decomposable spheres. As we have shown that $\Delta _{\calCn,2}$ is a weak pseudomanifold -- analogous to the result for $n$-gons \cite[Theorem~1]{Jonssonfake}, we make the following conjecture.

\begin{conjecture}\label{conj:spherical 2}
The simplicial complex $\Delta _{\calCn,2}$ is a piecewise linear sphere.
\end{conjecture}

We expect the conjecture to hold for arbitrary $k.$ 

\begin{conjecture}\label{conj:spherical k}
The simplicial complex $\Delta _{\calCn,k}$ is a piecewise linear sphere.
\end{conjecture}

By \cite[Proposition~2.3]{MR2448067}, the only surfaces whose multitriangulation complexes are finite (that is, contain finitely many simplices) are polygons and half‑cylinders. Hence, an affirmative answer to \Cref{conj:spherical k}, together with \cite[Theorem~1]{Jonssonfake}, would imply that all finite multitriangulation complexes are piecewise linear spheres.





\section*{Acknowledgements}

This project was done as part of the University of Minnesota School of Mathematics Summer 2024 REU program, and was supported by NSF grant DMS-2053288 and the D.E. Shaw group. Saskia Solotko was supported by the Tufts Summer Scholars program and Katherine Tung was supported by the Harvard College Research Program for work on the project after the REU ended. We thank our mentor Pavlo Pylyavskyy and TA Joe McDonough for their guidance. We thank Ilani Axelrod-Freed, Kieran Favazza, Sergey Fomin, Carl Lian, Molly MacDonald, Vincent Pilaud, Victor Reiner, George Seelinger, and Roberta Shapiro for helpful discussions. We also thank Ayah Almousa and Victor Reiner for organizing this wonderful REU experience. 

\printbibliography

\end{document}